\documentclass[12pt,twoside]{amsart}
\usepackage{amssymb,amsmath,amsfonts,amsthm}
\usepackage{mathrsfs}
\usepackage[X2,T1]{fontenc}
\usepackage[applemac]{inputenc}
\usepackage{cite}
\usepackage{latexsym}
\usepackage{verbatim}
\usepackage{soul}
\usepackage[dvipsnames]{xcolor}
\usepackage{hyperref}
\setlength{\oddsidemargin}{-5mm}
\setlength{\evensidemargin}{-5mm}
\setlength{\textwidth}{170mm}
\setlength{\textheight}{240mm}
\setlength{\topmargin}{-2mm}

\def\supp{\mathop{\rm supp}\nolimits}

\def\supp{\mathop{\rm supp}\nolimits}

\def\R{\mathbb R}

\def\N{\mathbb N}

\newcommand\dslash{d\llap {\raisebox{.9ex}{$\scriptstyle-\!$}}}
\usepackage{colortbl}

\newcommand{\beqsn}{\arraycolsep1.5pt\begin{eqnarray*}}
	\newcommand{\eeqsn}{\end{eqnarray*}\arraycolsep5pt}
\newcommand{\beqs}{\arraycolsep1.5pt\begin{eqnarray}}
	\newcommand{\eeqs}{\end{eqnarray}\arraycolsep5pt}

\newtheorem{theorem}{Theorem}
\newtheorem{lemma}{Lemma}

\newtheorem{proposition}{Proposition}
\newtheorem{definition}{Definition}

\newtheorem{remark}{Remark}

\catcode`\@=11

\renewcommand{\section}%
{\setcounter{equation}{0}\@startsection {section}{1}{\z@}{-3.5ex plus -1ex
		minus -.2ex}{2.3ex plus .2ex}{\Large\bf}}

\title[ ]{Some remarks on the Cauchy Problem for Schr\"odinger type equations in Gelfand-Shilov spaces}

\author[A. Arias Junior]{Alexandre Arias Junior}
\address{Dipartimento di Matematica ``G. Peano'' \\Universit\`a di Torino\\
	Via Carlo Alberto 10\\
	10123 Torino\\
	Italy}
\email{alexandre.ariasjunior@unito.it}

\begin{document}
	
	
	\begin{abstract}
	We consider the Cauchy problem for Schr\"odinger type operators. Under a suitable decay assumption on the imaginary part of the first order coefficients we prove well-posedness of the Cauchy problem in Gelfand-Shilov classes. We also discuss the optimality of our result through some examples.
	\end{abstract}
	
	\maketitle
	\noindent  \textit{2010 Mathematics Subject Classification}: 35G10, 35S05, 35B65, 46F05 \\
	\noindent
	\textit{Keywords and phrases}: Schr\"odinger type equations, Gevrey spaces, Gelfand-Shilov spaces, well-posedness
	
	\section{Introduction and results}
	
	We consider the Cauchy problem 
	\begin{equation}\label{CP}
		\begin{cases}
			S u(t,x) = f(t,x), \quad t \in [0,T], x \in \R^{n}, \\
			u(0,x) = g(x), \quad \quad \, x \in \R^{n},
		\end{cases}
	\end{equation}
	for the operator 
	$$
	S = D_t - \Delta_x + \sum_{j=0}^{n} a_j(t,x)D_{x_j} + b(t,x), \quad t \in [0,T], x \in \R^{n},
	$$
	where $a_j(t,x) \in C([0,T];\mathcal{B}^{\infty}(\R^{n}))$, $j=1,\ldots,n$, and $\mathcal{B}^{\infty}(\R^{n})$ denotes the space of all complex-valued smooth functions which are bounded in $\R^{n}$ together with all their derivatives. The operator $S$ is the typical example of non-kowalewskian operator which is not parabolic and it is known in the literature as Schr\"odinger type operator. 
	
	It is well known that if all the coefficients $a_j$ are real-valued, then the Cauchy problem \eqref{CP} is well-posed in $L^{2}(\R^{n})$ and in $H^{m}(\R^{n})$ spaces, where $H^{m}(\R^{n})$ stands for the standard $L^2-$based Sobolev spaces. On the other hand, if some coefficient $a_j$ has a non-identically null imaginary part, then $L^{2}(\R^{n})$ or $H^{\infty}(\R^{n}) := \cap_{m \in \R} H^{m} (\R^{n})$ well-posedness may fail. Indeed, when all $a_j(t,x) = a_j(x)$, in \cite{ichinose_remarks_cauchy_problem_schrodinger_necessary_condition} we find that the inequality 
	\begin{equation}\label{eq_necessary_cond}
		\sup_{x \in \R^{n}, \omega \in S^{n-1}} \left| \int_{0}^{\rho} \sum_{j=1}^{n} Im\, a_j(x+\omega\theta) \omega_j d\theta  \right| \leq Mlog(1+\rho) + N, \quad \forall\, \rho \geq 0, 
	\end{equation}
	is a necessary condition for well-posedness in $H^{\infty}(\R^{n})$. Setting $M = 0$ in \eqref{eq_necessary_cond} we obtain a necessary condition for well-posedness in $L^2(\R^{n})$, see \cite{mizohata2014}.
	
	\begin{remark}
		When $n = 1$ the inequality \eqref{eq_necessary_cond} turns out to be also a sufficient condition for $H^{\infty}$ well-posedness, see \cite{Ichinose_sufficient}. Besides, if $M = 0$ and $n = 1$, then \eqref{eq_necessary_cond} is a sufficient condition for $L^2$ well-posedness, see \cite{mizohata2014}.
	\end{remark}
	
	Imposing the following decay condition on the imaginary parts of the coefficients $a_j$
	\begin{equation}\label{eq_decay_assumption}
		|Im\,a_j(t,x)| \leq C \langle x \rangle^{-\sigma}, \, j = 1, \ldots, n, \quad \text{where} \,\, \langle x \rangle^{2} := 1+|x|^{2},
	\end{equation}
	for some $\sigma \in (0,1)$ and $C > 0$, from the aforementioned necessary condition neither $L^2$ nor $H^{\infty}$ well-posedness for the problem \eqref{CP} holds. In fact, this type of decay assumption leads to investigate the problem in suitable Gevrey classes of functions.
	
	For a given $\theta > 1$ we shall denote by $G^{\theta}(\R^{n})$ the set of all smooth functions $f(x)$ such that 
	$$
	\sup_{x \in \R^{n}, \beta \in \N^{n}_{0}} |\partial^{\beta}_{x} f(x)| h^{-|\beta|} \beta!^{-\theta} < \infty,
	$$
	for some $h > 0$. The space $G^{\theta}(\R^{n})$ is commonly referred as the space of uniform Gevrey functions of order $\theta$. Next, for $\rho > 0$ we consider 
	$$
	H^{0}_{\rho;\theta}(\R^{n}) := \{ u \in \mathscr{S}'(\R^{n}) : e^{\rho \langle D \rangle^{\frac{1}{\theta}}} u \in L^{2}(\R^{n}) \},
	$$
	where $\mathscr{S}'(\R^{n})$ stands for the space of tempered distributions and $e^{\rho \langle D \rangle^{\frac{1}{\theta}}}$ denotes the Fourier multiplier with symbol $e^{\rho\langle \xi \rangle^\frac{1}{\theta}}$.
	The space $H^{0}_{\rho;\theta}(\R^{n})$ endowed with the inner product
	$$ 
	\langle u , v \rangle_{H^{0}_{\rho;\theta}} := \langle e^{\rho \langle D \rangle^{\frac{1}{\theta}}} u, e^{\rho \langle D \rangle^{\frac{1}{\theta}}} v \rangle_{L^{2}}, \quad u, v \in H^{0}_{\rho;\theta}(\R^{n}),
	$$
	is a Hilbert space and we denote the induced norm by $\|\cdot\|_{H^{0}_{\rho;\theta}}$. We obviously have $H^{0}_{\rho;\theta}(\R^{n}) \subset H^{\infty}(\R^{n})$ and it is possible to prove that $H^{0}_{\rho;\theta}(\R^{n}) \subset G^{\theta}(\R^{n})$. The functional setting where the Gevrey well-posedness results take place is given by 
	$$
	H^{\infty}_{\theta}(\R^{n}) = \bigcup_{\rho > 0} H^{0}_{\rho;\theta}(\R^{n}).
	$$
	
	\begin{definition}[$H^{\infty}_{\theta}$ well-posedness]
		We say that the Cauchy problem \eqref{CP} is $H^{\infty}_{\theta}$ well-posed when for any given $\rho > 0$ there exist $\tilde{\rho} > 0$ and a constant $C := C(\rho,T) > 0$ such that for all $f \in C([0,T];H^{0}_{\rho;\theta}(\R^{n}))$ and $g \in H^{0}_{\rho;\theta}(\R^{n})$ there exists a unique solution $u \in C^{1}([0,T];H^{0}_{\tilde{\rho};\theta}(\R^{n}))$ and the following energy inequality holds
		\begin{equation}\label{eq_energy_estimate_Gevrey}
			\| u(t,\cdot)\|^{2}_{H^{0}_{\tilde{\rho};\theta}} \leq C \left\{ \|g\|^{2}_{H^{0}_{\rho;\theta}} + \int_{0}^{t} \|f(\tau,\cdot)\|^{2}_{H^{0}_{\rho;\theta}} d\tau \right\}.
		\end{equation}
	\end{definition}
	
	In \cite{KB} (see Theorem $1.1$) the authors proved the following result. 
	
	\begin{theorem}\label{KB_Gevrey_theorem}
		Assume that the coefficients $a_j$ and $b$ satisfy
		$$
		|\partial^{\beta}_{x} a_j(t,x)| + |\partial^{\beta}_{x}b(t,x)| \leq C A^{|\beta|} \beta!^{\theta_{0}}, \quad t \in [0,T], x \in \R^{n}, \beta \in \N_{0}^{n},
		$$
		for some constants $C, A > 0$ and for some $\theta_0 > 1$. Assume moreover that condition \eqref{eq_decay_assumption} is fulfilled for some $\sigma \in (0,1)$. Then, if $\theta \in [\theta_0, \frac{1}{1-\sigma})$ the Cauchy problem \eqref{CP} is $H^{\infty}_{\theta}$ well-posed.
	\end{theorem}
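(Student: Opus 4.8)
I would establish the energy inequality \eqref{eq_energy_estimate_Gevrey} by the energy method, after conjugating $S$ with a suitable family of invertible operators that removes the contribution of the imaginary parts of the coefficients, and then deduce existence and uniqueness in the usual way. Write $a_j=\Re a_j+i\,\Im a_j$. The real parts are harmless: $\Re a_j(t,x)D_{x_j}$ is self-adjoint modulo a bounded operator, so in the basic $L^2$ energy identity $\frac{d}{dt}\|v(t)\|_{L^2}^2=2\,\Im\langle Q(t)v,v\rangle+2\,\Re\langle i\tilde f,v\rangle$ — where $Q(t)$ denotes the spatial operator in the equation satisfied by $v:=e^{\rho(t)\langle D\rangle^{1/\theta}}u$ — their contribution, and that of $b$, is $O(\|v\|_{L^2}^2)$. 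The dangerous term is the self-adjoint part of $\sum_j\Im a_j(t,x)D_{x_j}$: it is of order one with real principal symbol $\sum_j\Im a_j(t,x)\xi_j$, and is precisely what makes $L^2$ and $H^\infty$ well-posedness fail. To neutralize it I would conjugate $S$ by $e^{\lambda(t,x,D)}$, with $\lambda$ a real-valued symbol, continuous in $t$ (after a harmless regularization of the coefficients in $t$ preserving \eqref{eq_decay_assumption} and the Gevrey bounds), solving the transport equation
\begin{equation*}
	\xi\cdot\nabla_x\lambda(t,x,\xi)-\tfrac12\sum_{j=1}^{n}\Im a_j(t,x)\,\xi_j=r(t,x,\xi)
\end{equation*}
with $r$ of lower order, so that the term $2\,\xi\cdot\nabla_x\lambda$ produced by conjugating $-\Delta$ cancels the dangerous term up to $r(t,x,D)$. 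Integrating $\Im a_j$ along the rays $x\mapsto x\pm s\,\xi/|\xi|$ and using \eqref{eq_decay_assumption} with $\sigma\in(0,1)$, such a $\lambda$ exists, grows at most like $\langle x\rangle^{1-\sigma}$, and belongs to the SG-type symbol calculus of Gevrey order $\theta$ precisely because $\theta\ge\theta_0$ (so that the coefficients themselves, only Gevrey of order $\theta_0$, lie in that calculus).

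The crucial point is that $e^{\lambda(t,x,D)}$ is an isomorphism on the relevant scale of weighted spaces, with quantitatively controlled norms, and this is exactly where $\theta<\tfrac1{1-\sigma}$, i.e. $1-\sigma<\tfrac1\theta$, is needed: the $\langle x\rangle^{1-\sigma}$ growth of $\lambda$ — which is closely related to the quantity appearing on the left of the necessary condition \eqref{eq_necessary_cond} — must be dominated by the Gevrey regularity $\langle\xi\rangle^{1/\theta}$. Having removed the dangerous term, I would also conjugate by the time-dependent weight $e^{\rho(t)\langle D\rangle^{1/\theta}}$ with $\rho(t)$ strictly decreasing on $[0,T]$, $\rho(0)=\rho$, $\rho(T)=\tilde\rho>0$: differentiating in $t$ makes it produce in $Q(t)$ the self-adjoint \emph{nonpositive} operator $\dot\rho(t)\langle D\rangle^{1/\theta}$, of order $1/\theta$, which, for a sufficiently fast decay of $\rho(t)$ and after a sharp G\r{a}rding type lower bound in the Gevrey calculus, absorbs all the leftover terms: the commutators $[e^{\rho(t)\langle D\rangle^{1/\theta}},a_j]e^{-\rho(t)\langle D\rangle^{1/\theta}}D_{x_j}$ (again of order $1/\theta$), and — since $1-\sigma<\tfrac1\theta$ — the remainder of the transport equation and the contribution of $\partial_t\lambda$, both of order at most $1-\sigma$. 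One is then left, for the conjugated unknown $w:=e^{\lambda(t,x,D)}e^{\rho(t)\langle D\rangle^{1/\theta}}u$, with $\frac{d}{dt}\|w(t)\|_{L^2}^2\le C\|w(t)\|_{L^2}^2+C\|f(t)\|_{H^0_{\rho;\theta}}^2$, and Gronwall's inequality, combined with the norm comparisons furnished by the isomorphism property and by $\|u(t)\|_{H^0_{\tilde\rho;\theta}}\le\|e^{\rho(t)\langle D\rangle^{1/\theta}}u(t)\|_{L^2}$, yields \eqref{eq_energy_estimate_Gevrey}. Uniqueness is immediate from this estimate, which holds just as well, backward in time, for the formal adjoint $S^*$ (of the same type, since $\Im\bar a_j=-\Im a_j$ still satisfies \eqref{eq_decay_assumption}); the a priori estimate for $S^*$ then gives, by the standard duality argument, the existence of $u$, and the $C^{1}$ regularity in $t$ follows by reading $D_t u$ off the equation, after possibly decreasing $\tilde\rho$.

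The heart of the proof — and what I expect to be the main obstacle — is the construction of $\lambda$ with simultaneous control of its $x$-growth, its $\xi$-behavior and its Gevrey seminorms, together with the $L^2$-boundedness and invertibility of $e^{\lambda(t,x,D)}$ on the appropriate spaces (where the trade-off $\langle x\rangle^{1-\sigma}\leftrightarrow\langle\xi\rangle^{1/\theta}$, admissible precisely for $\theta<\tfrac1{1-\sigma}$, is exploited), and the sharp G\r{a}rding inequality used to absorb the lower-order remainders into $\dot\rho(t)\langle D\rangle^{1/\theta}$; all of this relies on a sufficiently robust pseudodifferential calculus in the Gevrey--Gelfand--Shilov symbol classes of the paper.
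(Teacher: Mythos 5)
First, a remark on the point of comparison: the paper itself does not prove this statement at all --- it is quoted verbatim as Theorem 1.1 of Kajitani--Baba \cite{KB} and then used as a black box in the proof of Theorem \ref{main_theorem} (the paper's own contribution starts from the conjugation by $e^{\delta\langle x\rangle^{1/s}}$, which reduces the Gelfand--Shilov problem to this Gevrey result). So your proposal has to be measured against the proof in \cite{KB} and its descendants, and in spirit it does reconstruct that strategy: a conjugation by $e^{\lambda(t,x,D)}$, with $\lambda$ built from integrals of $\Im a_j$ along rays, to neutralize the non-self-adjoint first-order part; a second conjugation by $e^{\rho(t)\langle D\rangle^{1/\theta}}$ with $\rho(t)$ decreasing, so that $\dot\rho(t)\langle D\rangle^{1/\theta}$ absorbs remainders of order $1/\theta$ after a sharp G\aa rding inequality; Gronwall; and an adjoint/duality argument for existence. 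That is the correct family of arguments.

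However, the step you yourself single out as the heart of the proof contains a genuine gap as formulated. A symbol $\lambda$ growing like $\langle x\rangle^{1-\sigma}$ uniformly in $\xi$ cannot give an operator $e^{\lambda(t,x,D)}$ that is bounded, let alone an isomorphism, on the scale $H^{0}_{\rho;\theta}$: these spaces carry only the Fourier-side weight $e^{\rho\langle\xi\rangle^{1/\theta}}$, and since $x$ and $\xi$ are independent there is no domination of $e^{M\langle x\rangle^{1-\sigma}}$ by $e^{\varepsilon\langle\xi\rangle^{1/\theta}}$ in the region $\langle x\rangle^{1-\sigma}\gg\langle\xi\rangle^{1/\theta}$, so the ``trade-off'' you invoke is not available pointwise. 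The mechanism that actually works (and is what makes $\theta<\frac{1}{1-\sigma}$ enter quantitatively) is a balanced splitting: one integrates $\langle y\rangle^{-\sigma}$ only up to $\langle y\rangle\sim\langle\xi\rangle^{\frac{1}{\theta(1-\sigma)}}$, so that $|\lambda(x,\xi)|\lesssim\langle\xi\rangle^{1/\theta}$ and $e^{\lambda}$ maps $H^{0}_{\rho;\theta}$ into some $H^{0}_{\rho';\theta}$ and is invertible modulo such maps, while in the outer region $\langle x\rangle\gtrsim\langle\xi\rangle^{\frac{1}{\theta(1-\sigma)}}$ the dangerous symbol $|\Im a_j(t,x)\,\xi_j|\lesssim\langle x\rangle^{-\sigma}|\xi|$ is itself of order $\langle\xi\rangle^{1/\theta}$ precisely because $1-\frac{\sigma}{\theta(1-\sigma)}\le\frac{1}{\theta}\iff\theta\le\frac{1}{1-\sigma}$, and is absorbed directly by $\dot\rho(t)\langle D\rangle^{1/\theta}$. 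Without this cutoff your transport equation and the claimed isomorphism property are incompatible, and the hypothesis $\theta<\frac{1}{1-\sigma}$ is never genuinely used. The remaining ingredients (Gevrey pseudodifferential calculus for $e^{\lambda}$, sharp G\aa rding in that calculus, handling of the $\theta_0$-Gevrey coefficients, duality for existence) are standard but are asserted rather than carried out; as a roadmap that is acceptable, but the balancing construction above is the one step that cannot be waved through.
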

	
	In the limit case $\theta = \frac{1}{1-\sigma}$ local in time $H^{\infty}_{\theta}$ well-posedness for \eqref{CP} holds. We point out that the upper bound $\frac{1}{1-\sigma}$ for the indices $\theta$ in the above theorem is sharp. Indeed, if we consider the model operator in one space dimension 
	\begin{equation}\label{eq_model_opeartor}
			M = D_t + D^{2}_{x} + i\langle x \rangle^{-\sigma}D_{x},		
	\end{equation}
	then Theorem $2$ of \cite{dreher} implies $H^{\infty}_{\theta}$ ill-posedness for the Cauchy problem associated with \eqref{eq_model_opeartor} when $\theta > \frac{1}{1-\sigma}$.
	
	\medskip
	
	In this paper we are interested in the well-posedness of \eqref{CP} in the so called Gelfand-Shilov spaces. Theses spaces were introduced by I.M. Gelfand and G.E. Shilov in \cite{GS2} making part of a larger class called spaces of type $\mathcal{S}$. More into the point, for $\theta > 1$ and $s \geq 1$ we denote by $\mathcal{S}^{\theta}_{s}(\R^{n})$ the space of all smooth functions $f(x)$ such that 
	$$
	\sup_{x\in \R^{n}, \beta \in \N^{n}_{0}} |\partial^{\beta}_{x} f(x)| C^{-|\beta|} \beta!^{-\theta}e^{c|x|^{\frac{1}{s}}} < \infty, 
	$$
	for some $C, c >0$. So, the space $\mathcal{S}^{\theta}_{s}(\R^{n})$ can be viewed as the space of all uniform Gevrey functions of order $\theta$ which, together with all their derivatives, decay as $e^{-c|x|^{\frac{1}{s}}}$ at infinity. 
	
	It is convenient to describe the Gelfand-Shilov classes in terms of a suitable scale of weighted Sobolev spaces. For $\rho = (\rho_1, \rho_2)$, $\rho_1 > 0$, $\rho_2 > 0$ we consider
	$$
	H^{0}_{\rho;\theta,s}(\R^{n}) = \{ u \in \mathscr{S}'(\R^{n}): e^{\rho_2\langle x \rangle^{\frac{1}{s}}} e^{\rho_1 \langle D \rangle^{\frac{1}{\theta}}} u \in L^{2}(\R^{n}) \}.
	$$
	The space $H^{0}_{\rho;\theta,s}(\R^{n})$ is known as Gelfand-Shilov Sobolev space and defines a Hilbert space when endowed with the following inner product
	$$ 
	\langle u , v \rangle_{H^{0}_{\rho;\theta,s}} := \langle e^{\rho_2\langle x \rangle^{\frac{1}{s}}} e^{\rho_1 \langle D \rangle^{\frac{1}{\theta}}} u, e^{\rho_2\langle x \rangle^{\frac{1}{s}}} e^{\rho_1 \langle D \rangle^{\frac{1}{\theta}}} v \rangle_{L^{2}}, \quad u, v \in H^{0}_{\rho;\theta,s}(\R^{n}),
	$$
	and the induced norm is denoted by $\|\cdot\|_{H^{0}_{\rho;\theta,s}}$. Then we express the Gelfand-Shilov classes as
	$$
	\mathcal{S}^{\theta}_{s}(\R^{n}) = \bigcup_{\rho_1, \rho_2 > 0} H^{0}_{\rho;\theta,s}(\R^{n}).
	$$
	
	Now we precisely define what we mean by $\mathcal{S}^{\theta}_{s}$ well-posedness. 
	
	\begin{definition}[$\mathcal{S}^{\theta}_{s}$ well-posedness]
		We say that the Cauchy problem \eqref{CP} is $\mathcal{S}^{\theta}_{s}$ well-posed when for any given $\rho = (\rho_1, \rho_2), \rho_1, \rho_2 > 0$ there exist $\tilde{\rho} = (\tilde{\rho}_1, \tilde{\rho}_2), \tilde{\rho}_1, \tilde{\rho}_2 > 0$ and a constant $C := C(\rho,T) > 0$ such that for all $f \in C([0,T];H^{0}_{\rho;\theta,s}(\R^{n}))$ and $g \in H^{0}_{\rho;\theta,s}(\R^{n})$ there exists a unique solution $u \in C^{1}([0,T];H^{0}_{\tilde{\rho};\theta,s}(\R^{n}))$ and the following energy inequality holds 
		\begin{equation}\label{eq_energy_estimate_Gelfand_shilov}
			\| u(t,\cdot)\|^{2}_{H^{0}_{\tilde{\rho};\theta,s}} \leq C \left\{ \|g\|^{2}_{H^{0}_{\rho;\theta,s}} + \int_{0}^{t} \|f(\tau,\cdot)\|^{2}_{H^{0}_{\rho;\theta,s}} d\tau \right\}.
		\end{equation}
	\end{definition}
	
	The main result of this manuscript reads as follows.
	
	\begin{theorem}\label{main_theorem}
		Assume that the coefficients $a_j$ and $b$ satisfy
		$$
		|\partial^{\beta}_{x} a_j(t,x)| + |\partial^{\beta}_{x}b(t,x)| \leq C A^{|\beta|} \beta!^{\theta_{0}}, \quad t \in [0,T], x \in \R^{n}, \beta \in \N_{0}^{n},
		$$
		for some constants $C, A > 0$ and for some $\theta_0 > 1$. Assume moreover that condition \eqref{eq_decay_assumption} is fulfilled for some $\sigma \in (0,1)$. Then, if $\theta \in [\theta_0, \min\{\frac{1}{1-\sigma}, s\})$ the Cauchy problem \eqref{CP} is $\mathcal{S}^{\theta}_{s}$ well-posed.
	\end{theorem}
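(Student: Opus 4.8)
\noindent\textit{Proof proposal.} The plan is to reduce Theorem~\ref{main_theorem} to the Gevrey result in Theorem~\ref{KB_Gevrey_theorem} by conjugating $S$ with the spatial exponential weight and observing that the resulting operator still obeys the hypotheses of that theorem, albeit with a smaller but still admissible decay exponent. Fix $\rho=(\rho_1,\rho_2)$ as in the definition of $\mathcal{S}^{\theta}_{s}$ well-posedness and choose $\delta=\rho_2/2$. Setting $v=e^{\delta\langle x\rangle^{1/s}}u$, $F=e^{\delta\langle x\rangle^{1/s}}f$, $G=e^{\delta\langle x\rangle^{1/s}}g$, and using that $e^{\delta\langle x\rangle^{1/s}}D_{x_j}e^{-\delta\langle x\rangle^{1/s}}=D_{x_j}+i\delta\,\partial_{x_j}\langle x\rangle^{1/s}$, a direct computation turns \eqref{CP} into the Cauchy problem $S_\delta v=F$, $v(0,\cdot)=G$, where
\[
S_\delta:=e^{\delta\langle x\rangle^{1/s}}\,S\,e^{-\delta\langle x\rangle^{1/s}}=D_t-\Delta_x+\sum_{j=1}^{n}\tilde a_j(t,x)D_{x_j}+\tilde b(t,x),
\]
with $\tilde a_j=a_j+2i\delta\,\partial_{x_j}\langle x\rangle^{1/s}$ and $\tilde b=b+\delta\,\Delta_x\langle x\rangle^{1/s}-\delta^{2}|\nabla_x\langle x\rangle^{1/s}|^{2}+i\delta\sum_{j=1}^{n}a_j\,\partial_{x_j}\langle x\rangle^{1/s}$.

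Next I would check that $S_\delta$ is covered by Theorem~\ref{KB_Gevrey_theorem}. Since $\langle x\rangle^{1/s}$ is an analytic symbol of order $1/s\le1$, that is $|\partial^\beta_x\langle x\rangle^{1/s}|\le CA^{|\beta|}\beta!\langle x\rangle^{1/s-|\beta|}$, all the terms entering $\tilde a_j$ and $\tilde b$ lie in $\mathcal{B}^{\infty}(\R^n)$ together with all their derivatives, and the Leibniz rule shows that $\tilde a_j,\tilde b$ satisfy the same Gevrey-$\theta_0$ estimates as $a_j,b$. Moreover $|\partial_{x_j}\langle x\rangle^{1/s}|\le \tfrac1s\langle x\rangle^{1/s-1}$, so \eqref{eq_decay_assumption} gives $|\Im \tilde a_j(t,x)|\le C'\langle x\rangle^{-\sigma'}$ with $\sigma':=\min\{\sigma,\,1-\tfrac1s\}$. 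As the admissible interval $[\theta_0,\min\{\tfrac{1}{1-\sigma},s\})$ is nonempty (otherwise there is nothing to prove) we have $s>\theta_0>1$, hence $\sigma'\in(0,1)$, and $\tfrac{1}{1-\sigma'}=\min\{\tfrac{1}{1-\sigma},s\}$; thus $\theta\in[\theta_0,\tfrac{1}{1-\sigma'})$ and Theorem~\ref{KB_Gevrey_theorem} yields $H^{\infty}_{\theta}$ well-posedness for $S_\delta$. This is exactly where the threshold $s$ enters the admissible range: it is the decay rate $1-\tfrac1s$ of the first-order term created by conjugating $-\Delta_x$ with $e^{\delta\langle x\rangle^{1/s}}$.

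It then remains to convert these estimates into $\mathcal{S}^{\theta}_{s}$ estimates for $u=e^{-\delta\langle x\rangle^{1/s}}v$. For this one needs the quantitative interchangeability of the two exponential weights: for a suitable bounded loss of the parameters, $e^{\rho_2\langle x\rangle^{1/s}}e^{\rho_1\langle D\rangle^{1/\theta}}h\in L^{2}$ implies $e^{\rho_1^{\sharp}\langle D\rangle^{1/\theta}}e^{\delta\langle x\rangle^{1/s}}h\in L^{2}$, and conversely, for every $r_1>0$ there are $r_1^{\flat},r_2^{\flat}>0$ such that $e^{r_1\langle D\rangle^{1/\theta}}h\in L^{2}$ implies $e^{r_2^{\flat}\langle x\rangle^{1/s}}e^{r_1^{\flat}\langle D\rangle^{1/\theta}}e^{-\delta\langle x\rangle^{1/s}}h\in L^{2}$, with comparable norms. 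Granting this, $g\in H^{0}_{\rho;\theta,s}$ and $f\in C([0,T];H^{0}_{\rho;\theta,s})$ force $G,F$ into a Gevrey scale, Theorem~\ref{KB_Gevrey_theorem} produces the unique $v\in C^{1}([0,T];H^{0}_{\tilde\rho_1;\theta})$ obeying \eqref{eq_energy_estimate_Gevrey}, and undoing the conjugation gives $u\in C^{1}([0,T];H^{0}_{\tilde\rho;\theta,s})$ with $\tilde\rho=(r_1^{\flat},r_2^{\flat})$, satisfying \eqref{eq_energy_estimate_Gelfand_shilov}; uniqueness transfers by applying the bijection $h\mapsto e^{\delta'\langle x\rangle^{1/s}}h$ (with $\delta'$ small) to a would-be difference of two solutions, whose image solves the homogeneous problem for $S_{\delta'}$ in a Gevrey scale.

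The hard part is precisely the weight-interchange step, equivalently the fact that the two conjugations, by $e^{\delta\langle x\rangle^{1/s}}$ and by $e^{\rho_1\langle D\rangle^{1/\theta}}$, can be performed inside one well-behaved symbolic calculus; for instance one must recognize $e^{\rho_1\langle D\rangle^{1/\theta}}e^{\delta\langle x\rangle^{1/s}}e^{-\rho_1\langle D\rangle^{1/\theta}}$ as a pseudodifferential operator comparable with $e^{\delta'\langle x\rangle^{1/s}}$, which calls for a calculus of global, SG-type symbols with Gelfand-Shilov regularity, namely the machinery developed in the body of the paper. An alternative, probably closer to the actual argument, is to skip the reduction and conjugate $S$ directly by a single exponential pseudodifferential operator $e^{\Lambda(t,x,D)}$ with $\Lambda=k_1\langle\xi\rangle^{1/\theta}+k_2\langle x\rangle^{1/s}+\lambda(t,x,\xi)$, the correction $\lambda$ being chosen to absorb the loss coming from $\Im a_j$, so that the conjugated operator is $D_t-\Delta_x$ up to a bounded remainder, yielding \eqref{eq_energy_estimate_Gelfand_shilov} directly. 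In this route the restrictions $\theta<\tfrac{1}{1-\sigma}$ and $\theta<s$ both emerge from requiring the correction symbol (of strength $1-\sigma$) and the symbol generated by the weight $\langle x\rangle^{1/s}$ (of strength $1-\tfrac1s$), measured against the regularizing power $\langle\xi\rangle^{1/\theta}$, to belong to admissible classes.
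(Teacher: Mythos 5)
Your proposal is correct and follows essentially the same route as the paper: conjugation of $S$ by $e^{\delta\langle x\rangle^{1/s}}$ with $\delta<\rho_2$, the observation that the conjugated operator keeps Gevrey-$\theta_0$ coefficients while the imaginary parts of the first-order terms decay like $\langle x\rangle^{-\min\{\sigma,1-\frac1s\}}$ so that Theorem \ref{KB_Gevrey_theorem} applies exactly for $\theta<\min\{\frac{1}{1-\sigma},s\}$, and then undoing the conjugation, with uniqueness transferred the same way. The only difference is in the last step: the paper does not use any SG-type weight-exchange calculus nor the alternative conjugation by $e^{\Lambda(t,x,D)}$ you sketch, but simply invokes the mapping properties $e^{\delta\langle x\rangle^{1/s}}:H^{0}_{(\rho_1,\rho_2);\theta,s}\to H^{0}_{(\rho_1,\rho_2-\delta);\theta,s}\subset H^{0}_{\rho_1;\theta}$ and $e^{-\delta\langle x\rangle^{1/s}}:H^{0}_{\tilde\rho_1;\theta}\to H^{0}_{(\tilde\rho_1,\delta);\theta,s}$ (stated without further proof), which is exactly the weight-interchange fact you flag as "granting this."
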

	
	\begin{remark}\label{remark_limit_case}
		In the limit case $\theta = \min\{\frac{1}{1-\sigma}, s\}$ we obtain local in time $\mathcal{S}^{\theta}_{s}$ well-posedness.
	\end{remark}
	
	Now we discuss the constraints on the parameters $s, \theta$ and $\sigma$ in Theorem \ref{main_theorem} by considering some particular cases. For the model operator \eqref{eq_model_opeartor} we state the second main result of this paper.
	
	\begin{proposition}\label{prop_model_problem}
		If the Cauchy problem for the operator $M$ given by \eqref{eq_model_opeartor} is $\mathcal{S}^{\theta}_{s}$ well-posed then $\max\{\frac{1}{\theta}, \frac{1}{s}\} \geq 1-\sigma$. 
	\end{proposition}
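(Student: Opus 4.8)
The plan is to argue by contraposition: assume $\max\{1/\theta,1/s\}<1-\sigma$, i.e.\ $\theta>1/(1-\sigma)$ and $s>1/(1-\sigma)$ (so, in particular, $s>1$), and suppose, for contradiction, that the Cauchy problem for $M$ is $\mathcal S^{\theta}_{s}$ well-posed; fix $\rho=(\rho_{1},\rho_{2})$ small and let $\tilde\rho,C$ be the data provided by well-posedness. First I would remove the first order term by conjugation: set $\Lambda(x)=\int_{0}^{x}\langle y\rangle^{-\sigma}\,dy$, an odd smooth function, real-analytic in a fixed strip, with $\Lambda(x)=\tfrac{x^{1-\sigma}}{1-\sigma}+O(1)$ as $x\to+\infty$. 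A direct computation gives
\[
e^{-\Lambda/2}\,M\,e^{\Lambda/2}\;=\;\widetilde M\;:=\;D_{t}+D_{x}^{2}+b_{0}(x),\qquad b_{0}(x)=\tfrac{\sigma}{2}\,x\langle x\rangle^{-\sigma-2}+\tfrac14\langle x\rangle^{-2\sigma},
\]
with $b_{0}$ real-valued, bounded and uniformly real-analytic. Hence $u$ solves $Mu=0$ iff $v:=e^{-\Lambda/2}u$ solves $\widetilde Mv=0$, and the multiplier $e^{\Lambda(x)/2}$ grows like $e^{x^{1-\sigma}/(2(1-\sigma))}$ as $x\to+\infty$ while decaying as $x\to-\infty$.

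Next I would introduce the test family. For $n\in\N$ let $v_{n}\in C^{1}([0,T];\mathcal S^{1}_{1}(\R))$ solve $\widetilde Mv_{n}=0$ with datum $v_{n}(0,x)=e^{-x^{2}}e^{inx}$; this is well posed since $\widetilde M$ is a Schr\"odinger operator with bounded, uniformly analytic coefficients, and the Duhamel representation around the free group $e^{it\partial_{x}^{2}}$ shows that $v_{n}(t,\cdot)$ is analytic in a fixed strip and decays exponentially, its free component being the explicit Gaussian coherent state concentrated at $x=2nt$ (spreading only by $O(1)$ and keeping an $L^{2}$ mass bounded below uniformly in $n$). Set $u_{n}:=e^{\Lambda/2}v_{n}$; then $Mu_{n}=0$ on $[0,T]\times\R$ and $u_{n}(0,\cdot)=e^{\Lambda/2-x^{2}}e^{inx}=:g_{n}$. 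Since $g_{n}$ and every $u_{n}(t,\cdot)$ are analytic in a fixed strip and decay at least exponentially, one verifies $g_{n}\in H^{0}_{\rho;\theta,s}(\R)$ and $u_{n}(t)\in H^{0}_{\tilde\rho;\theta,s}(\R)$ for all $t$; by the uniqueness clause in the definition of $\mathcal S^{\theta}_{s}$ well-posedness, $u_{n}$ must coincide with the solution furnished for the datum $g_{n}$, so \eqref{eq_energy_estimate_Gelfand_shilov} applies to $u_{n}$ with $f\equiv0$.

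The contradiction then comes from two competing estimates. On one side, $g_{n}$ sits near $x=0$ with faster-than-exponential decay and fixed-strip analyticity, so only its frequency-$n$ modulation is costly, and a contour-shift argument yields $\|g_{n}\|^{2}_{H^{0}_{\rho;\theta,s}}\le C_{\rho}\,e^{Cn^{1/\theta}}$. On the other side, since $v_{n}(t)$ retains a positive fraction of its mass in a unit neighbourhood of $x=2nt$,
\[
\|u_{n}(t)\|^{2}_{H^{0}_{\tilde\rho;\theta,s}}\;\ge\;\|u_{n}(t)\|^{2}_{L^{2}}\;=\;\int_{\R}e^{\Lambda(x)}\,|v_{n}(t,x)|^{2}\,dx\;\ge\;c\,e^{\Lambda(2nt)}\;=\;\exp\!\Big(\tfrac{(2nt)^{1-\sigma}}{1-\sigma}\big(1+o(1)\big)\Big).
\]
Inserting both bounds into \eqref{eq_energy_estimate_Gelfand_shilov} at $t=T$ gives $\exp\!\big(\tfrac{(2T)^{1-\sigma}}{1-\sigma}n^{1-\sigma}(1+o(1))\big)\le CC_{\rho}\,e^{Cn^{1/\theta}}$, which is impossible for $n$ large because $1-\sigma>1/\theta$. (Retaining the weight $e^{\tilde\rho_{2}\langle x\rangle^{1/s}}$ on the left, which is of size $e^{\tilde\rho_{2}(2nt)^{1/s}}$ near the packet, rules out $1-\sigma>1/s$ in exactly the same way; either inequality suffices.)

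The hard part will be the last step, namely making the two norm estimates rigorous: one must control how multiplication by the exponentially growing weight $e^{\Lambda/2}$ interacts with the non-commuting operators $e^{\pm\rho_{1}\langle D\rangle^{1/\theta}}$ and $e^{\pm\rho_{2}\langle x\rangle^{1/s}}$, and verify that the $\widetilde M$-evolution of the coherent state $e^{-x^{2}}e^{inx}$ stays concentrated near $x=2nt$ with a uniformly positive share of its $L^{2}$ mass there. The explicit Gaussian form of the free part, together with analyticity in a fixed complex strip handled through contour deformations, are the main tools I would use for both.
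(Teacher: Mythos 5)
Your strategy is genuinely different from the paper's and is, in outline, the classical explicit-conjugation/wave-packet heuristic behind such necessary conditions: gauge away $i\langle x\rangle^{-\sigma}D_x$ with $e^{\Lambda/2}$, launch a packet at frequency $n$ into the region where $e^{\Lambda/2}\sim e^{c x^{1-\sigma}}$ is large, and compare $e^{cn^{1-\sigma}}$ growth with the $e^{Cn^{1/\theta}}$ cost of the datum (note that, if completed, this would even give the stronger necessity $1/\theta\ge 1-\sigma$ irrespective of $s$). However, as written there is a genuine gap precisely at the point where you invoke \eqref{eq_energy_estimate_Gelfand_shilov}. The energy inequality applies to \emph{the} solution furnished by well-posedness, which lives in $C^1([0,T];H^{0}_{\tilde\rho;\theta,s})$ for a specific, uncontrolled $\tilde\rho$ attached to $\rho$; the uniqueness clause of the definition only covers that class. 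To transfer the inequality to your handcrafted $u_n=e^{\Lambda/2}v_n$ you must either prove $u_n\in C^1([0,T];H^{0}_{\tilde\rho;\theta,s})$ (uniform strip-analyticity and sufficiently strong exponential decay of the perturbed evolution $v_n(t)$, plus $C^1$-in-$t$ regularity in these exponentially weighted norms), or prove uniqueness for $M$ in a class large enough to contain both solutions. Neither is supplied: the persistence statement is exactly the kind of conjugated a priori estimate that constitutes the positive result (Section \ref{section_main_theorem}), and a uniqueness theorem of Kajitani--Baba type is unavailable in the contradiction regime, since Theorem \ref{KB_Gevrey_theorem} requires $\theta<\frac{1}{1-\sigma}$, which is precisely what you negate; note also that undoing your conjugation on the left half-line costs a factor $e^{c|x|^{1-\sigma}}$ that beats the available decay $e^{-c|x|^{1/s}}$ when $1/s<1-\sigma$, so the conjugation trick does not give uniqueness for free either. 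You flag this as ``the hard part,'' but it is not a finishing touch: it is the core of the argument.

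Two further steps need repair or proof. First, the claim that $v_n(t)$ keeps a uniformly positive fraction of its $L^2$ mass near $x=2nt$ for \emph{all} $t\in[0,T]$ does not follow from the Duhamel bound you invoke, since the error is of size $t\|b_0\|_\infty\|v_n(0)\|_{L^2}$, which is not small for $t$ up to $T$; you can fix this by running the contradiction at a small fixed time $T^*$ (legitimate, since \eqref{eq_energy_estimate_Gelfand_shilov} holds for every $t$), but this must be said, and the ``spreads only by $O(1)$'' claim is also inaccurate (the free Gaussian width grows linearly in $t$, harmlessly). Second, the bounds $\|g_n\|_{H^{0}_{\rho;\theta,s}}^2\le Ce^{Cn^{1/\theta}}$ and the lower bound for $\|u_n(T^*)\|_{L^2}$ require carrying out the interaction of $e^{\Lambda/2}$ with the non-commuting weights $e^{\rho_1\langle D\rangle^{1/\theta}}$, $e^{\rho_2\langle x\rangle^{1/s}}$; plausible via contour shifts, but not done. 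The paper's proof of Proposition \ref{prop_model_problem} is engineered to avoid all of this: it never constructs a solution, but takes the data $\phi_k$ translated to $x\sim 4\sigma_k$ and frequency-concentrated at $\xi\sim\sigma_k$ (paying both $e^{-c\sigma_k^{1/s}}$ and $e^{-c\sigma_k^{1/\theta}}$ in $E_k(0)$), and extracts the growth $e^{cT^*\sigma_k^{1-\sigma}}$ from the equation itself through the microlocal energy \eqref{eq_def_energy}, the Calder\'on--Vaillancourt upper bound (Proposition \ref{proposition_upper_bound_energy_E_k}), the sharp G{\aa}rding lower bound (Proposition \ref{propostion_lower_bound_derivative_energy}) and Gronwall, which is why it only needs the energy inequality for the solution the definition already provides.
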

	
	From Proposition \ref{prop_model_problem} it follows that when $s \geq \theta$ we cannot allow $\theta > \frac{1}{1-\sigma}$ in Theorem \ref{main_theorem}.
	
	Now we investigate the case $s < \theta$. For this latter case we consider the Cauchy problem for the Schr\"odinger operator in one space dimension
	\begin{equation}\label{CP_schrodinger_operator}
	\begin{cases}
		\{D_t+D^2_x\}u(t,x) = 0, \quad t \in [0,T], x \in \R, \\
		u(0,x) = g(x), \qquad \qquad x \in \R,
	\end{cases}
	\end{equation}
	where $g \in \mathcal{S}^{\theta}_{s}(\R)$. The solution of \eqref{CP_schrodinger_operator} can be written explicitly as
	\begin{equation}\label{eq_sol_CP_schrodinger_operator}
	u_{g}(t,x) = \int_{\R} e^{i\xi x} e^{-i\xi^2t} \widehat{g}(\xi) \dslash\xi,
	\end{equation}
	where $\dslash \xi = (2\pi)^{-1} d\xi$ and $\widehat{g}$ denotes the Fourier transform of $g$.
	The Gevrey regularity of $u_g$ is given by the index $\theta$, while integration by parts and costumary computations give that $u_{g}$ decays as $e^{-c|x|^{\frac{1}{\max\{s,\theta\}}}}$ at $|x| \to \infty$. So, we conclude that $u_g(t, \cdot) \in \mathcal{S}^{\theta}_{\max\{s,\theta\}} (\R)$. Thus, we obtain that the Cauchy problem \eqref{CP_schrodinger_operator} is $\mathcal{S}^{\theta}_{s}$ well-posed when $s \geq \theta$. 
	
	Notice that the Fourier transform in $x$ of $u_g(t,x)$ is given by $\widehat{u}_g(t,\xi) = e^{-i\xi^2t} \widehat{g}(\xi)$ and from Theorem $6.1.2$ of \cite{nicola_rodino_global_pseudo_diffferential_calculus_on_euclidean_spaces} we have (for any fixed $t > 0$)
	$$
	g \in \mathcal{S}^{\theta}_{s}(\R) \iff \widehat{g}\in\mathcal{S}^{s}_{\theta}(\R) \quad \text{and} \quad u_g (t, x) \in \mathcal{S}^{\theta}_{s}(\R) \iff e^{-i\xi^2t} \widehat{g}(\xi) \in \mathcal{S}^{s}_{\theta}(\R).
	$$
	Hence, if the Cauchy problem \eqref{CP_schrodinger_operator} is well-posed in $\mathcal{S}^{\theta}_{s}(\R)$ then for any fixed $t > 0$ the function $h(\xi) = e^{-it\xi^2}$ defines a multiplier in $\mathcal{S}^{s}_{\theta}(\R)$. The last result of this paper, Proposition \ref{main_prop_2} below, implies $\mathcal{S}^{\theta}_{s}$ ill-posedness of \eqref{CP_schrodinger_operator} provided that $1 \leq s < \theta$. As a consequence, we conclude that $s < \theta$ is not allowed in Theorem \ref{main_theorem}.
	
	\begin{proposition}\label{main_prop_2}
		Let $1 \leq s < \theta$. For any fixed $t > 0$ the function $h(\xi) = e^{-it\xi^2}$ does not define a multiplier in $\mathcal{S}^{s}_{\theta}(\R)$.
	\end{proposition}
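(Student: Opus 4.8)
The plan is to transfer the statement to the Fourier side, where multiplication by $h$ turns into the free Schr\"odinger propagator, and to identify the obstruction as a loss of spatial decay. Recall (Theorem $6.1.2$ of \cite{nicola_rodino_global_pseudo_diffferential_calculus_on_euclidean_spaces}) that $\mathcal F\colon \mathcal S^{\theta}_{s}(\R)\to\mathcal S^{s}_{\theta}(\R)$ is a topological isomorphism; hence $h$ defines a multiplier in $\mathcal S^{s}_{\theta}(\R)$ if and only if the map $\phi\mapsto \mathcal F^{-1}\big(h\,\widehat\phi\,\big)$ sends $\mathcal S^{\theta}_{s}(\R)$ into itself. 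Since $\mathcal F^{-1}(h\,\widehat\phi)=K_t*\phi$ with $K_t:=\mathcal F^{-1}(e^{-it\xi^2})=c_t\,e^{ix^2/(4t)}$ for a nonzero constant $c_t$ (a bounded function, so the convolution is a well-defined smooth function), it suffices to exhibit a single $\phi\in\mathcal S^{\theta}_{s}(\R)$ with $K_t*\phi\notin\mathcal S^{\theta}_{s}(\R)$.

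For the test function I would take $\phi\in C_0^{\infty}(\R)$, $\phi\not\equiv0$, with $\phi\in G^{\theta}(\R)$ but $\phi\notin G^{s}(\R)$. Such a $\phi$ exists because $1\le s<\theta$ and the Gevrey classes $G^{\sigma}$ are strictly increasing in $\sigma$: for $s>1$ one may take a compactly supported function whose Fourier transform decays exactly like $e^{-|\xi|^{1/\theta}}$ (and no faster), and for $s=1$ any nonzero element of $C_0^{\infty}(\R)\cap G^{\theta}(\R)$ works, being automatically non-analytic. Being compactly supported, $\phi$ decays faster than any $e^{-c|x|^{1/s}}$, so $\phi\in\mathcal S^{\theta}_{s}(\R)$, and consequently $\widehat\phi\in\mathcal S^{s}_{\theta}(\R)$.

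Next I would use the elementary ``chirp'' identity obtained by completing the square in the Gaussian integral:
$$
(K_t*\phi)(x)=c_t\int_{\R}e^{i(x-y)^2/(4t)}\phi(y)\,dy
= c_t\,e^{ix^2/(4t)}\,\widehat{\psi}\Big(\frac{x}{2t}\Big),\qquad \psi(y):=e^{iy^2/(4t)}\phi(y),
$$
so that $|(K_t*\phi)(x)|=|c_t|\,\big|\widehat\psi(x/(2t))\big|$. Here $\psi$ is smooth, supported in $\supp\phi$, still belongs to $G^{\theta}(\R)$ (multiplication by the entire function $e^{iy^2/(4t)}$ preserves the Gevrey class on the compact set $\supp\phi$, since real-analytic functions are $G^{1}\subset G^{\theta}$ there and $G^{\theta}$ is a ring), and $\psi\notin G^{s}(\R)$, since otherwise $\phi=e^{-iy^2/(4t)}\psi$ would lie in $G^{s}(\R)$ as well. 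By the usual two integration-by-parts arguments for compactly supported functions, $\psi\in G^{\theta}$ gives $|\widehat\psi(\eta)|\le C e^{-c|\eta|^{1/\theta}}$, whereas $\psi\notin G^{s}$ forces $\sup_{\eta}|\widehat\psi(\eta)|\,e^{\varepsilon|\eta|^{1/s}}=+\infty$ for every $\varepsilon>0$. Therefore $\sup_{x}|(K_t*\phi)(x)|\,e^{\varepsilon|x|^{1/s}}=+\infty$ for every $\varepsilon>0$; as $1/s>1/\theta$, this is incompatible with membership in $\mathcal S^{\theta}_{s}(\R)$, every element $u$ of which satisfies $|u(x)|\le C e^{-c|x|^{1/s}}$ for some $c>0$. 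Hence $h\,\widehat\phi=\mathcal F(K_t*\phi)\notin\mathcal S^{s}_{\theta}(\R)$ while $\widehat\phi\in\mathcal S^{s}_{\theta}(\R)$, which is the assertion.

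The step needing most care is this last comparison of decay rates: one must be sure that the decay lost by $K_t*\phi$ — degraded from the ``compact support'' decay of $\phi$ down to roughly $e^{-c|x|^{1/\theta}}$, i.e. the Gevrey order of $\phi$ reappearing as a decay rate after the Fourier transform — genuinely cannot be recovered. This is exactly where the sharpness of the Gevrey--Fourier characterization of $C_0^{\infty}$ functions and the strict inclusion $G^{s}\subsetneq G^{\theta}$ (valid precisely because $s<\theta$) are used. The Gaussian computation and the fact that $\mathcal F$ exchanges the two relevant Gelfand--Shilov spaces are routine.
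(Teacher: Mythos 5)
Your argument is sound and takes a genuinely different route from the paper. The paper never leaves the Fourier side: it tests $h$ against the explicit function $f(\xi)=e^{-\langle\xi\rangle^{1/\theta}}\in\mathcal{S}^{1}_{\theta}(\R)\subset\mathcal{S}^{s}_{\theta}(\R)$, passes from bounds on $\partial^{\alpha}_{\xi}(hf)$ to bounds on $(\partial^{\alpha}_{\xi}h)f$ by a Leibniz-type induction, and then contradicts these bounds using the explicit Hermite-type formula for $\partial^{\alpha}_{\xi}h$ evaluated at the points $\xi_{\alpha}=t^{-1/2}\alpha^{\theta}$, where a careful lower bound of size $\alpha^{\theta\alpha}$ beats the admissible growth $\alpha!^{s}$ precisely because $s<\theta$; the obstruction is thus detected in the \emph{regularity} index of $\mathcal{S}^{s}_{\theta}$. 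You instead conjugate by the Fourier transform, write $\mathcal{F}^{-1}(h\widehat{\phi})=K_t*\phi$ with the chirp kernel, and use the identity $(K_t*\phi)(x)=c_t\,e^{ix^{2}/(4t)}\,\widehat{\psi}(x/(2t))$, $\psi=e^{iy^{2}/(4t)}\phi$, to detect the obstruction as a \emph{loss of decay}: since a compactly supported function belongs to $G^{s}$ as soon as its Fourier transform is $O(e^{-\varepsilon|\xi|^{1/s}})$, the failure $\psi\notin G^{s}$ forces $K_t*\phi$ to decay more slowly than every $e^{-\varepsilon|x|^{1/s}}$, which is incompatible with membership in $\mathcal{S}^{\theta}_{s}(\R)$. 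Your route is more conceptual — it exhibits the mechanism (the Schr\"odinger flow degrading the decay of compactly supported Gevrey data) that the paper only notes as a consequence in the remark following the proposition — while the paper's computation is elementary, explicit and entirely self-contained.

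The one step you should not leave as stated is the existence of the test function when $1<s<\theta$: you need $\phi\in C^{\infty}_{0}(\R)\cap G^{\theta}(\R)$ with $\phi\notin G^{s}(\R)$, and your justification (``a compactly supported function whose Fourier transform decays exactly like $e^{-|\xi|^{1/\theta}}$ and no faster'') is an assertion, not a construction; one cannot simply prescribe two-sided pointwise decay of the Fourier transform of a compactly supported function (its entire extension is of exponential type and has zeros on or near $\R$), so as written this sentence proves nothing. The fact you actually need — that the compactly supported Gevrey classes are strictly nested, i.e. $G^{\theta}_{0}\setminus G^{s}\neq\emptyset$ for $s<\theta$ — is classical and should either be cited or obtained by a standard construction (e.g. a lacunary series in $G^{\theta}\setminus G^{s}$ multiplied by a Gevrey cutoff of order at most $s$). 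The case $s=1$ is fine as you argue. The remaining ingredients — the exchange property of $\mathcal{F}$ between $\mathcal{S}^{\theta}_{s}$ and $\mathcal{S}^{s}_{\theta}$, the stability of $G^{\theta}$ under multiplication by the chirp on a compact set (and the converse, giving $\psi\notin G^{s}$), the contrapositive Fourier-decay criterion for $G^{s}$, and the final comparison with the decay $|u(x)|\le Ce^{-c|x|^{1/s}}$ enjoyed by every element of $\mathcal{S}^{\theta}_{s}(\R)$ — are all correctly used.
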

	
	\begin{remark}
		Another consequence of Proposition \ref{main_prop_2} is that if $g \in \mathcal{S}^{\theta}_{s}(\R)$ with $1\leq s < \theta$, then the solution of \eqref{CP_schrodinger_operator} may present a weaker decay at infinity with respect to the initial datum.
	\end{remark}
	
	\begin{remark}
		In \cite{ascanelli_cappiello_schrodinger_equations_Gelfand_shillov}, under slightly different assumptions on the coefficients $a_j$ and $b$, the authors studied the problem \eqref{CP} in Gelfand-Shilov type spaces. They proved that if the initial data belong to a suitable Gelfand-Shilov class, then there exists a solution with a possible exponential growth at $|x| \to \infty$. There, no well-posedness results in $\mathcal{S}^{\theta}_{s}$ were achieved.
	\end{remark}
	
	
	\section{Proof of Theorem \ref{main_theorem}}\label{section_main_theorem}
		
	We begin performing the conjugation of the operator $S$ by the exponential $e^{\delta\langle x \rangle^{\frac{1}{s}}}$, where $\delta > 0$. For any $j = 1, \ldots, n$, we have
		\begin{equation*}
			e^{\delta\langle x \rangle^{\frac{1}{s}}}\, D_{x_j}\, e^{-\delta\langle x \rangle^{\frac{1}{s}}} = D_{x_j} - \delta D_{x_j}\langle x \rangle^{\frac{1}{s}},
		\end{equation*}
		\begin{equation*}
			e^{\delta \langle x \rangle^{\frac{1}{s}}}\, \partial^{2}_{x_j}\, e^{-\delta\langle x \rangle^{\frac{1}{s}}} =
			\partial^{2}_{x_j} - 2\delta \partial_{x_j} \langle x \rangle^{\frac{1}{s}} \partial_{x_j} - \delta \partial^{2}_{x_j}\langle x \rangle^{\frac{1}{s}} + (\delta\partial_{x_j} \langle x \rangle^{\frac{1}{s}})^{2}.
		\end{equation*}
	Hence
		\begin{equation}\label{eq_conjugation_second_order}
			e^{\delta\langle x \rangle^{\frac{1}{s}}} \Delta_x e^{-\delta\langle x \rangle^{\frac{1}{s}}} = \Delta_x -i2\delta\sum_{j=1}^{n} \partial_{x_j}\langle x \rangle^{\frac{1}{s}} D_{x_j} - \delta\Delta_x \langle x \rangle^{\frac{1}{s}} + \delta^{2}\sum_{j=1}^{n} (\partial_{x_j}\langle x \rangle^{\frac{1}{s}})^{2},
		\end{equation}
		\begin{equation}\label{eq_conjugation_first_order}
			e^{\delta\langle x \rangle^{\frac{1}{s}}} \left\{ \sum_{j=1}^{n} a_j(t,x) D_{x_j}\right\} e^{-\delta \langle x \rangle^{\frac{1}{s}}} =
			\sum_{j=1}^{n} a_j(t,x) D_{x_j} - \delta \sum_{j=1}^{n} a_j(t,x)D_{x_j}\langle x \rangle^{\frac{1}{s}}.
		\end{equation}
	Therefore, from \eqref{eq_conjugation_second_order} and \eqref{eq_conjugation_first_order} we get
	\begin{equation}
		S_\delta := e^{\delta \langle x \rangle^{\frac{1}{s}}} S e^{-\delta\langle x \rangle^{\frac{1}{s}}} = 
		D_t + \Delta_x + \sum_{j=1}^{n} a_{j,\delta}(t,x) D_{x_j} + b_{\delta}(t,x),
	\end{equation}
	where
	\begin{equation}
		a_{j,\delta}(t,x) := a_j(t,x) - i2\delta \partial_{x_j} \langle x \rangle^{\frac{1}{s}}, \quad  j = 1, \ldots, n,
	\end{equation}
	\begin{equation}
		b_{\delta}(t,x) := b(t,x) - \delta\Delta_x \langle x \rangle^{\frac{1}{s}} + \delta^{2}\sum_{j=1}^{n} (\partial_{x_j}\langle x \rangle^{\frac{1}{s}})^{2}
		- \delta \sum_{j=1}^{n} a_j(t,x)D_{x_j}\langle x \rangle^{\frac{1}{s}}.
	\end{equation}

	Note that the operator $S_{\delta}$ possesses Gevrey regular coefficients of order $\theta_0$.  Besides, the imaginary part of the coefficients of order one satisfy
	\begin{equation}
		|Im \, a_{j,\delta}(t,x)| \leq |Im \, a_j(t,x)| + 2\delta|\partial_{x_j}\langle x \rangle^{\frac{1}{s}}| \leq C \langle x \rangle^{-\min\{\sigma, 1-\frac{1}{s}\}},
	\end{equation}
	for some positive constant $C > 0$ depending on $\delta, s$ and the imaginary part of the coefficients $a_j$, $j=1,\ldots, n$. So, Theorem \ref{KB_Gevrey_theorem} implies that the auxiliary Cauchy problem 
	\begin{equation}\label{CP_conjugated_problem}
		\begin{cases}
			S_{\delta} u(t,x) = e^{\delta \langle x \rangle^{\frac{1}{s}}}f(t,x), \quad t \in [0,T], x \in \R^{n}, \\
			u(0,x) = e^{\delta\langle x \rangle^{\frac{1}{s}}}g(x), \quad \quad \,\,\, x \in \R^{n},
		\end{cases}
	\end{equation}
	is $H^{\infty}_{\theta}$ well-posed, provided that $\theta \in [\theta_0, \min\{1-\sigma, \frac{1}{s}\})$.
	
	Given those preliminaries we are finally ready to prove Theorem \ref{main_theorem}.
	
	\begin{proof}
		Let $\rho = (\rho_1, \rho_2)$, with $\rho_1, \rho_2 > 0$, and assume that the initial data for $\eqref{CP}$ satisfy 
		$$
		g \in H^{0}_{\rho;\theta,s}(\R^{n})\quad  \text{and} \quad f \in C([0,T]; H^{0}_{\rho;\theta,s}(\R^{n})).
		$$ 
		Then, for any $\delta \in (0, \rho_2)$ we obtain 
		$$
		g_\delta := e^{\delta\langle x \rangle^{\frac{1}{s}}} g \in H^{0}_{(\rho_1,\rho_2-\delta);\theta,s}(\R^{n})\quad \text{and} \quad 
		f_{\delta} := e^{\delta\langle x \rangle^{\frac{1}{s}}} f \in C([0,T]; H^{0}_{(\rho_1, \rho_2-\delta);\theta,s}(\R^{n})).
		$$
		
		Since $\rho_2-\delta > 0$ we have $g_\delta \in H^{0}_{\rho_1;\theta}(\R^{n})$ and $f_{\delta} \in C([0,T];H^{0}_{\rho_1;\theta}(\R^{n}))$. Under the assumption $\theta \in [\theta_0, \min\{\frac{1}{1-\sigma},s\}]$ the auxiliary Cauchy problem \eqref{CP_conjugated_problem} is $H^{\infty}_{\theta}$ well-posed. So, there exists a unique solution $v \in C^1([0,T]; H^{0}_{\tilde{\rho}_{1};\theta}(\R^{n}))$ of \eqref{CP_conjugated_problem} with initial data $g_\delta$ and $f_\delta$ and the solution $v$ satisfies
		\begin{equation}\label{eq_energy_solution_auxilliary_problem}
			\| v(t,\cdot)\|^{2}_{H^{0}_{\tilde{\rho}_1;\theta}} \leq C(\rho_1,T) \left\{ \|g_{\delta}\|^{2}_{H^{0}_{\rho_1;\theta}} + \int_{0}^{t} \|f_{\delta}(\tau,\cdot)\|^{2}_{H^{0}_{\rho_1;\theta}} d\tau \right\}.
		\end{equation}
	
		Next we define $u(t,x) = e^{-\delta\langle x \rangle^{\frac{1}{s}}} v(t,x)$. Then $u \in C^{1}([0,T]; H^{0}_{(\tilde{\rho}_1, \delta);\theta,s}(\R^{n}))$ and $u$ solves the Cauchy problem \eqref{CP}. Moreover, in view of \eqref{eq_energy_solution_auxilliary_problem} we get 
		\begin{align*}
			\|u(t,\cdot)\|_{H^{0}_{(\tilde{\rho}_1,\delta);\theta,s}} &\leq C(\delta) \|v(t,\cdot)\|_{H^{0}_{\tilde{\rho}_1;\theta}} \\
			&\leq C(\rho_1,\delta,T) \left\{ \|g_{\delta}\|^{2}_{H^{0}_{\rho_1;\theta}} + \int_{0}^{t} \|f_{\delta}(\tau,\cdot)\|^{2}_{H^{0}_{\rho_1;\theta}} d\tau \right\} \\
			&\leq C(\rho_1,\delta,T) \left\{ \|g\|^{2}_{H^{0}_{(\rho_1, \delta);\theta,s}} + \int_{0}^{t} \|f(\tau,\cdot)\|^{2}_{H^{0}_{(\rho_1,\delta);\theta,s}} d\tau \right\} \\
			&\leq C(\rho_1,\delta,T) \left\{ \|g\|^{2}_{H^{0}_{\rho;\theta,s}} + \int_{0}^{t} \|f(\tau,\cdot)\|^{2}_{H^{0}_{\rho;\theta,s}} d\tau \right\},
		\end{align*}
		for some positive constant $C(\rho_1, \delta, T)$ depending on $\rho_1$, $\delta < \rho_2$ and $T$.
		
		In order to conclude the uniqueness of the solution, let $u_j \in C^1([0,T];H^{0}_{(\tilde{\rho}_1, \delta);\theta, s}(\R^{n}))$, $j = 1, 2$, $\delta < \rho_2$, two solutions for the problem \eqref{CP}. Then, for any $\tilde{\delta} \in (0, \delta)$, $e^{\tilde{\delta}\langle x \rangle^{\frac{1}{s}}}u_j$, $j=1,2$, are solutions of the Cauchy problem \eqref{CP_conjugated_problem}, with $\delta$ replaced by $\tilde{\delta}$, and belong to $C^1([0,T];H^{0}_{\tilde{\rho}_1;\theta}(\R^{n}))$. From the $H^{\infty}_{\theta}$ well-posedness of \eqref{CP_conjugated_problem} we conclude $e^{\tilde{\delta}\langle x \rangle^{\frac{1}{s}}}u_1 = e^{\tilde{\delta}\langle x \rangle^{\frac{1}{s}}}u_2$, that is, $u_1 = u_2$.
	\end{proof}

	\section{Proof of Proposition \ref{prop_model_problem}}\label{section_model_case}
	
	In this section we deal with the Cauchy problem for the model operator $M$ given by \eqref{eq_model_opeartor}, that is, for $t \in [0,T]$ and $x \in \R$,
	\begin{equation}\label{cauchy_problem_Model}
		\begin{cases}
			\{D_t + D^2_x + i\langle x \rangle^{-\sigma}D_x\}u(t,x) = 0, \\
			u(0,x)= g(x),
		\end{cases}
	\end{equation}
	where $\sigma > 0$. Following an argument inspired by \cite{AAC_necessary} and \cite{dreher} we shall prove that if the Cauchy problem \eqref{cauchy_problem_Model} is $\mathcal{S}^{\theta}_{s}$ well-posed, then $(1-\sigma) > \max \{\frac{1}{\theta}, \frac{1}{s}\}$ leads to a contradiction. 
	
	Before proceeding with the proof we need to establish some notations and state some results. As usual, for any $m \in \R$, $S^{m}_{0,0}(\R)$ stands for the space of all functions $p \in C^{\infty}(\R^2)$ such that for any $\alpha, \beta \in \N_0$ there exists $C_{\alpha,\beta} > 0$ such that 
	$$
	|\partial^{\alpha}_{\xi} \partial^{\beta}_{x} p(x,\xi)| \leq C_{\alpha,\beta} \langle \xi \rangle^{m}.
	$$
	The topology in $S^{m}_{0,0}(\R)$ is induced by the following family of seminorms
	$$
	|p|^{(m)}_\ell := \max_{\alpha \leq \ell, \beta \leq \ell} \sup_{x, \xi \in \R} |\partial^{\alpha}_{\xi} \partial^{\beta}_{x} p(x,\xi)| \langle \xi \rangle^{-m}, \quad p \in S^{m}_{0,0}(\R), \, \ell \in \N_0.
	$$
	We associate to every symbol $p \in S^{m}_{0,0}(\R)$ the continuous operator $p(x,D): \mathscr{S}(\R) \to \mathscr{S}(\R)$ (Schwartz space of rapidly decreasing functions), known as pseudodifferential operator, defined by 
	$$
	p(x,D) u(x) = \int e^{i\xi x} p(x,\xi) \widehat{u}(\xi) \dslash\xi, \quad u \in \mathscr{S}(\R).
	$$
	
	The next result gives the action of operators coming from symbols $S^{m}_{0,0}(\R)$ in the standard sobolev spaces $H^{s}(\R)$, $s \in \R$. For a proof we address the reader to Theorem $1.6$ on page $224$ of \cite{Kumano-Go}.
	
	\begin{theorem}\label{theorem_Calderon_Vaillancourt}[Calder\'on-Vaillancourt]
		Let $p \in S^{m}_{0,0}(\R)$. Then for any real number $s \in \R$ there exist $\ell := \ell(s,m) \in \N_0$ and $C:= C_{s,m} > 0$ such that 
		\begin{equation*}
			\| p(x,D)u \|_{H^{s}(\R)} \leq C |p|^{(m)}_{\ell} \| u \|_{H^{s+m}(\R)}, \quad \forall \, u \in H^{s+m}(\R).
		\end{equation*}
		Besides, when $m = s = 0$ we can replace $|p|^{(m)}_{\ell}$ by
		\begin{equation*}
			\max_{\alpha, \beta \leq 2} \sup_{x, \xi \in \R} |\partial^{\alpha}_{\xi} \partial^{\beta}_{x} p(x,\xi)|.
		\end{equation*}
	\end{theorem}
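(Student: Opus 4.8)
The plan is to isolate the $L^2$ bound (the case $m=s=0$) as the analytic core and to recover the general $H^{s}$ estimate from it by conjugating with powers of $\langle D \rangle$. For the reduction, given $p \in S^m_{0,0}(\R)$ and $s \in \R$, I would set
$$
q(x,D) := \langle D \rangle^{s}\, p(x,D)\, \langle D \rangle^{-s-m}.
$$
Since $\langle \xi \rangle^{s} \in S^{s}_{1,0}(\R)$ and $\langle \xi \rangle^{-s-m} \in S^{-s-m}_{1,0}(\R)$, composing a $(1,0)$--symbol on the left and on the right of a $(0,0)$--symbol keeps us in the $(0,0)$ class: in the standard asymptotic expansion $\sum_\alpha \frac{1}{\alpha!}\partial_\xi^\alpha a\, D_x^\alpha b$ the $\alpha$--th term has order $-|\alpha|$, so $q \in S^0_{0,0}(\R)$ with every seminorm $|q|^{(0)}_\ell$ bounded by $C_{s,m,\ell}\,|p|^{(m)}_{\ell'}$ for some $\ell' = \ell'(s,m,\ell)$. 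Granting the $L^2$ boundedness of operators with $S^0_{0,0}$ symbols, for $u \in \mathscr{S}(\R)$ one then has
$$
\| p(x,D)u \|_{H^{s}} = \| \langle D \rangle^{s} p(x,D) u \|_{L^2} = \| q(x,D) \langle D \rangle^{s+m} u \|_{L^2} \le \| q(x,D) \|_{L^2 \to L^2}\, \| u \|_{H^{s+m}},
$$
and the estimate passes to all $u \in H^{s+m}(\R)$ by density of $\mathscr{S}(\R)$. This already yields the claimed $\ell(s,m)$ and $C_{s,m}$, once the core estimate is proved with control by finitely many seminorms.

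For the core $L^2$ estimate I would use an almost--orthogonal phase--space decomposition together with the Cotlar--Stein lemma, which is essential here because the Schwartz kernel of a generic $S^0_{0,0}$ operator (e.g.\ $p\equiv 1$, giving $\delta$) is far from integrable, so a bare Schur test cannot work and one must exploit cancellation. Fix $\phi, \psi \in C_c^\infty(\R)$ with $\sum_{j \in \Z}\phi(\cdot - j) = \sum_{k \in \Z}\psi(\cdot - k) = 1$ and split $p = \sum_{(j,k) \in \Z^2} p_{j,k}$, $p_{j,k}(x,\xi) = p(x,\xi)\phi(x-j)\psi(\xi-k)$, so that $p(x,D) = \sum_{j,k} T_{j,k}$ with
$$
T_{j,k}u(x) = \phi(x-j)\int e^{ix\xi}\, p(x,\xi)\,\psi(\xi-k)\,\widehat{u}(\xi)\,\dslash\xi .
$$
Thus $T_{j,k}$ outputs functions localized near $x \approx j$ and feeds on input frequencies $\xi \approx k$. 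The aim is the Cotlar--Stein bounds
$$
\| T_{j,k}^{*}T_{j',k'} \| + \| T_{j,k}T_{j',k'}^{*} \| \le C_N\,(1+|j-j'|)^{-N}(1+|k-k'|)^{-N},
$$
in which the two products carry complementary information. In $T_{j,k}^{*}T_{j',k'}$ the overlap of the spatial cut--offs $\phi(\cdot-j)\phi(\cdot-j')$ confines $|j-j'|$ to a bounded range, while the remaining $x$--integral is the Fourier transform in $x$ of a compactly supported amplitude assembled from $p$, hence decays in $|\xi-\eta| \approx |k-k'|$ at a rate fixed by the number of bounded $x$--derivatives of $p$; dually, $T_{j,k}T_{j',k'}^{*}$ uses the frequency cut--offs $\psi(\cdot-k)\psi(\cdot-k')$ to bound $|k-k'|$ and integration by parts in $\xi$ (bounded $\xi$--derivatives of $p$) to gain decay in $|j-j'|$. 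Feeding these into Cotlar--Stein, whose hypothesis demands $\sum_{(j',k')}\|\cdot\|^{1/2} < \infty$, i.e.\ a decay exponent strictly above $1$ in each index, yields $\| p(x,D)\|_{L^2 \to L^2} \le C$ with $C$ controlled by finitely many seminorms of $p$.

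The main obstacle, and really the only place where genuine work is required, is making the ``support overlap plus oscillatory decay'' heuristic rigorous: one must carry out repeated integration by parts in the oscillatory integrals defining the kernels of $T_{j,k}^{*}T_{j',k'}$ and $T_{j,k}T_{j',k'}^{*}$ and verify the decay uniformly in all four indices. The crude count above needs a decay exponent $N>2$, hence three $x$-- and $\xi$--derivatives of $p$, to make the square roots summable. The sharpened assertion for $m=s=0$, that control of $\partial_\xi^\alpha \partial_x^\beta p$ for $\alpha, \beta \le 2$ already suffices, rests on a finer version of these kernel estimates that squeezes summable almost--orthogonality constants out of only two derivatives in each variable; this refined bookkeeping is precisely what is carried out in Theorem $1.6$ of \cite{Kumano-Go}, to which I defer for the optimal constants.
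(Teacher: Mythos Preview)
The paper does not prove this theorem; it is quoted as a standard tool and the reader is simply referred to Theorem~1.6 on page~224 of \cite{Kumano-Go}. There is therefore no in-paper argument to compare your sketch against. Your outline---reduction to $m=s=0$ by conjugation with Bessel potentials $\langle D\rangle^{\pm}$, then the Cotlar--Stein lemma applied to a unit-lattice phase-space decomposition $p=\sum_{j,k}p_{j,k}$---is a correct and standard route, and indeed close to what Kumano-Go does. The reduction step is sound because $\langle\xi\rangle^{s}\in S^{s}_{1,0}\subset S^{s}_{0,0}$ and the composition calculus for $S^{*}_{0,0}$ (the paper's Theorem following the Calder\'on--Vaillancourt statement) gives $q\in S^{0}_{0,0}$ with each seminorm $|q|^{(0)}_{\ell}$ controlled by some $|p|^{(m)}_{\ell'}$. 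Your own deferral to \cite{Kumano-Go} for the sharp derivative count in the $m=s=0$ case matches the paper's citation exactly, so on that point you and the paper agree verbatim.
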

	
	Now we consider the algebra porperties of $S^{m}_{0,0}(\R)$ with respect the composition of operators. Let $p_j \in S^{m_j}_{0,0}(\R)$, $j = 1, 2$, and define 
	\begin{align}\label{eq_symbol_of_composition}
		q(x,\xi) &= Os- \iint e^{-iy \eta} p_1(x,\xi+\eta)p_2(x+y,\xi) dy \dslash\eta \\
		&= \lim_{\varepsilon \to 0} \iint e^{-iy\eta} p_1(x,\xi+\eta)p_2(x+y,\xi) e^{-\varepsilon^2y^2} e^{-\varepsilon^2\eta^2} dy \dslash\eta. \nonumber
	\end{align} 
	We often write $q(x,\xi) = p_1(x,\xi) \circ p_2(x,\xi)$. Then we have the following theorem (for a proof see Lemma $2.4$ on page $69$ and Theorem $1.4$ on page $223$ of \cite{Kumano-Go}).
	
	\begin{theorem}
		Let $p_j \in S^{m_j}_{0,0}(\R)$, $j = 1, 2$, and consider $q$ defined by \eqref{eq_symbol_of_composition}. Then $q \in S^{m_1+m_2}_{0,0}(\R)$ and $q(x,D) = p_1(x,D)p_2(x,D)$. Moreover, the symbol $q$ has the following asymptotic expansion
		\begin{align}\label{eq_asymptotic_expansion_formula}
			q(x,\xi) = \sum_{\alpha < N} \frac{1}{\alpha!} \partial^{\alpha}_{\xi}p_{1}(x,\xi)D^{\alpha}_{x}p_2(x,\xi) + r_N(x,\xi), 
		\end{align} 
		where 
		$$
		r_N(x,\xi) = N\int_{0}^{1} \frac{(1-\theta)^{N-1}}{N!} \, Os - \iint e^{-iy\eta} \partial^{N}_{\xi} p_1(x,\xi+\theta\eta) D^{N}_{x} p_2(x+y,\xi)  dy\dslash\eta \, d\theta,
		$$
		and the seminorms of $r_N$ may be estimated in the following way: for any $\ell_{0} \in \N_0$ there exists $\ell_{1} := \ell_1(\ell_{0}) \in \N_0$ such that 
		$$
		|r_N|^{(m_1+m_2)}_{\ell_0} \leq C_{\ell_{0}} |\partial^{N}_{\xi}p_1|^{(m_1)}_{\ell_{1}} |\partial^{N}_{x}p_2|^{(m_2)}_{\ell_{1}}.
		$$
	\end{theorem}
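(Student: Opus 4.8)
The plan is to follow the classical oscillatory-integral argument (as presented in the monograph cited with the statement). The first task is to give meaning to the symbol $q$ in \eqref{eq_symbol_of_composition}: the amplitude $e^{-iy\eta}p_1(x,\xi+\eta)p_2(x+y,\xi)$ is not absolutely integrable in $(y,\eta)$, so one works with the Gaussian-regularized integrals written in \eqref{eq_symbol_of_composition} and integrates by parts via the identities $\langle y\rangle^{-2\ell}(1-\partial_\eta^2)^{\ell}e^{-iy\eta}=e^{-iy\eta}$ and $\langle\eta\rangle^{-2\ell'}(1-\partial_y^2)^{\ell'}e^{-iy\eta}=e^{-iy\eta}$. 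Transferring the $\eta$-derivatives onto $p_1(x,\xi+\eta)$ (the Gaussian cut-offs only contribute uniformly bounded factors, and differentiation does not change the class $S^{m_1}_{0,0}$) and using the Peetre inequality $\langle\xi+\eta\rangle^{m_1}\leq C_{m_1}\langle\xi\rangle^{m_1}\langle\eta\rangle^{|m_1|}$, then transferring the $y$-derivatives onto $p_2(x+y,\xi)$, one dominates the regularized amplitude, uniformly for $\varepsilon\in(0,1]$, by $C\langle\xi\rangle^{m_1+m_2}\langle y\rangle^{-2\ell}\langle\eta\rangle^{|m_1|-2\ell'}$; choosing $\ell$ and $\ell'$ large this is integrable, so dominated convergence produces a limit independent of the cut-off which satisfies $|q(x,\xi)|\leq C\langle\xi\rangle^{m_1+m_2}$. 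Since $\partial^{\beta}_{x}\partial^{\alpha}_{\xi}$ taken under the integral sign only distributes over $p_1$ and $p_2$ by the chain rule without leaving the corresponding symbol classes, repeating the same estimate gives $|\partial^{\beta}_{x}\partial^{\alpha}_{\xi}q(x,\xi)|\leq C_{\alpha\beta}\langle\xi\rangle^{m_1+m_2}$, so $q\in S^{m_1+m_2}_{0,0}(\R)$, together with the bound $|q|^{(m_1+m_2)}_{\ell_0}\leq C_{\ell_0}|p_1|^{(m_1)}_{\ell_1}|p_2|^{(m_2)}_{\ell_1}$ for a suitable $\ell_1=\ell_1(\ell_0)$.

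For the identity $q(x,D)=p_1(x,D)p_2(x,D)$ I would take $u$ in the Schwartz space, set $v=p_2(x,D)u$, and write $p_1(x,D)v(x)=\int e^{ix\zeta}p_1(x,\zeta)\widehat{v}(\zeta)\,\dslash\zeta$, with $\widehat{v}(\zeta)=\iint e^{iz(\xi-\zeta)}p_2(z,\xi)\widehat{u}(\xi)\,dz\,\dslash\xi$. Substituting, then changing variables $z=x+y$, $\zeta=\xi+\eta$ (this collapses the phase $x\zeta+z\xi-z\zeta$ to $x\xi-y\eta$), and interchanging the order of integration — legitimate once the $(y,\eta)$-integral is read as an oscillatory integral — one arrives at $p_1(x,D)p_2(x,D)u(x)=\int e^{ix\xi}\Big(\iint e^{-iy\eta}p_1(x,\xi+\eta)p_2(x+y,\xi)\,dy\,\dslash\eta\Big)\widehat{u}(\xi)\,\dslash\xi=q(x,D)u(x)$.

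To produce the expansion \eqref{eq_asymptotic_expansion_formula} I would Taylor-expand $p_1(x,\xi+\eta)$ in its second slot about $\eta=0$, namely $p_1(x,\xi+\eta)=\sum_{\alpha<N}\frac{\eta^{\alpha}}{\alpha!}\partial^{\alpha}_{\xi}p_1(x,\xi)+N\sum_{|\alpha|=N}\frac{\eta^{\alpha}}{\alpha!}\int_{0}^{1}(1-\theta)^{N-1}\partial^{\alpha}_{\xi}p_1(x,\xi+\theta\eta)\,d\theta$, and insert this into \eqref{eq_symbol_of_composition}. In a polynomial term one uses $\eta^{\alpha}e^{-iy\eta}=(i\partial_y)^{\alpha}e^{-iy\eta}$ and integrates by parts in $y$, which moves $\partial_y^{\alpha}$ onto $p_2(x+y,\xi)$ — the only remaining $y$-dependence — turning it into $(-i\partial_y)^{\alpha}p_2(x+y,\xi)=(D^{\alpha}_{x}p_2)(x+y,\xi)$; the oscillatory Fourier inversion $\iint e^{-iy\eta}f(y)\,dy\,\dslash\eta=f(0)$ then yields the summand $\frac{1}{\alpha!}\partial^{\alpha}_{\xi}p_1(x,\xi)D^{\alpha}_{x}p_2(x,\xi)$. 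The same integration by parts applied to the Taylor remainder — in which $\partial^{\alpha}_{\xi}p_1(x,\xi+\theta\eta)$ is $y$-independent, hence passes through unchanged — reproduces precisely the stated formula for $r_N$. Finally the seminorm bound on $r_N$ follows by the estimate of the first paragraph: apply $\partial^{\beta}_{x}\partial^{\alpha}_{\xi}$ with $|\alpha|,|\beta|\leq\ell_0$, regularize, integrate by parts in $\eta$ (hitting $\partial^{N}_{\xi}p_1(x,\xi+\theta\eta)$, controlled by Peetre's inequality) and in $y$ (hitting $D^{N}_{x}p_2(x+y,\xi)$), and integrate in $\theta$, which gives $|r_N|^{(m_1+m_2)}_{\ell_0}\leq C_{\ell_0}|\partial^{N}_{\xi}p_1|^{(m_1)}_{\ell_1}|\partial^{N}_{x}p_2|^{(m_2)}_{\ell_1}$ for a suitable $\ell_1=\ell_1(\ell_0)$.

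The genuinely delicate part is not the formal bookkeeping — Taylor expansion, integration by parts, Fourier inversion — but the rigorous treatment of the oscillatory integrals underneath every step: that the Gaussian-regularized integrals converge to a limit independent of the regularizing function, that one may differentiate under the integral sign, and that the Fubini-type interchange of the second paragraph is legitimate. All of this rests on the standard lemmas on oscillatory integrals (existence, the integration-by-parts formulas, and continuous dependence of the integral on its amplitude), worked out in detail in the reference cited alongside the statement; in a complete write-up I would simply invoke them rather than reprove them.
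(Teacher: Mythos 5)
Your proposal is correct and follows essentially the same route as the source the paper relies on: the paper does not prove this theorem but cites Kumano-Go (Lemma 2.4, p.~69 and Theorem 1.4, p.~223), and your sketch --- Gaussian regularization of the oscillatory integral, integration by parts via $\langle y\rangle^{-2\ell}(1-\partial_\eta^2)^{\ell}$ and $\langle \eta\rangle^{-2\ell'}(1-\partial_y^2)^{\ell'}$ together with Peetre's inequality, the Fubini-type computation for $q(x,D)=p_1(x,D)p_2(x,D)$, and the Taylor expansion with integral remainder yielding $r_N$ and its seminorm bound --- is precisely that classical argument. No gaps to flag beyond the oscillatory-integral lemmas you already note you would invoke.
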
 
	
	The last theorem that we recall is the so-called sharp G{\aa}rding inequality. To state this result we need to define the standard H\"ormander classes of symbols.  We say that $p \in S^{m}(\R^{2})$ if $p \in C^{\infty}(\R^2)$ and for any $\alpha, \beta \in \N_0$ there exists $C_{\alpha,\beta} > 0$ such that 
	\begin{equation}\label{eq_def_symbol}
	|\partial^{\alpha}_{\xi} \partial^{\beta}_{x} p(x,\xi)| \leq C_{\alpha,\beta} \langle \xi \rangle^{m-\alpha}.
	\end{equation}
	
	\begin{theorem}\label{theorem_SG_sharp_garding}[sharp G{\aa}rding inequality]
		Let $p \in S^{m}(\R^{2})$. If $Re\, p(x,\xi) \geq 0$ for all $x,\xi \in \R$ then there exists a constant $C > 0$, depending on a finite number of the constants $C_{\alpha,\beta}$ in \eqref{eq_def_symbol}, such that 
		$$
		Re\, \langle p(x,D) u, u \rangle_{L^{2}} \geq -C \| u \|_{H^{\frac{m-1}{2}}}, \quad u \in \mathscr{S}(\R).
		$$
	\end{theorem}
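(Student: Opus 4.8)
The plan is to establish the sharp G{\aa}rding inequality by Friedrichs' symmetrization. The naive idea of writing $\Re p=(\sqrt{\Re p}\,)^{2}$ and quantizing is not available, since $\sqrt{\Re p}$ need not be smooth (let alone a symbol in $S^{m/2}$) at the zeros of $\Re p$; Friedrichs' construction produces instead a genuinely nonnegative operator that agrees with $p(x,D)$ modulo an error of one order lower. The first step is to fix a Gaussian $g\in\mathscr{S}(\R)$ and introduce the phase-space wave packets $\Phi_{y,\eta}(x)=\langle\eta\rangle^{1/4}g(\langle\eta\rangle^{1/2}(x-y))e^{ix\eta}$, $(y,\eta)\in\R^{2}$, concentrated at $(y,\eta)$ at the scale adapted to $S^{m}=S^{m}_{1,0}$, namely of width $\langle\eta\rangle^{-1/2}$ in $x$ and $\langle\eta\rangle^{1/2}$ in $\xi$. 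A Gaussian integral computation then shows that the operator $S:=\int_{\R^{2}}|\Phi_{y,\eta}\rangle\langle\Phi_{y,\eta}|\,\frac{dy\,d\eta}{2\pi}$ is a positive, elliptic pseudodifferential operator of order $0$; denote by $s\in S^{0}$ its symbol, so that $s\geq c>0$ and $1/s\in S^{0}$.

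Given this, I would define the Friedrichs part of $p$ by
\begin{equation*}
p^{F}(x,D)u:=\int_{\R^{2}}\frac{p(y,\eta)}{s(y,\eta)}\,\langle u,\Phi_{y,\eta}\rangle\,\Phi_{y,\eta}\,\frac{dy\,d\eta}{2\pi},\qquad u\in\mathscr{S}(\R).
\end{equation*}
Since $p/s\in S^{m}$ and $\Re(p/s)=\Re p/s\geq 0$, the operator $p^{F}(x,D)$ is a superposition with nonnegative weights of the rank-one nonnegative operators $|\Phi_{y,\eta}\rangle\langle\Phi_{y,\eta}|$, whence
\begin{equation*}
\Re\langle p^{F}(x,D)u,u\rangle_{L^{2}}=\int_{\R^{2}}\frac{\Re p(y,\eta)}{s(y,\eta)}\,|\langle u,\Phi_{y,\eta}\rangle|^{2}\,\frac{dy\,d\eta}{2\pi}\geq 0.
\end{equation*}
The heart of the matter is then to prove that $p^{F}(x,D)-p(x,D)\in\mathrm{Op}(S^{m-1})$. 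For this I would compute the symbol of $p^{F}(x,D)$: it is the average of $p/s$ against the localized kernels attached to the $\Phi_{y,\eta}$; expanding $(p/s)(y,\eta)$ to second order about the centre of that kernel, the zeroth order term reconstructs $(p/s)(x,\xi)\cdot s(x,\xi)=p(x,\xi)$, the first order term vanishes by the symmetry of the Gaussian, and the remainder is a sum of terms of the type $\partial_{x}^{2}(p/s)\cdot\langle\xi\rangle^{-1}$, $\partial_{\xi}^{2}(p/s)\cdot\langle\xi\rangle$ and $\partial_{x}\partial_{\xi}(p/s)$, each of which lies in $S^{m-1}$ because $p/s\in S^{m}_{1,0}$; the seminorms of the remainder are bounded by finitely many of the seminorms of $p$, hence by finitely many of the constants $C_{\alpha,\beta}$ in \eqref{eq_def_symbol}.

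To finish, observe that $\langle D\rangle^{-(m-1)/2}\big(p^{F}(x,D)-p(x,D)\big)\langle D\rangle^{-(m-1)/2}\in\mathrm{Op}(S^{0})$, so by the $L^{2}$-continuity in Theorem~\ref{theorem_Calderon_Vaillancourt} (used with $m=0$, noting $S^{0}_{1,0}\subset S^{0}_{0,0}$) there is $C>0$, depending only on finitely many of the $C_{\alpha,\beta}$, with $|\langle(p^{F}(x,D)-p(x,D))u,u\rangle_{L^{2}}|\leq C\|u\|^{2}_{H^{(m-1)/2}}$. Combining this with the nonnegativity of $p^{F}(x,D)$ gives
\begin{equation*}
\Re\langle p(x,D)u,u\rangle_{L^{2}}=\Re\langle p^{F}(x,D)u,u\rangle_{L^{2}}-\Re\langle(p^{F}(x,D)-p(x,D))u,u\rangle_{L^{2}}\geq -C\|u\|^{2}_{H^{(m-1)/2}},
\end{equation*}
which is the assertion (the argument works verbatim for general order $m$, without first reducing to $m=0$ or $m=1$). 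The main obstacle is the symbol computation in the third step: one must estimate the second-order Taylor remainder of the phase-space average uniformly in $(x,\xi)$ and verify that it belongs to $S^{m-1}$ with seminorms controlled by those of $p$. A secondary, purely computational point is the identification of $S$ as an elliptic positive operator of order $0$ (a Gaussian integral together with elementary estimates), which is what makes it legitimate to divide by $s$.
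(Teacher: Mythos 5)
The paper does not prove Theorem \ref{theorem_SG_sharp_garding} at all: it is recalled as a classical result of the pseudodifferential calculus (the natural source being the same reference \cite{Kumano-Go} used for the other background theorems), so there is no argument of the author's to compare yours against. What you propose is essentially the standard textbook proof by Friedrichs symmetrization / anti-Wick quantization at the scale $\langle\eta\rangle^{1/2}$ adapted to $S^m_{1,0}$, and its skeleton is correct: positivity of $\Re\langle p^F(x,D)u,u\rangle_{L^2}$ is immediate from the rank-one decomposition, the second-order Taylor errors $\partial_x^2\cdot\langle\xi\rangle^{-1}$, $\partial_\xi^2\cdot\langle\xi\rangle$, $\partial_x\partial_\xi$ do have the right order $m-1$, and the final reduction via conjugation by $\langle D\rangle^{-(m-1)/2}$ and Theorem \ref{theorem_Calderon_Vaillancourt} is sound (note you correctly prove the inequality with $\|u\|^2_{H^{(m-1)/2}}$; the exponent $2$ is missing in the paper's statement, evidently a typo). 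Two caveats on the parts you leave as "computational". First, because the wave-packet width varies with $\eta$, the operator $S$ is a Fourier multiplier whose symbol is not exactly constant, and the naive estimate only gives $\xi$-derivative gains of order $\langle\xi\rangle^{-1/2}$, i.e.\ $s\in S^0_{1/2,0}$; if that were all, $p/s$ would only lie in $S^m_{1/2,0}$ and the term $\partial_\xi^2(p/s)\,\langle\xi\rangle$ would no longer be of order $m-1$. One must exploit that for a frozen scale the $(y,\eta)$-superposition is exactly constant (so only the slow variation of $\langle\eta\rangle^{1/2}$ contributes, via moments of $G'$ which integrate to zero) to get $s\in S^0_{1,0}$ elliptic; alternatively, follow Kumano-Go and build the normalization into the partition ($\int q^2\,d\eta=1$), which avoids dividing by $s$ altogether. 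Second, for the same reason the first-order Taylor term does not vanish identically by parity; it is only of order $m-1$, which is enough but must be checked rather than dismissed by symmetry. With these two points carried out, your argument is a complete and faithful reconstruction of the classical proof the paper is invoking.
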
 
	
	Now we return to the proof of Proposition \ref{prop_model_problem}. As we mentioned before, the idea is to argue by contradiction. We start then defining the main ingredients to get the desired contradiction.
	
	Let $\phi \in G^{\theta}(\R)$ determined by 
	\begin{equation}\label{eq_fourier_transform_phi}
	\widehat{\phi}(\xi) = e^{-\rho_0 \langle \xi \rangle^{\frac{1}{\theta}}},
	\end{equation}
	for some $\rho_0 > 0$. We claim that such $\phi$ belongs to $\mathcal{S}^{\theta}_{1}(\R)$. Indeed, in view of Proposition $6.1.7$ of \cite{nicola_rodino_global_pseudo_diffferential_calculus_on_euclidean_spaces} we only need to prove that
	$$
	\phi(x) = \int e^{i\xi x} \widehat{\phi}(\xi) \dslash\xi
	$$
	satisfies $|\phi(x)| \leq C e^{-c|x|}$ for some positive constants $C, c > 0$. To verify that, we integrate by parts to get
	$$
	x^{\beta} \phi(x) = (-1)^{\beta} \int e^{i\xi x} D^{\beta}_{\xi} e^{-\rho_0 \langle \xi\rangle^{\frac{1}{\theta}}} \dslash\xi.
	$$
	Then, Fa\`{a} di Bruno formula and costumary factorial inequalities imply 
	\begin{align*}
		|x^{\beta}\phi(x)| &\leq \int \sum_{j=1}^{\beta} \frac{e^{-\rho_0 \langle \xi \rangle^{\frac{1}{\theta}}}}{j!} \sum_{\overset{\beta_1 +\cdots + \beta_j= \beta}{\beta_\ell \geq 1} } \frac{\beta!}{\beta_1!\ldots \beta_j!} \prod_{\ell=1}^{j} |\rho_0\partial^{\beta_{\ell}}_{\xi} \langle \xi \rangle^{\frac{1}{\theta}}| \dslash\xi \\
		&\leq C^{\beta}_{\rho_0} \beta!, \quad \forall\, x \in \R, \, \beta \in \N_0, 
	\end{align*}
	which allows to conclude our claim. Hence, there exists $\rho = (\rho_1,\rho_2)$, $\rho_1,\rho_2 > 0$ such that $\phi \in H^{0}_{\rho;\theta,s}(\R)$ for all $s \geq 1$.

	Next we take an arbitrary sequence $(\sigma_k)$ of positive real numbers such that $\sigma_k \to \infty$, as $k \to \infty$, and then we define
	$$
	\phi_k(x) = e^{-\rho_2 4^{\frac{1}{s}} \sigma^{\frac{1}{s}}_{k}} \phi(x - 4\sigma_k),\quad k \in \N_0.
	$$ 
	So, $\phi_k \in H^{0}_{\rho;\theta,s}(\R)$ for all $k$ and we have the following estimate:
	\begin{align*}
		\|\phi_k\|^{2}_{H^{0}_{\rho;\theta,s}} &= \int e^{2\rho_2 \langle x \rangle^{\frac{1}{s}}} |e^{\rho_1 \langle D \rangle^{\frac{1}{\theta}}} \phi_k(x)|^{2} dx \\
		&=  \int e^{2\rho_2 \langle x + 4\sigma_{k} \rangle^{\frac{1}{s}}}  e^{-2\rho_2 4^{\frac{1}{s}} \sigma^{\frac{1}{s}}_{k}} |e^{\rho_1 \langle D \rangle^{\frac{1}{\theta}}} \phi(x)|^{2} dx \\
		&\leq \int e^{2\rho_2 \langle x \rangle^{\frac{1}{s}}} |e^{\rho_1 \langle D \rangle^{\frac{1}{\theta}}} \phi(x)|^{2} dx \\
		&= \|\phi\|^{2}_{H^{0}_{\rho;\theta, s}} = \, \text{constant}. \\
	\end{align*} 
	That is, the sequence $(\|\phi_k\|_{H^{0}_{\rho;\theta,s}})$ is uniformly bounded in $k$. 
	
	If the problem \eqref{cauchy_problem_Model} is $\mathcal{S}^{\theta}_{s}$ well-posed, for every $k \in \N_0$ let $u_k \in C^{1}([0,T];H^{0}_{\tilde{\rho};\theta,s}(\R))$, $\tilde{\rho} = (\tilde{\rho}_1,\tilde{\rho}_{2})$ with $\tilde{\rho}_1,\tilde{\rho}_2 > 0$, be the solution of \eqref{cauchy_problem_Model} with initial datum $\phi_k$. From the energy inequality we get
	\begin{equation}\label{fat_girl}
	\|u_{k}(t,\cdot)\|_{L^{2}} \leq \|u_{k}(t,\cdot)\|_{H^{0}_{\tilde{\rho};\theta,s}} \leq C_{T,\rho} \|\phi_k\|_{H^{0}_{\rho;\theta,s}} \leq C_{T,\rho} \|\phi\|_{H^{0}_{\rho;\theta,s}}.
	\end{equation}
	Hence, we conclude that the sequence $(\|u_k(t)\|_{L^2})$ is uniformly bounded with respect to both $k \in \N_0$ and $t \in [0,T]$. 
	
	 For $\theta_h > 1$ close to $1$ we consider a Gevrey cutoff function $h \in G^{\theta_h}_{0}(\R)$ such that 
	 $$
	 h(x) = \begin{cases}
	 	1, \,\, |x| \leq \frac{1}{2}, \\
	 	0, \,\, |x| \geq 1,
	 \end{cases}
	 $$
	 $\widehat{h}(0) > 0$  and $\widehat{h}(\xi) \geq 0$ for all $\xi \in \R$. We also consider the sequence of symbols 
	 \begin{equation}\label{eq_def_localization}
	 	w_{k}(x,\xi) = h\left( \frac{ x-4\sigma_{k} }{ \sigma_{k} } \right) h\left(  \frac{\xi - \sigma_k}{ \frac{1}{4}\sigma_{k} } \right).
	 \end{equation}
	 Finally, for $\lambda \in (0,1)$ to be chosen later, $\theta_1 > \theta_h$ (still close to $1$) and 
	\begin{equation}\label{eq_definition_of_N_k}
		N_k := \lfloor \sigma^{\frac{\lambda}{\theta_1}}_{k} \rfloor = \max \{a \in \N_0 : a \leq \sigma^{\frac{\lambda}{\theta_1}}_{k}\}
	\end{equation}
	we introduce the energy 
	\begin{equation}\label{eq_def_energy}
		E_{k}(t) = \sum_{\alpha \leq N_k, \beta \leq N_k} \frac{1}{(\alpha!\beta!)^{\theta_1}} \| w^{(\alpha\beta)}_{k} (x,D) u_k(t,x) \|_{L^{2}} = \sum_{\alpha \leq N_k, \beta \leq N_k} E_{k,\alpha,\beta}(t),
	\end{equation}
	where 
	\begin{equation*}
		w^{(\alpha\beta)}_{k}(x,\xi) = h^{(\alpha)}\left( \frac{x-4\sigma_{k}}{ \sigma_{k} } \right)
		h^{(\beta)} \left( \frac{\xi - \sigma_k}{ \frac{1}{4}\sigma_{k} } \right).
	\end{equation*}
	
	\begin{remark}\label{remark_about_the_w_k}
		We have $w_k(x,\xi) \in S^{0}_{0,0}(\R)$ and the following estimate
		\begin{equation}\label{eq_estimate_seminomrs_w_k}
		|\partial^{\nu}_{x} \partial^{\gamma}_{\xi} w^{(\alpha\beta)}_{k}(x,\xi)|^{(0)}_{\ell} \leq C^{\alpha+\beta+\gamma+\nu+\ell+1} (\alpha!\beta!\gamma!\nu!\ell!^2)^{\theta_h} \sigma^{-\gamma}_{k} \sigma^{-\nu}_{k},
		\end{equation}
		for some constant $C>0$ independent from $k, \alpha, \beta, \nu$ and $\gamma$. We also remark that on the support of $w_k$ it holds
		$$
		\frac{3\sigma_k}{4} \leq \xi \leq \frac{5\sigma_k}{4} \quad \text{and} \quad 3\sigma_k \leq x \leq 5\sigma_k.
		$$
	 	Thus, on the support of $w_{k}$, $\xi$ is comparable with $\sigma_k$ and $x$ is comparable with $\sigma_{k}$. 
	\end{remark}
	
	The following two propositions will play a key role in our proof. 
	
	\begin{proposition}\label{proposition_upper_bound_energy_E_k}
		Let the Cauchy problem \eqref{cauchy_problem_Model} be $\mathcal{S}^{\theta}_{s}$ well-posed. We then have for all $t \in [0,T]$ and $k \in \N_0$ 
		\begin{equation}\label{eq_estimate_from_above_E_k_alpha_beta}
			E_{k,\alpha,\beta} (t) \leq C^{\alpha+\beta+1} \{\alpha!\beta!\}^{\theta_h-\theta_1},
		\end{equation}
		\begin{equation}\label{eq_estimate_from_above_E_k}
			E_k(t) \leq C,
		\end{equation}
		for some positive constant $C > 0$ indepent from $k$ and $t$.
	\end{proposition}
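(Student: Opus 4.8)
The plan is to prove Proposition \ref{proposition_upper_bound_energy_E_k} directly from the two facts already at our disposal — the uniform $L^2$-bound \eqref{fat_girl} on the solutions $u_k(t,\cdot)$ and the uniform seminorm estimate \eqref{eq_estimate_seminomrs_w_k} on the microlocalizers $w_k^{(\alpha\beta)}$ — without invoking any energy or propagation argument. The point is simply that each $w_k^{(\alpha\beta)}(x,D)$ is, uniformly in $k$, a bounded operator on $L^2(\R)$ whose norm is controlled by a $k$-independent combinatorial factor, so the whole statement reduces to Calder\'on--Vaillancourt plus bookkeeping.

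\emph{Step 1: the bound \eqref{eq_estimate_from_above_E_k_alpha_beta}.} First I would apply the Calder\'on--Vaillancourt theorem (Theorem \ref{theorem_Calderon_Vaillancourt}) in the case $m=s=0$ to $w_k^{(\alpha\beta)}(x,D)$ acting on $u_k(t,\cdot)\in L^2(\R)$, obtaining an absolute constant $C_0>0$ with
\[
\|w_k^{(\alpha\beta)}(x,D)u_k(t,\cdot)\|_{L^2}\le C_0\,|w_k^{(\alpha\beta)}|^{(0)}_{2}\,\|u_k(t,\cdot)\|_{L^2}.
\]
Using \eqref{eq_estimate_seminomrs_w_k} with $\nu=\gamma=0$ and $\ell=2$ yields $|w_k^{(\alpha\beta)}|^{(0)}_{2}\le C^{\alpha+\beta+3}(2!^2\,\alpha!\beta!)^{\theta_h}$, with $C$ independent of $k,\alpha,\beta$, while \eqref{fat_girl} gives $\|u_k(t,\cdot)\|_{L^2}\le C_{T,\rho}\|\phi\|_{H^0_{\rho;\theta,s}}$ uniformly in $k$ and $t$. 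Dividing by $(\alpha!\beta!)^{\theta_1}$ and renaming the constant we obtain
\[
E_{k,\alpha,\beta}(t)=\frac{\|w_k^{(\alpha\beta)}(x,D)u_k(t,\cdot)\|_{L^2}}{(\alpha!\beta!)^{\theta_1}}\le C^{\alpha+\beta+1}\{\alpha!\beta!\}^{\theta_h-\theta_1},
\]
for a constant $C>0$ depending only on $T$, $\rho$ and $\|\phi\|_{H^0_{\rho;\theta,s}}$, hence independent of $k$ and $t$; this is \eqref{eq_estimate_from_above_E_k_alpha_beta}.

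\emph{Step 2: the bound \eqref{eq_estimate_from_above_E_k}.} Then I would sum the previous inequality over the finite index set $\alpha\le N_k$, $\beta\le N_k$ and dominate it by the full double series,
\[
E_k(t)=\sum_{\alpha\le N_k,\,\beta\le N_k}E_{k,\alpha,\beta}(t)\le C\Big(\sum_{\alpha\ge 0}C^{\alpha}(\alpha!)^{\theta_h-\theta_1}\Big)^{2},
\]
the series on the right being convergent precisely because $\theta_1>\theta_h$ forces $(\alpha!)^{\theta_h-\theta_1}$ to decay faster than any geometric sequence grows. Since the resulting bound is independent of $k$ and $t$, this proves \eqref{eq_estimate_from_above_E_k}.

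There is no genuine obstacle here; the argument is essentially accounting. The only thing that actually matters is to verify that \emph{every} constant involved — the Calder\'on--Vaillancourt constant $C_0$, the constant in the seminorm estimate \eqref{eq_estimate_seminomrs_w_k}, and $C_{T,\rho}$ in \eqref{fat_girl} — is independent of $k$ (which is exactly why the scaling in \eqref{eq_def_localization} and the uniform boundedness of $(\|\phi_k\|_{H^0_{\rho;\theta,s}})$ were arranged beforehand), and to keep all combinatorial factors in the normalized form $C^{\alpha+\beta+1}$ so that the strict gap $\theta_1>\theta_h$ can absorb them when summing in Step 2.
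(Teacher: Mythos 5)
Your proposal is correct and follows essentially the same route as the paper: Calder\'on--Vaillancourt (in the $m=s=0$ form) combined with the uniform $L^2$-bound \eqref{fat_girl} and the seminorm estimate \eqref{eq_estimate_seminomrs_w_k} gives \eqref{eq_estimate_from_above_E_k_alpha_beta}, and \eqref{eq_estimate_from_above_E_k} follows by dominating the finite sum by the convergent double series permitted by $\theta_1>\theta_h$. No gaps to report.
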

	\begin{proof}
		Calder\'on-Vaillancourt Theorem, \eqref{fat_girl} and \eqref{eq_estimate_seminomrs_w_k} imply 
		\begin{align*}
			\|w^{(\alpha\beta)}_{k}(x,D)u_k\|_{L^{2}} &\leq  C \|u_k\|_{L^{2}} \max_{\alpha, \beta \leq 2} \sup_{x, \xi \in \R} |\partial^{\alpha}_{\xi} \partial^{\beta}_{x} w_k(x,\xi)|  \\
			&\leq C^{\alpha+\beta+1} \{\alpha!\beta!\}^{\theta_h}.
		\end{align*}
		Hence we immediatily see that \eqref{eq_estimate_from_above_E_k_alpha_beta} holds from the definition of $E_{k,\alpha,\beta}$ (cf. \eqref{eq_def_energy}). To obtain \eqref{eq_estimate_from_above_E_k} we recall that $\theta_1 > \theta_h$ and therefore we obtain 
		\begin{align*}
			E_{k}(t) &= \sum_{\alpha \leq N_k, \beta \leq N_k} E_{k,\alpha,\beta}(t) \\
			&< C \underbrace{\sum_{\alpha,\beta < \infty} C^{\alpha+\beta} \{\alpha!\beta!\}^{\theta_h-\theta_1}}_{=\text{constant}}.
		\end{align*}
	\end{proof}

	\begin{proposition}\label{propostion_lower_bound_derivative_energy}
		Let the Cauchy problem \eqref{cauchy_problem_Model} be $\mathcal{S}^{\theta}_{s}$ well-posed. We then have for all $t \in [0,T]$ and all $k$ sufficiently large
		\begin{align*}
			\partial_t E_{k}(t) \geq \Bigg[ c_1 \sigma^{1-\sigma}_{k} - C \sigma^{\lambda}_{k}\Bigg] E_{k}(t) - C^{N_k+1}\sigma^{C-cN_k}_{k},
		\end{align*}
		for some  positive constants $C, c, c_1 >0$ independent from $k$ and $t$.
	\end{proposition}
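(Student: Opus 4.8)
The plan is to differentiate the summands $E_{k,\alpha,\beta}(t)=(\alpha!\beta!)^{-\theta_1}\|w^{(\alpha\beta)}_k(x,D)u_k(t)\|_{L^2}$ of \eqref{eq_def_energy} and add them up. Writing $\{D_t+D^2_x+i\langle x\rangle^{-\sigma}D_x\}u_k=0$ as $\partial_tu_k=-iD^2_xu_k+\langle x\rangle^{-\sigma}D_xu_k$ and setting $v_{\alpha\beta}:=w^{(\alpha\beta)}_k(x,D)u_k$ (which is $C^1$ in $t$ with values in $L^2$, being smooth and compactly supported in $x$), one has $\partial_tv_{\alpha\beta}=w^{(\alpha\beta)}_k(x,D)(-iD^2_x+\langle x\rangle^{-\sigma}D_x)u_k$, hence, at points where $v_{\alpha\beta}\neq 0$, $\partial_t\|v_{\alpha\beta}\|_{L^2}=\|v_{\alpha\beta}\|_{L^2}^{-1}\Re\langle w^{(\alpha\beta)}_k(x,D)(-iD^2_x+\langle x\rangle^{-\sigma}D_x)u_k,v_{\alpha\beta}\rangle$; the zero set of $v_{\alpha\beta}$ is harmless by the usual regularization with $\sqrt{\|v_{\alpha\beta}\|^2+\varepsilon^2}$, since every error term produced below carries a factor $\|v_{\alpha\beta}\|$. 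Thus it suffices to bound the scalar product from below.

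The contribution of $-iD^2_x$ is an error: $-iD^2_x$ is skew-adjoint, so $\Re\langle-iD^2_xv_{\alpha\beta},v_{\alpha\beta}\rangle=0$, and $[w^{(\alpha\beta)}_k(x,D),-iD^2_x]$ is a finite combination of the operators with symbols $\sigma^{-1}_k\xi\,w^{(\alpha+1,\beta)}_k$ and $\sigma^{-2}_kw^{(\alpha+2,\beta)}_k$; on the frequency support $\xi\sim\sigma_k$ these, after extracting $w^{(\alpha+1,\beta)}_k$, resp. $w^{(\alpha+2,\beta)}_k$, have $k,\alpha,\beta$-uniform $S^0_{0,0}$-seminorms, so Theorem \ref{theorem_Calderon_Vaillancourt} and \eqref{eq_estimate_seminomrs_w_k} bound this part of $\partial_tE_{k,\alpha,\beta}$ below by $-C(\alpha+1)^{\theta_1}E_{k,\alpha+1,\beta}-C\sigma^{2\lambda-2}_k(\alpha+1)^{2\theta_1}E_{k,\alpha+2,\beta}$. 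As $(\alpha+1)^{\theta_1}\le(N_k+1)^{\theta_1}\le C\sigma^\lambda_k$ by \eqref{eq_definition_of_N_k}, summation over $\alpha,\beta\le N_k$ yields $-C\sigma^\lambda_kE_k$ plus boundary terms in which $\alpha\in\{N_k+1,N_k+2\}$, which by Proposition \ref{proposition_upper_bound_energy_E_k} and Stirling are $\le C^{N_k+1}\sigma^{C-cN_k}_k$. Next I would expand $w^{(\alpha\beta)}_k(x,D)\circ(\langle x\rangle^{-\sigma}\xi)(x,D)$ via \eqref{eq_asymptotic_expansion_formula}: the terms $\tfrac1{\gamma!}\partial^\gamma_\xi w^{(\alpha\beta)}_k\,(D^\gamma_x\langle x\rangle^{-\sigma})\xi$ with $1\le\gamma<N_k$ equal $w^{(\alpha,\beta+\gamma)}_k$ times a symbol of size $\le(4C)^\gamma\sigma^{1-\sigma-2\gamma}_k$ with controlled seminorms (each $\partial_\xi$ on $w^{(\alpha\beta)}_k$ costs $\sigma^{-1}_k$, each $\partial_x$ on $\langle x\rangle^{-\sigma}$ costs $\langle x\rangle^{-1}\sim\sigma^{-1}_k$ where $w_k\neq0$), so — now using $(\beta+\gamma)!/\beta!\le(2N_k)^\gamma$ — they contribute to $\partial_tE_k$ at least $-C\sigma^{\lambda-1-\sigma}_kE_k$ plus super-exponentially small boundary terms; and the remainder $r_{N_k}$, estimated by the seminorm bound in \eqref{eq_asymptotic_expansion_formula}, by \eqref{eq_estimate_seminomrs_w_k}, $|\partial^{N_k}_x\langle x\rangle^{-\sigma}|\le C^{N_k}N_k!$ and $\|u_k\|_{H^1(\R)}\le C$ (from \eqref{fat_girl} and $H^0_{\tilde\rho;\theta,s}(\R)\hookrightarrow H^1(\R)$), contributes at least $-C^{N_k+1}(N_k!)^{\theta_h+1}\sigma^{-N_k}_k\ge-C^{N_k+1}\sigma^{C-cN_k}_k$, provided $\lambda$ is small enough that $\lambda(\theta_h+1)<\theta_1$. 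After discarding a further negligible term (the operator with symbol $\langle x\rangle^{-\sigma}D_xw^{(\alpha\beta)}_k$, of order $\sigma^{-1-\sigma}_k$), the principal term equals $\langle Qv_{\alpha\beta},v_{\alpha\beta}\rangle$, where $Q$ is the self-adjoint part of $\langle x\rangle^{-\sigma}D_x$, with principal symbol $\langle x\rangle^{-\sigma}\xi$.

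The decisive point is that $\langle x\rangle^{-\sigma}\xi\ge c_0\sigma^{1-\sigma}_k$ on $\supp w_k$ (where $\xi\sim\sigma_k$ and $x\sim\sigma_k$). Fix once and for all, independently of $\alpha,\beta$, a Gevrey cutoff $\chi_k\in S^0_{0,0}(\R)$ with $0\le\chi_k\le1$, $\chi_k\equiv1$ on $\supp w_k$, $\supp\chi_k\subset\{\langle x\rangle^{-\sigma}\xi\ge c_1\sigma^{1-\sigma}_k\}$ for some small $c_1\le c_0$, and $S^0_{0,0}$-seminorms bounded uniformly in $k$. Write $v_{\alpha\beta}=\chi_k(x,D)v_{\alpha\beta}+R_{\alpha\beta}u_k$ with $R_{\alpha\beta}=(1-\chi_k)(x,D)w^{(\alpha\beta)}_k(x,D)$; the $\sigma_k$-wide separation of $\supp(1-\chi_k)$ from $\supp w^{(\alpha\beta)}_k$ together with the Gevrey symbolic calculus yields $\|R_{\alpha\beta}\|_{L^2\to H^M}\le C^{\alpha+\beta+1}_M(\alpha!\beta!)^{\theta_h}e^{-c\sigma^{1/\theta_h}_k}$ for every $M$, and expanding $\langle Qv_{\alpha\beta},v_{\alpha\beta}\rangle$ along this splitting leaves only cross terms, each with one factor $\|v_{\alpha\beta}\|$ and one factor $\|R_{\alpha\beta}u_k\|$ (or $\|QR_{\alpha\beta}u_k\|$); their total contribution to $\partial_tE_k$ is $\le C^{N_k+1}\sigma^{C-cN_k}_k$ because $\sigma^{1/\theta_h}_k$ dominates $N_k\log\sigma_k$ (here $\tfrac\lambda{\theta_1}<\tfrac1{\theta_h}$ is used). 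Finally $\chi_k(x,D)^*(Q-c_1\sigma^{1-\sigma}_k)\chi_k(x,D)$ has nonnegative principal symbol $\chi^2_k(\langle x\rangle^{-\sigma}\xi-c_1\sigma^{1-\sigma}_k)$, so the sharp G{\aa}rding inequality (Theorem \ref{theorem_SG_sharp_garding}) gives $\langle Q\chi_k(x,D)v_{\alpha\beta},\chi_k(x,D)v_{\alpha\beta}\rangle\ge(c_1\sigma^{1-\sigma}_k-C)\|\chi_k(x,D)v_{\alpha\beta}\|^2$ with $C$ uniform in $k,\alpha,\beta$; since $\sigma_k\to\infty$ and $\|\chi_k(x,D)v_{\alpha\beta}\|^2\ge\|v_{\alpha\beta}\|^2-2\|v_{\alpha\beta}\|\,\|R_{\alpha\beta}u_k\|$, this is $\ge\tfrac{c_1}2\sigma^{1-\sigma}_k\|v_{\alpha\beta}\|^2$ for $k$ large, up to errors already accounted for.

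Putting the pieces together: dividing by $\|v_{\alpha\beta}\|$, weighting by $(\alpha!\beta!)^{-\theta_1}$ and summing over $\alpha,\beta\le N_k$ assembles the above into $\partial_tE_k(t)\ge\big[c_1\sigma^{1-\sigma}_k-C\sigma^\lambda_k\big]E_k(t)-C^{N_k+1}\sigma^{C-cN_k}_k$, as claimed (with $c_1$ renamed). The main obstacle is precisely this bookkeeping: one must keep every invocation of Calder\'on--Vaillancourt and of sharp G{\aa}rding at the level of $S^m_{0,0}$-seminorms that are uniform in $k$ and summable against $(\alpha!\beta!)^{-\theta_1}$ in $\alpha,\beta$ (which forces $\theta_1>\theta_h$, and is why $\chi_k$ must be chosen $(\alpha,\beta)$-independent, so that the sharp G{\aa}rding constant is $k,\alpha,\beta$-uniform), and one must check that the two high-order remainders $r_{N_k}$ and $R_{\alpha\beta}$ — of sizes $(N_k!)^{\theta_h+1}\sigma^{-N_k}_k$ and $e^{-c\sigma^{1/\theta_h}_k}$ — are absorbed by $C^{N_k+1}\sigma^{C-cN_k}_k$; this uses $N_k\le\sigma^{\lambda/\theta_1}_k$ and the smallness of $\lambda$ and of $\theta_1-\theta_h$.
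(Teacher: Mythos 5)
Your strategy is essentially the paper's: differentiate each $E_{k,\alpha,\beta}$, use skew-adjointness of $-iD_x^2$, expand the commutators with $w^{(\alpha\beta)}_k$ to order $N_k$, get positivity of the localized principal part $\langle x\rangle^{-\sigma}\xi\gtrsim\sigma_k^{1-\sigma}$ via sharp G{\aa}rding, and absorb out-of-range indices $\alpha,\beta>N_k$ with Proposition \ref{proposition_upper_bound_energy_E_k}. Two deviations are only presentational: you control the expansion remainder through $\|u_k\|_{H^1}\leq C$ (legitimate, since $H^{0}_{\tilde\rho;\theta,s}\hookrightarrow H^1$ and \eqref{fat_girl} is uniform), where the paper writes $\xi=(\xi+\theta\eta)-\theta\eta$ and integrates by parts to stay with order-zero symbols; and you obtain positivity by sandwiching $Q$ between a cutoff operator $\chi_k(x,D)$ with an exponentially small off-support remainder, where the paper decomposes the symbol as $I_{1,k}+I_{2,k}+I_{3,k}$ (constant part, nonnegative $S^1$ part supported where $x\sim\xi\sim\sigma_k$, disjoint-support part) and applies sharp G{\aa}rding directly to $I_{2,k}$. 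Your variant can be made to work, but note that the support condition $\supp\chi_k\subset\{\langle x\rangle^{-\sigma}\xi\geq c_1\sigma_k^{1-\sigma}\}$ alone does not make the sandwiched symbol lie in $S^1(\R^2)$ with $k$-uniform seminorms (derivatives of $\chi_k$ of size $\sigma_k^{-1}$ do not gain $\langle\xi\rangle^{-1}$ on an unbounded $\xi$-region); you need $\chi_k$ localized to $x\sim\sigma_k$, $\xi\sim\sigma_k$ as in \eqref{eq_cutoff_functions_to_split_the_supports}, and you must also track the composition errors created by the sandwich, which the paper's symbol-level cutoff avoids.

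The genuine gap is your remainder estimate and the hypothesis it forces. You bound $r_{N_k}$ by $C^{N_k+1}(N_k!)^{\theta_h+1}\sigma_k^{-N_k}$ and then require $\lambda(\theta_h+1)<\theta_1$ to absorb it into $C^{N_k+1}\sigma_k^{C-cN_k}$. Since $\theta_1>\theta_h>1$ must be taken close to $1$ (in the application one needs $\lambda/\theta_1>\max\{1/\theta,1/s\}$), this condition effectively restricts $\lambda$ to be below roughly $1/2$, whereas the proposition must hold for the arbitrary $\lambda\in(0,1)$ fixed in \eqref{eq_definition_of_N_k}; in the proof of Proposition \ref{prop_model_problem} one takes $\lambda=\max\{\frac1\theta,\frac1s\}+\tilde\varepsilon$, which can be arbitrarily close to $1$, so your version of the proposition would not suffice there. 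The loss is an artifact of discarding the prefactor $N_k(1-\theta)^{N_k-1}/N_k!$ in the explicit formula for the remainder in \eqref{eq_asymptotic_expansion_formula} (the paper keeps it when estimating $R^{(\alpha\beta)}_k$ and $r^{(\alpha\beta)}_k$). Retaining it replaces $(N_k!)^{\theta_h+1}$ by $N_k!^{\theta_h}N_k$ (the paper records $N_k!^{2\theta_h-1}\sigma_k^{1-N_k}$), and then $N_k!^{\theta_h}\leq\sigma_k^{\lambda\theta_hN_k/\theta_1}$ with $\lambda\theta_h<\theta_h<\theta_1$ automatically, so the term is $\leq C^{N_k+1}\sigma_k^{C-cN_k}$ with no extra condition on $\lambda$ beyond $\lambda<1$ (and $\theta_h$ close enough to $1$). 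With that correction, and the cutoff caveat above, your argument matches the paper's.
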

	
	The proof of Proposition \ref{propostion_lower_bound_derivative_energy} is quite technical and long, so, in order to address the reader as quick as possible to the proof of Proposition \ref{prop_model_problem}, we postpone it to the Subsection \ref{Subsection}.
	
	Given all those preliminaries we are finally ready to prove Proposition \ref{prop_model_problem}.
	
	\begin{proof} 
	Let us denote by 
	$$
	A_k :=  c_1 \sigma^{1-\sigma}_{k} - C \sigma^{\lambda}_{k}, 
	\quad R_k = C^{N+1}\sigma^{C-cN_k}_{k}.
	$$
	If the parameter $\lambda < 1-\sigma$, since $\sigma_k \to \infty$, we obtain for all $k$ sufficiently large
	$$
	A_{k} \geq \frac{c_1}{2} \sigma^{1-\sigma}_{k}.
	$$
	In the next we always shall consider $k$ sufficiently large. Applying Gronwall's inequality to Proposition \ref{propostion_lower_bound_derivative_energy} we obtain
	\begin{align*}
		E_k(t) \geq e^{A_k t} \Bigg\{ E_k(0) - R_k \int^{t}_{0} e^{- A_k \tau} d\tau  \Bigg\}.
	\end{align*}
	Hence, for any fixed $T^* \in (0,T]$,
	\begin{align}\label{eq_almost_there}
		E_k(T^*) \geq e^{T^* \frac{c_1}{2} \sigma^{(p-1)(1-\sigma)}_{k}} \Bigg\{ E_k(0) - T^* R_k  \Bigg\}.
	\end{align}
	
	Now we estimate the terms $R_k$ and $E_k(0)$. Recalling that $N_k = \lfloor \sigma_{k}^{\frac{\lambda}{\theta_1}} \rfloor$ we may estimate $R_k$ from above in the following way
	\begin{equation}\label{eq_almost_there_2}
		R_k \leq C e^{-c \sigma^{\frac{\lambda}{\theta_1}}_{k}}.
	\end{equation}
	To estimate $E_k(0)$, denoting by $\mathcal{F}$ the Fourier transformation, we first write
	\begin{align*}
		E_k(0) &\geq \|w_k(x,D)\phi_k\|_{L^{2}(\R_x)} = \left\| h\left( \frac{x-4\sigma_{k}}{\sigma_{k}} \right) h\left( \frac{D - \sigma_k}{\frac{1}{4}\sigma_{k}} \right) \phi_k \right\|_{L^{2}(\R_x)} \\
		&= e^{-\rho_2 4^{\frac{1}{s}} \sigma^{\frac{1}{s}}_{k}} \left\| h\left( \frac{x-4\sigma_{k}}{\sigma_{k}} \right) h\left( \frac{D - \sigma_k}{\frac{1}{4}\sigma_{k}} \right) \phi(x-4\sigma_k) \right\|_{L^{2}(\R_x)} \\
		&= e^{-\rho_2 4^{\frac{1}{s}} \sigma^{\frac{1}{s}}_{k}} \left\| \mathcal{F} \left[ h \left(\frac{x-4\sigma_{k}}{\sigma_{k}} \right) \right] (\xi) \ast h\left( \frac{\xi-\sigma_k}{\frac{1}{4}\sigma_{k}} \right) e^{-i\xi 4\sigma_{k}}\widehat{\phi}(\xi)  \right\|_{L^{2}(\R_{\xi})} \\
		&= e^{-\rho_2 4^{\frac{1}{s}} \sigma^{\frac{1}{s}}_{k}} \sigma_{k} \left\| e^{-4i\sigma_{k} \xi}  \,\widehat{h}(\sigma_{k} \xi) \ast h\left( \frac{\xi-\sigma_k}{\frac{1}{4}\sigma_{k}} \right) e^{-4i\sigma_{k}\xi} \widehat{\phi}(\xi)  \right\|_{L^{2}(\R_{\xi})}.
	\end{align*}
	Thus
	\begin{equation*}
		E^{2}_k(0) \geq e^{-2\rho_2 4^{\frac{1}{s}}\sigma^{\frac{1}{s}}_{k}} \sigma^{2}_{k} \int_{\R_{\xi}} \left| \int_{\R_\eta} \widehat{h}(\sigma_{k}(\xi - \eta)) h\left( \frac{\eta-\sigma_k}{\frac{1}{4}\sigma_{k}} \right) \widehat{\phi}(\eta) d\eta
		\right|^{2} d\xi.
	\end{equation*}
	Since $\widehat{h}(0) > 0$ and $\widehat{h}(\xi) \geq 0$ for all $\xi \in \R$, we get an estimate from below to $E_k^2(0)$ by restricting the integration domain. Indeed, we set 
	$$
	G_{1,k} = \left[\sigma_k - \frac{\sigma_k}{8}, \sigma_k - \frac{\sigma_k}{8} + \sigma^{-2}_{k}\right] \bigcup \left[\sigma_k + \frac{\sigma_k}{8} - \sigma^{-2}_{k}, \sigma_k + \frac{\sigma_k}{8}\right],
	$$
	$$
	G_{2,k} = \left[\sigma_k - \frac{\sigma_k}{8}-\sigma^{-2}_{k}, \sigma_k - \frac{\sigma_k}{8} + \sigma^{-2}_{k}\right] \bigcup \left[\sigma_k + \frac{\sigma_k}{8} - \sigma^{-2}_{k}, \sigma_k + \frac{\sigma_k}{8}+\sigma^{-2}_{k}\right].
	$$
	Then $|\eta - \sigma_k| \leq 8^{-1}\sigma_k$ for all $\eta \in G_{1,k}$ and $\sigma_{k}|\xi-\eta| \leq 2\sigma^{-1}_{k}$ for all $\eta \in G_{1,k}$ and all $\xi \in G_{2,k}$. So, 
	\begin{align*}
		E^{2}_k(0) &\geq C e^{-2\rho_2 4^{\frac{1}{s}}\sigma^{\frac{1}{s}}_{k}} \sigma^{2}_{k} e^{-2c_{\rho_0} \sigma^{\frac{1}{\theta}}_{k}}\int_{G_{2,k}} \left| \int_{G_{1,k}}  d\eta \right|^{2} d\xi \\
		&= C \sigma^{-4}_{k} e^{-2\rho_2 4^{\frac{1}{s}}\sigma^{\frac{1}{s}}_{k}} e^{-2c_{\rho_{0}} \sigma_{k}^{\frac{1}{\theta}}}.
	\end{align*}
	In this way we get the following estimate
	\begin{align}\label{eq_almost_there_3}
		E_{k}(0) \geq C \sigma^{-2}_{k} e^{-\rho_2 4^{\frac{1}{s}}\sigma^{\frac{1}{s}}_{k}} e^{-c_{\rho_{0}} \sigma_{k}^{\frac{1}{\theta}}}.
	\end{align}
	
	From \eqref{eq_almost_there}, \eqref{eq_almost_there_2} and \eqref{eq_almost_there_3} we conclude 
	\begin{align}\label{estimate_from_below_E_k}
		E_k(T^*) \geq C e^{T^* \frac{c_1}{2} \sigma_{k}^{1-\sigma}} \left[ \sigma^{-2}_{k} e^{-\rho_2 4^{\frac{1}{s}}\sigma^{\frac{1}{s}}_{k}} e^{-c_{\rho_{0}} \sigma_{k}^{\frac{1}{\theta}}} - T^*e^{-c \sigma^{\frac{\lambda}{\theta_1}}_{k}} \right], \quad \text{for all}\,\, T^{*} \in (0,T],
	\end{align}
	provided that $\lambda < 1-\sigma$ and $k$ being sufficiently large.
	
	Now assume by contradiction that $\max\{\frac{1}{\theta},\frac{1}{s}\} < 1-\sigma$. Then we set $\lambda = \max\{\frac{1}{\theta},\frac{1}{s}\}+\tilde{\varepsilon}$ with $\tilde{\varepsilon} > 0$ small so that $\lambda < 1-\sigma$. After that, we take $\theta_1$ very close to $1$ to get $\frac{\lambda}{\theta_1} > \max\{\frac{1}{\theta},\frac{1}{s}\}$. Hence \eqref{estimate_from_below_E_k} implies
		\begin{align*}
			E_k(T^*) \geq C_2 e^{\tilde{c}_0 \sigma_{k}^{1-\sigma}} e^{-\tilde{c}_{\rho_0,\rho_2} \sigma^{\max\{\frac{1}{\theta},\frac{1}{s}\}}_{k} }.
		\end{align*}
		Thus
		\begin{align*}
			E_k(T^*) \geq C_3 e^{\frac{\tilde{c}_0}{2} \sigma_{k}^{1-\sigma}}.
		\end{align*}
		The latter inequality provides a contradiction because Proposition \ref{proposition_upper_bound_energy_E_k} provides a uniform upper bound for $E_k(t)$ for all $t$ and $k$.
	\end{proof} 
	
	\subsection{Proof of Proposition \ref{propostion_lower_bound_derivative_energy}}\label{Subsection}
	
	For sake of brevity we denote
	$$
	v^{(\alpha\beta)}_{k}(t,x) = w^{(\alpha\beta)}_{k}(x,D)u_k(t,x).
	$$
	Then we have 
	$$
	Sv^{(\alpha\beta)}_{k} = S w^{(\alpha\beta)}_{k} u_k = w^{(\alpha\beta)}_{k} \underbrace{S u_k}_{= 0} + [S, w^{(\alpha\beta)}_{k} ]u_k = [S, w^{(\alpha\beta)}_{k} ]u_k =: f^{(\alpha\beta)}_{k}.
	$$
	To obtain an estimate from below for $\partial_{t}E_{k}$ we observe
	\begin{align}\label{eq_energy_method}
		\|v^{(\alpha\beta)}_{k}\|_{L^{2}(\R)} \partial_t \|v^{(\alpha\beta)}_{k}\|_{L^{2}(\R)} &= \frac{1}{2}\partial_t \{ \| v^{(\alpha\beta)}_{k} \|^{2}_{L^{2}(\R)} \} 
		= \text{Re}\, \langle \partial_t v^{(\alpha\beta)}_{k}, v^{(\alpha\beta)}_{k} \rangle_{L^{2}(\R)} \nonumber \\
		&=\text{Re}\, \langle iSv^{(\alpha\beta)}_{k}, v^{(\alpha\beta)}_{k} \rangle_{L^{2}(\R)} + \text{Re}\, \langle \langle x \rangle^{-\sigma} D_{x} v^{(\alpha\beta)}_{k}, v^{\alpha\beta}_{k} \rangle_{L^{2}(\R)} \nonumber \\
		&\geq - \| f^{(\alpha\beta)}_{k}\|_{L^{2}(\R)} \| v^{(\alpha\beta)}_{k} \|_{L^{2}(\R)} + \text{Re}\, \langle \langle x \rangle^{-\sigma} D_{x} v^{(\alpha\beta)}_{k}, v^{\alpha\beta}_{k} \rangle_{L^{2}(\R)}.
	\end{align} 
	Next, our idea is to estimate $\text{Re}\, \langle \langle x \rangle^{-\sigma} D_{x} v^{(\alpha\beta)}_{k}, v^{(\alpha\beta)}_{k} \rangle_{L^2(\R)}$ from below and $\| f^{(\alpha\beta)}_{k}\|_{L^{2}(\R)}$ from above. In the following we shall derive such estimates. 
	
	From now and on we denote by $C$ a possibly large positive constant which do not depend on $\alpha,\beta, N_k$ and $k$. 
	
	\subsubsection{Estimate from below to $Re\, \langle \langle x \rangle^{-\sigma} D_{x} v^{(\alpha\beta)}_{k} , v^{(\alpha\beta)}_{k} \rangle_{L^{2}(\R)}$}
	We first consider the following cutoff functions 
	\begin{equation}\label{eq_cutoff_functions_to_split_the_supports}
		\chi_{k}(\xi) = h \left( \frac{\xi - \sigma_k}{ \frac{3}{4}\sigma_{k}} \right), \quad \psi_{k}(x) = h \left( \frac{x-4\sigma_{k}}{3\sigma_{k}} \right).
	\end{equation}
	On the support of $\psi_{k}(x) \chi_{k}(\xi)$ we have for all $k \in \N_0$
	$$
	\frac{\sigma_k}{4} \leq \xi \leq \frac{7\sigma_k}{4} \quad \text{and} \quad \sigma_{k} \leq x \leq 7 \sigma_{k}.
	$$ 
	Therefore 
	$$
	\xi \langle x \rangle^{-\sigma} \geq \frac{7^{-\sigma}}{4} \langle \sigma_{k} \rangle^{-\sigma} \sigma_k,
	$$
	for every $(x,\xi) \in \supp \psi_{k}(x) \chi_{k}(\xi)$.
	
	Denoting $c_0 = \frac{7^{-\sigma}}{4^{p-1}}$, we decompose the symbol of $\langle x \rangle^{-\sigma} D_{x}$ as follows 
	\begin{align*}
		\langle x \rangle^{-\sigma} \xi &= c_0\langle \sigma_{k} \rangle^{-\sigma} \sigma_{k} +  \langle x \rangle^{-\sigma} \xi - c_0\langle \sigma_{k} \rangle^{-\sigma} \sigma_{k} \\
		&= c_0\langle \sigma_{k} \rangle^{-\sigma} \sigma_{k} +  \{ \langle x \rangle^{-\sigma} \xi - c_0\langle \sigma_{k} \rangle^{-\sigma} \sigma_{k}\}\psi_{k}(x) \chi_{k}(\xi) \\
		&+ \{ \langle x \rangle^{-\sigma} \xi - c_0\langle \sigma_{k} \rangle^{-\sigma} \sigma_{k}\}\{1-\psi_{k}(x)\chi_{k}(\xi)\} \\
		&= I_{1,k} + I_{2,k}(x,\xi) + I_{3,k}(x,\xi).
	\end{align*}
	In the following we explain how to estimate $Re\, \langle I_{\ell,k}(x,D) v^{(\alpha\beta)}_{k}, v^{(\alpha\beta)}_{k} \rangle_{L^{2}(\R)}$, $\ell = 1, 2, 3$.
	\smallskip
	
	\noindent-$Re\, \langle I_{1,k} v^{(\alpha\beta)}_{k}, v^{(\alpha\beta)}_{k} \rangle_{L^{2}(\R)}$.
	For $I_{1,k}$ we simply have
	\begin{align}\label{eq_estimate_I_1_k}
		\text{Re}\, \langle I_{1,k} v^{(\alpha\beta)}_{k}, v^{(\alpha\beta)}_{k} \rangle_{L^2(\R)} &= c_0\langle \sigma_{k} \rangle^{-\sigma} \sigma_{k} \|v^{(\alpha\beta)}_{k}\|^{2}_{L^2(\R)} \nonumber \\
		&\geq c_0 2^{-\frac{\sigma}{2}} \sigma^{1-\sigma}_{k} \|v^{(\alpha\beta)}_{k}\|^{2}_{L^2(\R)}.
	\end{align}
	\smallskip
	
	\noindent-$Re\, \langle I_{2,k}(x,D) v^{(\alpha\beta)}_{k}, v^{(\alpha\beta)}_{k} \rangle_{L^{2}(\R)}$.
	We have that $I_{2,k}$ belongs to $S^{1}(\R^{2})$ with uniform symbol estimates with respect to $k$. Indeed, since $x$ and $\xi$ are comparable with $\sigma_{k}$ on the support of $\psi_k(x)\chi_k(\xi)$ we obtain 
	\begin{align*}
		|\partial^{\gamma}_{\xi}\partial^{\nu}_{x} I_{2,k}(x,\xi)| &\leq \sum_{\overset{\gamma_1+\gamma_2 = \gamma}{\nu_1+\nu_2 = \nu}} \frac{\gamma!\nu!}{\gamma_1!\gamma_2!\nu_1!\nu_2!} 
		|\partial^{\gamma_1}_{\xi} \partial^{\nu_1}_{x}\{ \langle x \rangle^{-\sigma} \xi - c_0\langle \sigma_{k} \rangle^{-\sigma} \sigma_{k}\}|
		|\partial^{\nu_2}_{x}\psi_{k}(x) \partial^{\gamma_2}_{\xi}\chi_{k}(\xi)| \\
		&\leq \sum_{\overset{\gamma_1+\gamma_2 = \gamma}{\nu_1+\nu_2 = \nu}} \frac{\gamma!\nu!}{\gamma_1!\gamma_2!\nu_1!\nu_2!} C^{\gamma_1+\nu_1+1} \gamma_1!\nu_1! \langle \xi \rangle^{1 - \gamma_1} \langle x \rangle^{-\sigma-\nu_1} C^{\gamma_2+\nu_2+1} \gamma_2!^{\theta_h} \nu_2!^{\theta_h} \sigma_k^{-\gamma_2} \sigma^{-\nu_2}_{k} \\
		&\leq C^{\gamma+\nu+1} \{\gamma!\nu!\}^{\theta_h} \langle \xi \rangle^{1-\gamma} \langle x \rangle^{-\sigma-\nu}.
	\end{align*} 
	Besides, from the choice of $c_0$ we easily conclude that $I_{2,k}(x.\xi) \geq 0$ for all $x, \xi \in \R$. Sharp-G{\aa}rding inequality then gives
	\begin{equation}\label{eq_estimate_I_2_k}
		Re\, \langle I_{2,k}(x,D) v^{(\alpha\beta)}_{k}, v^{(\alpha\beta)}_{k} \rangle_{L^{2}(\R)} \geq -C\|v^{(\alpha\beta)}_{k}\|^{2}_{L^{2}(\R)}.	
	\end{equation}	
	\smallskip
	
	\noindent-$Re\, \langle I_{3,k}(x,D) v^{(\alpha\beta)}_{k}, v^{(\alpha\beta)}_{k} \rangle_{L^{2}(\R)}$.
	Since the supports of $w^{(\alpha\beta)}_{k}(x,\xi)$ and of $1-\psi_{k}(x)\chi_{k}(\xi)$ are disjoint, applying formula \eqref{eq_asymptotic_expansion_formula} for $N := N_k$ (cf \eqref{eq_definition_of_N_k}) we may write
	\begin{align*}
		I_{3,k}(x,\xi) \circ w^{(\alpha\beta)}_{k}(x,\xi) = R^{(\alpha\beta)}_{k}(x,\xi), 
	\end{align*} 
	where 
	\begin{align*}
		R^{(\alpha\beta)}_{k}(x,\xi) = N_k\int_{0}^{1} \frac{(1-\theta)^{N_k-1}}{N_k!} \, Os - \iint e^{-iy\eta} \partial^{N_k}_{\xi} I_{3,k}(x,\xi+\theta\eta) D^{N_k}_{x}w^{(\alpha\beta)}_{k}(x+y,\xi)  dy\dslash\eta \, d\theta.
	\end{align*}
	The seminorms of $R_{k}$ can be estimated in the following way: for every $\ell_0 \in \N_0$ there exists $\ell_1 = \ell_1(\ell_0)$ such that  
	\begin{align*}
		|R_{k}|^{(0)}_{\ell_0} \leq C(\ell_0) \frac{N_k}{N_k!} |\partial^{N_k}_{\xi} I_{3,k}|^{(0)}_{\ell_1} |\partial^{N_k}_{x}w^{(\alpha\beta)}_{k}|^{(0)}_{\ell_1}.
	\end{align*}
	From Remark \ref{remark_about_the_w_k} we get
	$$
	|\partial^{N_k}_{x}w^{(\alpha\beta)}_{k}|^{(0)}_{\ell_1} \leq C^{\ell_1+\alpha+\beta+N_k+1} \ell_1!^{2\theta_h} \{\alpha!\beta!N_k!\}^{\theta_h} \sigma^{-N_k}_{k}.
	$$
	On the other hand, since there is no harm into assuming $N_k \geq 2$ (because $\sigma_k \to +\infty$), we have
	\begin{align*}
		\partial^{N_k}_{\xi} I_{3,k}(x,\xi) &= -\sum_{\overset{N_1+N_2=N_k}{N_1 \leq 1}} \frac{N_k!}{N_1!N_2!} \partial^{N_1}_{\xi}\{ \langle x \rangle^{-\sigma} \xi - c_0\langle \sigma_{k} \rangle^{-\sigma} \sigma_{k}\} \psi_{k}(x)\partial^{N_2}_{\xi}\chi_{k}(\xi),
	\end{align*}
	hence 
	$$
	|\partial^{N_k}_{\xi}I_3(x,\xi)|^{(0)}_{\ell_1} \leq C^{\ell_1+N_k+1}\ell_{1}!^{2\theta_h}N_k!^{\theta_h} \sigma^{1-N_k}_{k}.
	$$
	Therefore 
	$$
	|R^{(\alpha\beta)}_{k}|^{(0)}_{\ell_0} \leq C^{\alpha+\beta+N_k+1} \{\alpha!\beta!\}^{\theta_h} N_k!^{2\theta_h-1} \sigma^{1-2N_k}_{k},
	$$
	which allow us to conclude
	\begin{align*}
		\|I_{3,k}(x,D) v^{(\alpha\beta)}_{k}\|_{L^{2}(\R)} \leq C^{\alpha+\beta+N_k+1} \{\alpha!\beta!\}^{\theta_h} N_k!^{2\theta_h-1} \sigma^{1-2N_k}_{k} \| u_k\|_{L^{2}(\R)}.
	\end{align*}
	So, from \eqref{fat_girl} we get
	\begin{align}\label{eq_estimate_I_3_k}
		Re\, \langle I_{3,k}(x,D) v^{(\alpha\beta)}_{k}, v^{(\alpha\beta)}_{k} \rangle_{L^{2}(\R)} \geq 
		-C^{\alpha+\beta+N_k+1} \{\alpha!\beta!\}^{\theta_h} N_k!^{2\theta_h-1} \sigma^{1-2N_k}_{k} \|v^{(\alpha\beta)}_{k}\|_{L^{2}(\R)}.
	\end{align}
	
	From \eqref{eq_estimate_I_1_k}, \eqref{eq_estimate_I_2_k} and \eqref{eq_estimate_I_3_k} and using that $\sigma_k \to +\infty$ we obtain the following lemma.
	
	\begin{lemma}\label{lemma_estimate_that_gives_contradiction}
		Let the Cauchy problem \eqref{cauchy_problem_Model} be $\mathcal{S}^{\theta}_{s}$ well-posed. Then for all $k$ sufficiently large it holds 
		\begin{align*}
			Re\, \langle \langle x \rangle^{-\sigma} D_{x} v^{(\alpha\beta)}_{k} , v^{(\alpha\beta)}_{k} \rangle_{L^{2}(\R)} &\geq c_1 \sigma^{1-\sigma}_{k} \|v^{(\alpha\beta)}_{k}\|^{2}_{L^{2}(\R)} \\
			&-C^{\alpha+\beta+N_k+1} \{\alpha!\beta!\}^{\theta_h} N_k!^{2\theta_h-1} \sigma^{1-2N_k}_{k} \|v^{(\alpha\beta)}_{k}\|_{L^{2}(\R)},	
		\end{align*}
		for some $c_1 > 0$ independent from $k, \alpha, \beta$ and $N_k$.
	\end{lemma}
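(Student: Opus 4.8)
The plan is to obtain the lemma simply by summing the three contributions that have already been isolated. I would start from the symbol-level identity $\langle x \rangle^{-\sigma}\xi = I_{1,k} + I_{2,k}(x,\xi) + I_{3,k}(x,\xi)$ introduced above and note that, since the left quantization $p\mapsto p(x,D)$ is linear and $I_{1,k}$ is a positive constant, this lifts to the operator identity $\langle x \rangle^{-\sigma}D_x = I_{1,k} + I_{2,k}(x,D) + I_{3,k}(x,D)$. Pairing both sides with $v^{(\alpha\beta)}_k$ in $L^2(\R)$ and taking real parts then gives, with no cross terms,
\[
\mathrm{Re}\,\langle \langle x \rangle^{-\sigma}D_x v^{(\alpha\beta)}_k, v^{(\alpha\beta)}_k\rangle_{L^2(\R)} = \sum_{\ell=1}^{3}\mathrm{Re}\,\langle I_{\ell,k}(x,D) v^{(\alpha\beta)}_k, v^{(\alpha\beta)}_k\rangle_{L^2(\R)},
\]
so it suffices to insert \eqref{eq_estimate_I_1_k}, \eqref{eq_estimate_I_2_k} and \eqref{eq_estimate_I_3_k} termwise.

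For $\ell=1$ the term is exactly the scalar $c_0\langle\sigma_k\rangle^{-\sigma}\sigma_k\|v^{(\alpha\beta)}_k\|^2_{L^2(\R)}$, which, using $\langle\sigma_k\rangle\leq\sqrt2\,\sigma_k$, is bounded below by $c_0 2^{-\sigma/2}\sigma^{1-\sigma}_k\|v^{(\alpha\beta)}_k\|^2_{L^2(\R)}$: this is the positive main term. For $\ell=2$ I would invoke the sharp G{\aa}rding inequality (Theorem \ref{theorem_SG_sharp_garding}) applied to $I_{2,k}\in S^1(\R^2)$, which is $\geq 0$ by the choice of $c_0$; the only point to stress is that the symbol computation carried out above bounds the seminorms of $I_{2,k}$ \emph{uniformly in $k$}, so the G{\aa}rding constant does not depend on $k$, and since $m=1$ forces $H^{(m-1)/2}=L^2$ the contribution is $\geq -C\|v^{(\alpha\beta)}_k\|^2_{L^2(\R)}$. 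For $\ell=3$ I would use the disjoint-support argument: because $\supp w^{(\alpha\beta)}_k$ lies in the interior of the region where $\psi_k\chi_k\equiv 1$, every term of the length-$N_k$ expansion \eqref{eq_asymptotic_expansion_formula} of $I_{3,k}\circ w^{(\alpha\beta)}_k$ vanishes identically, leaving only the remainder $R^{(\alpha\beta)}_k$; its seminorm estimate recalled above, together with Calder\'on--Vaillancourt (Theorem \ref{theorem_Calderon_Vaillancourt}) and the $L^2$ bound \eqref{fat_girl} on $u_k$, reproduces the negative term $-C^{\alpha+\beta+N_k+1}\{\alpha!\beta!\}^{\theta_h}N_k!^{2\theta_h-1}\sigma^{1-2N_k}_k\|v^{(\alpha\beta)}_k\|_{L^2(\R)}$.

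Adding the three, the lower bound takes the form
\[
\big(c_0 2^{-\sigma/2}\sigma^{1-\sigma}_k - C\big)\|v^{(\alpha\beta)}_k\|^2_{L^2(\R)} - C^{\alpha+\beta+N_k+1}\{\alpha!\beta!\}^{\theta_h}N_k!^{2\theta_h-1}\sigma^{1-2N_k}_k\|v^{(\alpha\beta)}_k\|_{L^2(\R)}.
\]
Since $1-\sigma>0$ and $\sigma_k\to+\infty$, for all $k$ large we have $c_0 2^{-\sigma/2}\sigma^{1-\sigma}_k - C\geq\tfrac12 c_0 2^{-\sigma/2}\sigma^{1-\sigma}_k$, so the claim follows with $c_1:=\tfrac12 c_0 2^{-\sigma/2}$; nothing more is done to the remainder at this stage, since $\sigma^{1-2N_k}_k$ is already a negative power of $\sigma_k$ once $N_k\geq 1$, and its reabsorption is deferred to the proofs of Proposition \ref{propostion_lower_bound_derivative_energy} and Proposition \ref{prop_model_problem}.

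As for where the difficulty sits: the lemma itself is bookkeeping once \eqref{eq_estimate_I_1_k}--\eqref{eq_estimate_I_3_k} are in hand, and the only subtlety I would be careful about is the uniformity in $k$ of the G{\aa}rding and Calder\'on--Vaillancourt constants, which holds because $x$ and $\xi$ are both comparable to $\sigma_k$ on the supports involved, so all the relevant seminorms scale correctly in $\sigma_k$. If I had to name the genuinely delicate step behind the lemma, it is estimate \eqref{eq_estimate_I_3_k}: squeezing the factor $\sigma^{-2N_k}_k$ out of the disjoint supports via the long asymptotic expansion, since it is precisely this gain that later leaves room to swallow the exponentially large constant $C^{N_k}$ coming from the energy $E_k$.
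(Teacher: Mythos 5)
Your proposal is correct and follows the paper's own route: the paper likewise obtains the lemma by summing the three contributions from the decomposition $\langle x\rangle^{-\sigma}\xi = I_{1,k}+I_{2,k}+I_{3,k}$, using \eqref{eq_estimate_I_1_k}, the sharp G{\aa}rding bound \eqref{eq_estimate_I_2_k} with $k$-uniform seminorms, and the disjoint-support remainder estimate \eqref{eq_estimate_I_3_k}, then absorbing the $-C\|v^{(\alpha\beta)}_k\|^2_{L^2}$ term into the main term for large $k$ since $\sigma_k\to\infty$ and $1-\sigma>0$ in the contradiction setting. No gaps; your remarks on uniformity in $k$ and on the vanishing of all expansion terms except the remainder match the paper's argument.
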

	
	\subsubsection{Estimate from above to $f^{(\alpha\beta)}_{k}$} We start recalling that
	$$
	f^{(\alpha\beta)}_{k} = [S, w^{(\alpha\beta)}_{k}]u_k = [D_t + D^{2}_x, w^{(\alpha\beta)}_{k}]u_k  + [i\langle x \rangle^{-\sigma}D_{x}, w^{(\alpha\beta)}_{k}]u_k.
	$$
	In the sequel we explain how to estimate the above brackets, but first we need the following lemma.
	
	\begin{lemma}\label{lemma_useful_estimate}
			Let the Cauchy problem \eqref{cauchy_problem_Model} be $\mathcal{S}^{\theta}_{s}$ well-posed. Then, for any $N = 1, 2, 3 \ldots$ we have
		\begin{equation*}
				\|D_{x} v^{(\alpha\beta)}_{k}\|_{L^{2}(\R)} \leq C \sigma_{k}\|v^{(\alpha\beta)}_{k}\|_{L^{2}(\R)} + C^{\alpha+\beta+N+1}\{\alpha!\beta!\}^{\theta_h} N!^{2\theta_h - 1} \sigma^{1-2N}_{k}.
		\end{equation*}
	\end{lemma}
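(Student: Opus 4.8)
The plan is to trade the derivative $D_x$ for a factor $\sigma_k$, exploiting that on $\supp w^{(\alpha\beta)}_k$ the covariable $\xi$ is comparable with $\sigma_k$ (Remark \ref{remark_about_the_w_k}), and to show that all intermediate errors cancel except a genuine remainder of the announced size. Since $D_x$ is a first order operator, the exact symbolic calculus gives, with no approximation,
\[
D_x v^{(\alpha\beta)}_k \;=\; \big(\xi\, w^{(\alpha\beta)}_k\big)(x,D)u_k \;+\; \big(D_x w^{(\alpha\beta)}_k\big)(x,D)u_k ,
\]
and, by the explicit form \eqref{eq_def_localization} of $w_k$, $D_x w^{(\alpha\beta)}_k=-i\sigma_k^{-1}w^{(\alpha+1,\beta)}_k$, so that the second summand equals $-i\sigma_k^{-1}v^{(\alpha+1,\beta)}_k$.

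Next I would introduce, using the very function $h$ fixed before \eqref{eq_def_localization},
\[
\zeta_k(\xi)\;:=\;\frac{\xi}{\sigma_k}\,h\!\left(\frac{\xi-\sigma_k}{\tfrac12\sigma_k}\right)+\left(1-h\!\left(\frac{\xi-\sigma_k}{\tfrac12\sigma_k}\right)\right).
\]
Since $h\equiv1$ on $[-\tfrac12,\tfrac12]$ and $\supp_\xi w^{(\alpha\beta)}_k\subseteq\{|\xi-\sigma_k|\le\tfrac14\sigma_k\}$, one has $\zeta_k(\xi)=\xi/\sigma_k$ on $\supp_\xi w^{(\alpha\beta)}_k$; consequently $\sigma_k\zeta_k(\xi)\,w^{(\alpha\beta)}_k(x,\xi)=\xi\,w^{(\alpha\beta)}_k(x,\xi)$ as functions, and moreover $\partial_\xi\zeta_k\equiv\sigma_k^{-1}$ and $\partial_\xi^\gamma\zeta_k\equiv0$ (for $\gamma\ge2$) on $\supp_\xi w^{(\alpha\beta)}_k$. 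Since the only scale entering $\zeta_k$ is $\sigma_k$, one checks that $\zeta_k\in S^{0}_{0,0}(\R)$ with seminorms bounded independently of $k$, with the more precise bound $|\partial_\xi^\gamma\zeta_k|^{(0)}_\ell\le C^{\gamma+\ell+1}(\gamma!\,\ell!^2)^{\theta_h}\sigma_k^{-\gamma}$ for $\gamma\ge1$. Now I would apply the composition theorem to $\zeta_k(x,D)\circ w^{(\alpha\beta)}_k(x,D)$ with the asymptotic expansion \eqref{eq_asymptotic_expansion_formula} truncated at order $N$: the terms with $\gamma\ge2$ vanish identically, because $\partial_\xi^\gamma\zeta_k$ is zero on the $\xi$-support of $D_x^\gamma w^{(\alpha\beta)}_k$, while the $\gamma=1$ term equals $\partial_\xi\zeta_k\,D_x w^{(\alpha\beta)}_k=-i\sigma_k^{-2}w^{(\alpha+1,\beta)}_k$. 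This yields
\[
\big(\xi\, w^{(\alpha\beta)}_k\big)(x,D)u_k=\sigma_k\,\zeta_k(x,D)v^{(\alpha\beta)}_k+i\sigma_k^{-1}v^{(\alpha+1,\beta)}_k-\sigma_k\,r_N(x,D)u_k .
\]

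Adding the second summand of the first display, the terms $\pm i\sigma_k^{-1}v^{(\alpha+1,\beta)}_k$ cancel and we are left with the clean identity
\[
D_x v^{(\alpha\beta)}_k=\sigma_k\,\zeta_k(x,D)v^{(\alpha\beta)}_k-\sigma_k\,r_N(x,D)u_k .
\]
For the first term, since the $S^0_{0,0}$-seminorms of $\zeta_k$ are bounded uniformly in $k$, the Calder\'on--Vaillancourt theorem gives $\|\sigma_k\zeta_k(x,D)v^{(\alpha\beta)}_k\|_{L^2}\le C\sigma_k\|v^{(\alpha\beta)}_k\|_{L^2}$. For the remainder I would combine the integral representation of $r_N$ (which carries a factor $(N!)^{-1}$), the remainder seminorm estimate of the composition theorem, the bound $|\partial_\xi^N\zeta_k|^{(0)}_{\ell_1}\le C^{N+\ell_1+1}(N!\,\ell_1!^2)^{\theta_h}\sigma_k^{-N}$, and \eqref{eq_estimate_seminomrs_w_k} for $|\partial_x^N w^{(\alpha\beta)}_k|^{(0)}_{\ell_1}$ (recall $\partial_x^N w^{(\alpha\beta)}_k=\sigma_k^{-N}w^{(\alpha+N,\beta)}_k$), getting $|r_N|^{(0)}_{\ell_0}\le C^{\alpha+\beta+N+1}\{\alpha!\beta!\}^{\theta_h}N!^{2\theta_h-1}\sigma_k^{-2N}$; applying Calder\'on--Vaillancourt once more and the uniform bound $\|u_k(t,\cdot)\|_{L^2}\le C$ from \eqref{fat_girl} then gives $\|\sigma_k r_N(x,D)u_k\|_{L^2}\le C^{\alpha+\beta+N+1}\{\alpha!\beta!\}^{\theta_h}N!^{2\theta_h-1}\sigma_k^{1-2N}$, which finishes the proof. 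I expect the two delicate points to be, first, setting up $\zeta_k$ so that the cancellation of the $v^{(\alpha+1,\beta)}_k$-terms is exact (this forces the precise normalisations above), and, second, the uniform-in-$k$ bookkeeping of $r_N$: one must verify that the Gevrey factors $N!^{\theta_h}$ produced by the derivatives of $h$ together with the $(N!)^{-1}$ from $r_N$ leave exactly $N!^{2\theta_h-1}$, and that the two separate gains $\sigma_k^{-N}$ (one from $\partial_\xi^N\zeta_k$, one from $\partial_x^N w^{(\alpha\beta)}_k$) produce the decay $\sigma_k^{-2N}$, with all constants independent of $\alpha,\beta,N$ and $k$.
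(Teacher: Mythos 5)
Your argument is correct and rests on the same toolkit as the paper (Calder\'on--Vaillancourt, the composition theorem with its remainder bound used where supports are disjoint, the seminorm estimates \eqref{eq_estimate_seminomrs_w_k}, and the uniform bound \eqref{fat_girl}), but the decomposition is organised differently. The paper splits the operator as $D_x=\chi_k(D)D_x+(1-\chi_k)(D)D_x$ with the cutoff $\chi_k$ from \eqref{eq_cutoff_functions_to_split_the_supports}: the first piece is estimated directly by Calder\'on--Vaillancourt, since $\chi_k(\xi)\xi$ has $S^{0}_{0,0}$-seminorms of size $\sigma_k$, while for the second piece one composes $(1-\chi_k(\xi))\xi$ with $w^{(\alpha\beta)}_k$, where the disjointness of the supports annihilates every term of the asymptotic expansion so that only the remainder survives, and $|\partial_\xi^N\{(1-\chi_k(\xi))\xi\}|\cdot|\partial_x^N w^{(\alpha\beta)}_k|$ together with the $1/N!$ produces exactly $N!^{2\theta_h-1}\sigma_k^{1-2N}$. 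You instead commute $D_x$ through $w^{(\alpha\beta)}_k$ exactly and factor $\xi\,w^{(\alpha\beta)}_k=\sigma_k\,\zeta_k\circ w^{(\alpha\beta)}_k$ modulo the $\gamma=1$ term and a remainder, relying on the exact cancellation of the $\sigma_k^{-1}v^{((\alpha+1)\beta)}_k$ contributions; the bookkeeping of Gevrey factors and powers of $\sigma_k$ then comes out the same. The paper's route is slightly leaner (no auxiliary multiplier $\zeta_k$, no cancellation to verify), yours has the merit of exhibiting the main term as $\sigma_k\zeta_k(D)v^{(\alpha\beta)}_k$ with a uniformly bounded multiplier. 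One small caveat: your cancellation needs the expansion truncated after the $\gamma=1$ term, so as written the argument covers $N\ge 2$; the case $N=1$ of the statement then follows trivially from the case $N=2$ (for $\sigma_k\ge 1$ and after enlarging $C$), or by leaving the $\gamma=1$ contribution inside the remainder, but this should be said explicitly.
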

	\begin{proof}
		We decompose $D_x$ as
		$$
		D_x = \chi_k(D)D_x + (1-\chi_k)(D) D_x.
		$$
		Then, since $\xi$ is comparable with $\sigma_k$ on the support of $\chi_k$, Calder\'on-Vaillancourt implies
		$$
		\|\chi_k(D)D_x v^{(\alpha\beta)}_{k}\|_{L^{2}(\R)} \leq C \sigma_k \|v^{(\alpha\beta)}_{k}\|_{L^{2}(\R)}.
		$$
		On the other hand, since the supports of $w^{(\alpha\beta)}_{k}$ and $1-\chi_k$ are disjoint, we may write 
		\begin{align*}
			(1-\chi_k)(\xi) \xi \circ w^{(\alpha\beta)}_{k}(x,\xi) = r^{(\alpha\beta)}_{k,N}(x,\xi),
		\end{align*}
		where 
		\begin{align*}
		|r^{(\alpha\beta)}_{k,N}|^{0}_{\ell_0} &\leq C \frac{N}{N!} |\partial^{N}_{\xi}\{(1-\chi_k(\xi))\xi\}|^{0}_{\ell_1} |\partial^{N}_{x}w^{(\alpha\beta)}_{k}|^{0}_{\ell_1} \\
		&\leq C^{\alpha+\beta+N+1} \{\alpha!\beta!\}^{\theta_h} N!^{2\theta_h-1} \sigma^{1-2N}_{k}.
		\end{align*}
	\end{proof}
	\smallskip
	
	\noindent-$[D_t + D^2_x, w^{(\alpha\beta)}_{k}]u_k$.
	We have
	\begin{align*}
		[D_t + D^2_x, w^{(\alpha\beta)}_{k}] &= 2 D_xw^{(\alpha\beta)}_{k} D_x + D^{2}_{x}w_{k}^{(\alpha\beta)} \\
		&= 2 D_x \circ D_x w^{(\alpha\beta)}_{k} - D^{2}_{x}w^{(\alpha\beta)}_{k} \\
		&= -2i \sigma^{-1}_{k} D_x \circ w^{((\alpha+1)\beta)}_{k} + \sigma^{-2}_{k} w^{((\alpha+2)\beta)}_{k}
	\end{align*}
	Thus, applying Lemma \ref{lemma_useful_estimate} with $N = N_k$,  we get
	\begin{align}\label{estimate_first_bracket_f_k_alpha_beta}
		\|[D_t+D^2_x, w^{(\alpha\beta)}_{k}]u_k\|_{L^{2}(\R)} &\leq C\|v^{((\alpha+1)\beta)}_{k}\|_{L^{2}(\R)} + C\sigma^{-2}_{k}\|v^{((\alpha+2)\beta)}_{k}\|_{L^{2}(\R)} \\
		&+ C^{\alpha+\beta+N_k+1} (\alpha!\beta!)^{\theta_h} N_k!^{2\theta_h-1} \sigma^{1-2N_k}_{k} \nonumber.
	\end{align}
	\smallskip
	
	\noindent-$[i\langle x \rangle^{-\sigma}D_{x}, w^{(\alpha\beta)}_{k}]u_k$.
	We first observe that
	\begin{align*}
		[i\langle x \rangle^{-\sigma}D_{x}, w^{(\alpha\beta)}_{k}] &= i\langle x \rangle^{-\sigma} D_xw^{(\alpha\beta)}_{k} 
		&- \sum_{1\leq \gamma \leq N_k-1} \frac{i}{\gamma!}  D^{\gamma}_{x}\langle x \rangle^{-\sigma} \partial^{\gamma}_{\xi}w^{(\alpha\beta)}_{k}(x,D) D_{x}  + r^{(\alpha\beta)}_{k}(x,D),
	\end{align*}
	where 
	$$
	r^{(\alpha\beta)}_{k}(x,\xi) = -iN_k\int_{0}^{1} \frac{(1-\theta)^{N_k-1}}{N_k!} \, Os - \iint e^{-iy\eta} \partial^{N_k}_{\xi}w^{(\alpha\beta)}_{k}(x,\xi+\theta\eta) D^{N_k}_{x} \langle x+y \rangle^{-\sigma} \xi dy\dslash\eta \, d\theta.
	$$
	To estimate $r^{(\alpha\beta)}_{k}$ we need to use the support properties of $w^{(\alpha\beta)}_{k}$. Then we write 
	$$
	\xi = (\xi + \theta\eta) - \theta\eta
	$$
	in order to use integration by parts to get
	\begin{align*}
		Os-\iint &e^{-iy\eta} \partial^{N_k}_{\xi}w^{(\alpha\beta)}_{k}(x,\xi+\theta\eta) D^{N_k}_{x} \langle x+y \rangle^{-\sigma} \xi dy\dslash\eta \\
		&= Os-\iint e^{-iy\eta} (\xi+\theta\eta)\partial^{N_k}_{\xi}w^{(\alpha\beta)}_{k}(x,\xi+\theta\eta) D^{N_k}_{x} \langle x+y \rangle^{-\sigma} \xi dy\dslash\eta \\
		&-\theta\, Os-\iint e^{-iy\eta} \partial^{N_k}_{\xi}w^{(\alpha\beta)}_{k}(x,\xi+\theta\eta) D^{N_k+1}_{x} \langle x+y \rangle^{-\sigma} \xi dy\dslash\eta .
	\end{align*}
	Hence we may estimate the seminorms of $r^{(\alpha\beta)}_{k}$ in the following way
	\begin{align*}
		|r^{(\alpha\beta)}_{k}|^{0}_{\ell_0} &\leq \frac{C^{N_k+1}}{N_k!} \left\{|\xi\partial^{N_k}_{\xi}w^{(\alpha\beta)}_{k}|^{(0)}_{\ell_1} |D^{N_k}_{x}\langle x \rangle^{-\sigma}|^{(0)}_{\ell_1} + |\partial^{N_k}_{\xi}w^{(\alpha\beta)}_{k}|^{(0)}_{\ell_1} |D^{N_k+1}_{x}\langle x \rangle^{-\sigma}|^{(0)}_{\ell_1} \right\} \\
		&\leq C^{\alpha+\beta+N_k+1} \{\alpha!\beta!\}^{\theta_h} N_k!^{2\theta_h-1} \sigma^{1-N_k}_{k},
	\end{align*}
	which allows us to conclude (using Calder\'on-Vaillancourt and \eqref{fat_girl})
	\begin{equation}\label{eq_estimate_main_remainder_f_k}
		\|r^{(\alpha\beta)}_{k,N}(x,D)u_{k}\|_{L^{2}} \leq C^{\alpha+\beta+N_k+1}\{\alpha!\beta!\}^{\theta_h} N_k!^{2\theta_h-1}\sigma^{1-N_k}_{k}.
	\end{equation}
	
	Now we consider the remaining terms of the commutator. We have 
	\begin{align*}
		\sum_{\gamma=1}^{N_k-1}& \frac{1}{\gamma!} D^{\gamma}_{x}\langle x \rangle^{-\sigma} \partial^{\gamma}_{\xi}w^{(\alpha\beta)}_{k}(x,D) D_{x} \\
		&= \sum_{\gamma = 1}^{N_k-1} \frac{1}{\gamma!} D^{\gamma}_{x}\langle x \rangle^{-\sigma} D_x \circ \partial^{\gamma}_{\xi} w^{(\alpha\beta)}_{k}(x,D)
		-  \sum_{\gamma = 1}^{N_k-1} \frac{1}{\gamma!} D^{\gamma}_{x}\langle x \rangle^{-\sigma} (D_x \partial^{\gamma}_{\xi}w)^{(\alpha\beta)}_{k}(x,D) \\
		&= \sum_{\gamma = 1}^{N_k-1} \frac{1}{\gamma!} 4^{\gamma} \sigma^{-\gamma}_{k} D^{\gamma}_{x}\langle x \rangle^{-\sigma} D_x \circ w^{(\alpha(\beta+\gamma))}_{k}(x,D) 
		+ i\sum_{\gamma = 1}^{N_k-1} \frac{1}{\gamma!} 4^{\gamma} \sigma^{-1-\gamma}_{k} D^{\gamma}_{x}\langle x \rangle^{-\sigma} w^{((\alpha+1)(\beta+\gamma))}_{k}(x,D)
	\end{align*}
	Using the support of $D_{x} v^{(\alpha(\beta+\gamma))}_{k}$ and that $|D^{\gamma}_{x} \langle x \rangle^{-\sigma}| \leq C^{\gamma+1} \gamma! \langle x \rangle^{-\sigma-\gamma}$ we get
	\begin{align*}
		\| D^{\gamma}_{x} \langle x \rangle^{-\sigma} D_{x} v^{(\alpha(\beta+\gamma))}_{k} \|_{L^{2}(\R)} \leq 
		C^{\gamma+1} \gamma! \sigma^{-(\sigma+\gamma)}_{k} \|D_{x}v^{(\alpha(\beta+\gamma))}_{k}\|_{L^{2}(\R)}.
	\end{align*}
	Then, applying Lemma \ref{lemma_useful_estimate} with $N = N_k - \gamma$ we obtain 
	\begin{align*}
		\| D^{\gamma}_{x} \langle x \rangle^{-\sigma} D_{x} &v^{(\alpha(\beta+\gamma))}_{k} \|_{L^{2}(\R)} \leq C^{\gamma+1} \gamma! \sigma^{1-(\sigma+\gamma)}_{k} \|v^{(\alpha(\beta+\gamma))}_{k}\|_{L^{2}(\R)} \\
		&+ C^{\gamma+1} \gamma! \sigma^{-(\sigma+\gamma)}_{k} C^{\alpha+\beta+N_k} \{\alpha!(\beta+\gamma)!\}^{\theta_h} (N_k-\gamma)!^{2\theta_h-1} \sigma^{1-2(N_k-\gamma)}_{k}. \\
	\end{align*}
	On the other hand, using the support of $v^{((\alpha+1)(\beta+\gamma))}_{k}$ we infer
	\begin{align*}
		\| D^{\gamma}_{x}\langle x \rangle^{-\sigma} v^{((\alpha+1)(\beta+\gamma))}_{k}\|_{L^{2}(\R)} \leq 
		C^{\gamma+1} \gamma! \sigma^{-(\sigma+\gamma)}_{k} \|v^{((\alpha+1)(\beta+\gamma))}_{k}\|_{L^{2}(\R)}.
	\end{align*}
	Hence
	\begin{align}\label{eq_estimate_shift_gamma_f_k}
		\Bigg\|\sum_{\gamma=1}^{N_k-1}& \frac{1}{\gamma!} D^{\gamma}_{x}\langle x \rangle^{-\sigma} \partial^{\gamma}_{\xi}w^{(\alpha\beta)}_{k}(x,D) D_{x} u_k\Bigg\|_{L^{2}(\R)} \leq 
		C\sum_{\gamma=1}^{N_k-1} C^{\gamma} \sigma^{1-\sigma-2\gamma}_{k} \|v^{(\alpha(\beta+\gamma))}_{k}\|_{L^{2}(\R)} \\ 
		&+ C\sum_{\gamma=1}^{N_k-1} C^{\gamma} \sigma^{-(1+\sigma+2\gamma)}_{k} \|v^{((\alpha+1)(\beta+\gamma))}_{k}\|_{L^{2}(\R)} \nonumber + C^{\alpha+\beta+N_k+1} \{\alpha!\beta!\}^{\theta_h} N_k!^{2\theta_h-1}\sigma_k^{1-\sigma-2N_k}.
	\end{align}
	
	For the last term we have
	\begin{align}\label{eq_last_estimate_f_k_alpha_beta}
			\|\langle x \rangle^{-\sigma}D_x v^{((\alpha+1)\beta)}_{k}\|_{L^{2}(\R)} \leq C \sigma^{-(1+\sigma)}_{k} \|v^{((\alpha+1)\beta)}_{k}\|_{L^{2}(\R)}.
	\end{align}
	
	From \eqref{estimate_first_bracket_f_k_alpha_beta}, \eqref{eq_estimate_main_remainder_f_k}, \eqref{eq_estimate_shift_gamma_f_k} and \eqref{eq_last_estimate_f_k_alpha_beta} we obtain the following lemma.
	
	\begin{lemma}\label{lemma_estimate_f_k_alpha_beta}
		Let the Cauchy problem \eqref{cauchy_problem_Model} be $\mathcal{S}^{\theta}_{s}$ well-posed.  We then have
		\begin{align*}
			\|f^{(\alpha\beta)}_{k}\|_{L^{2}(\R)} &\leq C\|v^{((\alpha+1)\beta)}_{k}\|_{L^{2}(\R)} + C\sigma^{-2}_{k}\|v^{((\alpha+2)\beta)}_{k}\|_{L^{2}(\R)} \\ \nonumber
			&+C\sum_{\gamma=1}^{N_k-1} C^{\gamma} \sigma^{1-\sigma-2\gamma}_{k} \|v^{(\alpha(\beta+\gamma))}_{k}\|_{L^{2}(\R)} + C\sum_{\gamma=1}^{N_k-1} C^{\gamma} \sigma^{-(1+\sigma+2\gamma)}_{k} \|v^{((\alpha+1)(\beta+\gamma))}_{k}\|_{L^{2}(\R)}\\ \nonumber
			&+ C^{\alpha+\beta+N_k+1}\{\alpha!\beta!\}^{\theta_h} N_k!^{2\theta_h-1}\sigma^{1-N_k}_{k}. \\
		\end{align*}	 
	\end{lemma}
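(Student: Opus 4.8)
The plan is to read off the claimed bound by inserting, into the decomposition $f^{(\alpha\beta)}_{k}=[S,w^{(\alpha\beta)}_{k}]u_k$, the four estimates for its pieces that have just been established, and then tidying up the powers of $\sigma_k$. First I would note that, since the symbol $w^{(\alpha\beta)}_{k}$ does not depend on $t$, the operator $D_t$ commutes with $w^{(\alpha\beta)}_{k}(x,D)$, so that
\[
f^{(\alpha\beta)}_{k}=[D^{2}_x,w^{(\alpha\beta)}_{k}]u_k+[i\langle x\rangle^{-\sigma}D_x,w^{(\alpha\beta)}_{k}]u_k .
\]
The first term is precisely what is controlled by \eqref{estimate_first_bracket_f_k_alpha_beta}: expand the commutator as $2D_xw^{(\alpha\beta)}_{k}D_x+D^{2}_xw^{(\alpha\beta)}_{k}$, use the composition calculus to move one $D_x$ past the symbol so that each derivative falling on $w^{(\alpha\beta)}_{k}$ produces a factor $\sigma_k^{-1}$ together with a shift of the $h$-index (the structural identity $\partial_xw^{(\alpha\beta)}_{k}=\sigma_k^{-1}w^{((\alpha+1)\beta)}_{k}$), and then apply Lemma \ref{lemma_useful_estimate} with $N=N_k$ to absorb the surviving $D_xv^{(\alpha\beta)}_{k}$. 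This contributes $C\|v^{((\alpha+1)\beta)}_{k}\|_{L^2}+C\sigma_k^{-2}\|v^{((\alpha+2)\beta)}_{k}\|_{L^2}$ together with a fully remainder term of size $C^{\alpha+\beta+N_k+1}\{\alpha!\beta!\}^{\theta_h}N_k!^{2\theta_h-1}\sigma_k^{1-2N_k}$.

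For the second commutator I would use the asymptotic expansion \eqref{eq_asymptotic_expansion_formula} of the composed symbols, cut at order $N=N_k$, which splits $[i\langle x\rangle^{-\sigma}D_x,w^{(\alpha\beta)}_{k}]$ into three pieces. The principal piece is of the form $i\langle x\rangle^{-\sigma}(D_xw^{(\alpha\beta)}_{k})(x,D)$; differentiating the cutoff once costs $\sigma_k^{-1}$ and shifts $\alpha\mapsto\alpha+1$, while on $\supp w^{((\alpha+1)\beta)}_{k}$ one has $\langle x\rangle^{-\sigma}\le C\sigma_k^{-\sigma}$, so this piece is bounded by $C\sigma_k^{-(1+\sigma)}\|v^{((\alpha+1)\beta)}_{k}\|_{L^2}$, which is \eqref{eq_last_estimate_f_k_alpha_beta}. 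The intermediate piece is the finite sum over $1\le\gamma\le N_k-1$; in each summand one pulls out $D^{\gamma}_x\langle x\rangle^{-\sigma}$ (a function of $x$ alone, bounded by $C^{\gamma+1}\gamma!\langle x\rangle^{-\sigma-\gamma}$), decomposes $D_x\circ w^{(\alpha(\beta+\gamma))}_{k}(x,D)$ through the composition product, and applies Lemma \ref{lemma_useful_estimate} with $N=N_k-\gamma$; this produces the two sums appearing in \eqref{eq_estimate_shift_gamma_f_k}, plus a further remainder term. The last piece is the integral remainder $r^{(\alpha\beta)}_{k}(x,D)u_k$, and here the key device (already carried out above) is to write $\xi=(\xi+\theta\eta)-\theta\eta$ inside the oscillatory integral, so that the stray factor $\xi$ is either absorbed by $\partial^{N_k}_\xi w^{(\alpha\beta)}_{k}$, which lives where $\xi\simeq\sigma_k$, or converted into one additional $x$-derivative on $\langle x+y\rangle^{-\sigma}$; together with Theorem \ref{theorem_Calderon_Vaillancourt} and \eqref{fat_girl} this gives \eqref{eq_estimate_main_remainder_f_k}.

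It then remains to add \eqref{estimate_first_bracket_f_k_alpha_beta}, \eqref{eq_last_estimate_f_k_alpha_beta}, \eqref{eq_estimate_shift_gamma_f_k} and \eqref{eq_estimate_main_remainder_f_k} by the triangle inequality and to note that, since $\sigma_k\to\infty$, one has $\sigma_k^{1-2N_k}\le\sigma_k^{1-N_k}$ and $\sigma_k^{1-\sigma-2N_k}\le\sigma_k^{1-N_k}$ for $k$ large, so that all of the ``fully remainder'' contributions coalesce into the single term $C^{\alpha+\beta+N_k+1}\{\alpha!\beta!\}^{\theta_h}N_k!^{2\theta_h-1}\sigma_k^{1-N_k}$; reading off the resulting bound yields the statement. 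I do not expect a genuine obstacle here: the whole difficulty of the argument has already been spent on the two inputs---Lemma \ref{lemma_useful_estimate}, whose proof exploits that $\supp w^{(\alpha\beta)}_{k}$ is disjoint from $\supp(1-\chi_k)$ so that discarding the high frequencies of $D_x$ costs only finitely many powers of $\sigma_k^{-1}$, and the remainder estimate obtained through the $\xi=(\xi+\theta\eta)-\theta\eta$ trick---and the present lemma is just their bookkeeping. The only care required is to keep the index shifts $(\alpha+1,\beta)$, $(\alpha+2,\beta)$, $(\alpha,\beta+\gamma)$, $(\alpha+1,\beta+\gamma)$ and the accompanying powers of $\sigma_k$ matched with the term each of them comes from.
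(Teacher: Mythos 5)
Your proposal is correct and is essentially the paper's own argument: split $f^{(\alpha\beta)}_{k}=[D_t+D^{2}_x,w^{(\alpha\beta)}_{k}]u_k+[i\langle x\rangle^{-\sigma}D_x,w^{(\alpha\beta)}_{k}]u_k$, derive the four estimates \eqref{estimate_first_bracket_f_k_alpha_beta}, \eqref{eq_estimate_main_remainder_f_k}, \eqref{eq_estimate_shift_gamma_f_k}, \eqref{eq_last_estimate_f_k_alpha_beta} exactly as in the text (Lemma \ref{lemma_useful_estimate} with $N=N_k$ resp.\ $N=N_k-\gamma$, the expansion \eqref{eq_asymptotic_expansion_formula}, and the $\xi=(\xi+\theta\eta)-\theta\eta$ trick for the remainder), then add them and absorb all remainders into the single $\sigma_k^{1-N_k}$ term since $\sigma_k\to\infty$. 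The only blemish is the misprint ``$D_xv^{(\alpha\beta)}_{k}$'' where the surviving term is $\sigma_k^{-1}D_xv^{((\alpha+1)\beta)}_{k}$, which does not affect the bound you state.
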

	
	We are finally ready to prove Proposition \ref{propostion_lower_bound_derivative_energy}.
	
	\begin{proof}
	From \eqref{eq_energy_method} and Lemmas \ref{lemma_estimate_that_gives_contradiction} and \ref{lemma_estimate_f_k_alpha_beta} we have
	\begin{align*}
		\partial_t \|v^{(\alpha\beta)}_{k}\|_{L^{2}} &\geq c_1 \sigma^{1-\sigma}_{k} \|v^{(\alpha\beta)}_{k}\|_{L^{2}(\R)} 
		-C\|v^{((\alpha+1)\beta)}_{k}\|_{L^{2}(\R)} - C\sigma^{-2}_{k}\|v^{((\alpha+2)\beta)}_{k}\|_{L^{2}(\R)} \\
		&- C\sum_{\gamma=1}^{N_k-1} C^{\gamma} \sigma^{1-\sigma-2\gamma}_{k} \|v^{(\alpha(\beta+\gamma))}_{k}\|_{L^{2}(\R)} - C\sum_{\gamma=1}^{N_k-1} C^{\gamma} \sigma^{-(1+\sigma+2\gamma)}_{k} \|v^{((\alpha+1)(\beta+\gamma))}_{k}\|_{L^{2}(\R)} \\
		&- C^{\alpha+\beta+N_k+1}\{\alpha!\beta!\}^{\theta_h} N_k!^{2\theta_h-1}\sigma^{1-N_k}_{k}.
	\end{align*}
	Therefore
	\begin{align*}
		\partial_t &E_{k}(t) = \sum_{\alpha \leq N_k, \beta \leq N_k} \frac{1}{(\alpha!\beta!)^{\theta_1}} \partial_t \| v^{(\alpha\beta)}_{k} (t,x) \|_{L^{2}}  \\
		&\geq \sum_{\alpha \leq N+k, \beta \leq N_k} \frac{1}{(\alpha!\beta!)^{\theta_1}} c_1 \sigma^{1-\sigma}_{k} \|v^{(\alpha\beta)}_{k}\|_{L^{2}} \\
		&-C \sum_{\alpha \leq N_k, \beta \leq N_k} \frac{1}{(\alpha!\beta!)^{\theta_1}}\|v^{((\alpha+1)\beta)}_{k}\|_{L^{2}}
		-C \sigma^{-2}_{k} \sum_{\alpha \leq N_k, \beta \leq N_k} \frac{1}{(\alpha!\beta!)^{\theta_1}}  \|v^{((\alpha+2)\beta)}_{k}\|_{L^{2}} \\
		&-C \sum_{\alpha \leq N_k, \beta \leq N_k} \frac{1}{(\alpha!\beta!)^{\theta_1}} \left[\sum_{\gamma=1}^{N_k-1} C^{\gamma} \sigma^{1-\sigma-2\gamma}_{k} \|v^{(\alpha(\beta+\gamma))}_{k}\|_{L^{2}} + \sum_{\gamma=1}^{N_k-1} C^{\gamma} \sigma^{-(1+\sigma+2\gamma)}_{k} \|v^{((\alpha+1)(\beta+\gamma))}_{k}\|_{L^{2}} \right]\\
		&-C \sum_{\alpha \leq N_k, \beta \leq N_k} \frac{1}{(\alpha!\beta!)^{\theta_1}} C^{\alpha+\beta+N_k} (\alpha!\beta!)^{\theta_h} N_k!^{2\theta_h-1} \sigma^{1-N_k}_{k}.
	\end{align*}
	
	Now we discuss how to treat the terms appearing in the above summation. For the first one we simply have
	\begin{align}\label{eq_estimate_first_term}
		\sum_{\alpha \leq N_k, \beta \leq N_k} \frac{1}{(\alpha!\beta!)^{\theta_1}} c_1 \sigma^{1-\sigma}_{k} \|v^{(\alpha\beta)}_{k}\|_{L^{2}(\R)} = c_1 \sigma^{1-\sigma}_{k} E_k(t).
	\end{align}
	For the second one we proceed as follows
	\begin{align*}
		C \sum_{\alpha \leq N_k, \beta \leq N_k} \frac{1}{(\alpha!\beta!)^{\theta_1}}   \|v^{((\alpha+1)\beta)}_{k}\|_{L^{2}(\R)} &= 
		C \sum_{\alpha \leq N_k, \beta \leq N_k} (\alpha+1)^{\theta_1} E_{k,\alpha+1,\beta} \\
		&\leq C N^{\theta_1}_{k} \Bigg\{ E_{k} + \sum_{\beta \leq N_k} E_{k,N_k+1,\beta} \Bigg\}.
	\end{align*}
	From \eqref{eq_estimate_from_above_E_k_alpha_beta} we conclude 
	$$
	E_{k,\alpha+1,\beta} \leq C^{\alpha+\beta+1} \{(\alpha+1)!\beta!\}^{\theta_h-\theta_1},
	$$
	so we obtain
	\begin{align*}
		C \sum_{\alpha \leq N_k, \beta \leq N_k} \frac{1}{(\alpha!\beta!)^{\theta_1}}   \|v^{((\alpha+1)\beta)}_{k}\|_{L^{2}(\R)} &\leq  
		CN^{\theta_1} \Bigg\{E_{k} + C^{N_k} N_k!^{\theta_h-\theta_1} \sum_{\beta < \infty} C^{\beta} \beta!^{\theta_h-\theta_1} \Bigg\}.
	\end{align*}
	Recalling that $N_k := \lfloor \sigma_{k}^{\frac{\lambda}{\theta_1}} \rfloor$, the inequality $N_k^{N_k} \leq e^{N_k} N_k!$ implies
	\begin{align*}
		C \sum_{\alpha \leq N_k, \beta \leq N_k} \frac{1}{(\alpha!\beta!)^{\theta_1}}   \|v^{((\alpha+1)\beta)}_{k}\|_{L^{2}(\R)} &\leq 
		C \sigma^{\lambda}_{k}E_{k} + C^{N_k+1} \sigma_{k}^{\lambda + \frac{\lambda(\theta_h-\theta_1)}{\theta_1} N_k} .
	\end{align*}
	Hence 
	\begin{align}\label{eq_estimate_second_term}
		C \sum_{\alpha \leq N_k, \beta \leq N_k} \frac{1}{(\alpha!\beta!)^{\theta_1}}   \|v^{((\alpha+1)\beta)}_{k}\|_{L^{2}(\R)} &\leq 
		C \sigma^{\lambda}_{k} E_{k} + C^{N_k+1} \sigma^{C-cN_k}_{k}.
	\end{align}
	
	Analogously
	\begin{align}\label{eq_estimate_third_term}
		C \sigma^{-2}_{k} \sum_{\alpha \leq N_k, \beta \leq N_k} \frac{1}{(\alpha!\beta!)^{\theta_1}}  \|v^{((\alpha+2)\beta)}_{k}\|_{L^{2}} 
		\leq C \sigma^{2(\lambda-1)}_{k} E_k(t) + C^{N_k+1} \sigma^{C-cN_k}_{k}.
	\end{align}
	
	For the next term we first note
	\begin{align*}
		\sum_{\alpha \leq N_k, \beta \leq N_k} \frac{1}{(\alpha!\beta!)^{\theta_1}} \sum_{\gamma=1}^{N_k-1} &C^{\gamma} \sigma^{1-\sigma-2\gamma}_{k} \|v^{(\alpha(\beta+\gamma))}_{k}\|_{L^{2}}
		\\ 
		&= \sum_{\overset{\alpha,\beta \leq N_k}{1 \leq \gamma \leq N_k-1}} \left\{ \sum_{\beta+\gamma \leq N_k} + \sum_{\beta+\gamma > N_k} \right\}
		C^{\gamma} \sigma^{1-\sigma-2\gamma}_{k} \frac{(\beta+\gamma)!^{\theta_1}}{\beta!^{\theta_1}} E_{k,\alpha,\beta+\gamma}.
	\end{align*}	
	Now, since
	$$
	\frac{(\beta+\gamma)!}{\beta!} \leq (\beta+\gamma)^{\gamma} \leq N^{\gamma}_{k} \leq (\sigma^{\frac{\lambda}{\theta_1}}_{k})^{\gamma}, \quad \text{provided that}\, \beta+\gamma \leq N_k,
	$$
	$\lambda \in (0,1)$ and $\gamma \geq 1$, for $k$ large so that $C\sigma^{\lambda-1}_{k} < 1$ we get
	\begin{align*}
		\sum_{\overset{\alpha,\beta \leq N_k}{1 \leq \gamma \leq N_k-1}} \sum_{\beta+\gamma \leq N_k} C^{\gamma} \sigma^{1-\sigma-2\gamma}_{k} \frac{(\beta+\gamma)!^{\theta_1}}{\beta!^{\theta_1}} E_{k,\alpha,\beta+\gamma} \leq
		\sum_{\overset{\alpha,\beta \leq N_k}{1 \leq \gamma \leq N_k-1}} \sum_{\beta+\gamma \leq N_k} 
		(\sigma^{\lambda-1}_{k}C)^{\gamma} \sigma^{1-\sigma-\gamma}_{k} E_{k,\alpha,\beta+\gamma} \leq E_k.
	\end{align*}
	In the situation where $(\beta+\gamma) > N_k$ we use \eqref{eq_estimate_from_above_E_k_alpha_beta} to conclude
	\begin{align*}
		 C^{\gamma} \sigma^{1-\sigma-2\gamma}_{k} \frac{(\beta+\gamma)!^{\theta_1}}{\beta!^{\theta_1}} E_{k,\alpha,\beta+\gamma} &\leq 
		C^{\gamma} \sigma^{1-\sigma-2\gamma}_{k} \frac{(\beta+\gamma)!^{\theta_1}}{\beta!^{\theta_1}}
		C^{\alpha+\beta+\gamma+1} \alpha!^{\theta_h-\theta_1} (\beta+\gamma)!^{\theta_h-\theta_1}  \\
		&\leq C^{N_k+1} N_k!^{\theta_h-\theta_1} C^{\gamma} \sigma^{1-\sigma-2\gamma -\lambda\gamma}_{k} C^{\alpha}\alpha!^{\theta_h-\theta_1}.
	\end{align*}
	Hence we have
	\begin{align}\label{eq_estimate_fourth_term}
		\sum_{\alpha \leq N_k, \beta \leq N_k} \frac{1}{(\alpha!\beta!)^{\theta_1}} \sum_{\gamma=1}^{N_k-1} &C^{\gamma} \sigma^{1-\sigma-2\gamma}_{k} \|v^{(\alpha(\beta+\gamma))}_{k}\|_{L^{2}}
		\leq E_{k} + C^{N_k+1} \sigma^{C-cN_k}_{k}.
	\end{align}
	
	Analogously
	\begin{align}\label{eq_estimate_fiveth_term}
		\sum_{\alpha \leq N_k, \beta \leq N_k} \frac{1}{(\alpha!\beta!)^{\theta_1}} \sum_{\gamma=1}^{N_k-1} C^{\gamma} \sigma^{-(1+\sigma+2\gamma)}_{k} \|v^{((\alpha+1)(\beta+\gamma))}_{k}\|_{L^{2}}
		\leq E_{k} + C^{N_k+1} \sigma^{C-cN_k}_{k}.
	\end{align}
	
	For the last term, using the definition of $N_k$ and that $\theta_1 > \theta_h$ we easily conclude 
	\begin{align}\label{eq_estimate_sixth_term}
		\sum_{\alpha \leq N_k, \beta \leq N_k} \frac{1}{(\alpha!\beta!)^{\theta_1}} C^{\alpha+\beta+N_k+1} (\alpha!\beta!)^{\theta_h} N_k!^{2\theta_h-1} \sigma^{1-N_k}_{k} 
		\leq C^{N_k+1} \sigma^{C-cN_k}_{k}.
	\end{align}
	
	Finally, associating \eqref{eq_estimate_first_term},\eqref{eq_estimate_second_term},\eqref{eq_estimate_third_term}, \eqref{eq_estimate_fourth_term}, \eqref{eq_estimate_fiveth_term} and \eqref{eq_estimate_sixth_term} and using that $\sigma_k \to \infty$ we close the proof of Proposition \ref{propostion_lower_bound_derivative_energy}.
	
	\end{proof}

	\section{Proof of Proposition \ref{main_prop_2}}\label{section_proof_prop_2}
	
	\begin{proof}
		For a fixed $t > 0$ consider the function 
		$$
		h(\xi) = e^{-it\xi^2}, \quad \xi \in \R.
		$$
		We shall prove that $h(\xi)$ does not define a multiplier in the space $\mathcal{S}^{s}_{\theta}(\R)$ when $1\leq s < \theta$.  
		
		Notice that we have the following formula for the derivatives of $h$ (cf. \cite{wahlberg2022semigroups} Eq. $6.3$):
		\begin{equation}\label{eq_expression_derivatives_of_g}
			\partial^{\alpha}_{\xi} h(\xi) = h(\xi) \, \underbrace{(-2it\xi)^{\alpha}\, \sum_{m = 0}^{\lfloor \frac{\alpha}{2} \rfloor} \frac{1}{(-4it)^{m}} \, \frac{\alpha!}{m!(\alpha-2m)!} \, \xi^{-2m}}_{=:P_{\alpha}(\xi)}.
		\end{equation}
		
		Next we consider the function 
		$$
		f(\xi) = e^{-\langle \xi \rangle^{\frac{1}{\theta}}}, \quad \xi \in \R.
		$$
		Of course we have $f \in \mathcal{S}^{1}_{\theta}(\R) (\subset \mathcal{S}^{s}_{\theta}(\R))$ and Fa\`a di Bruno formula implies
		$$
		|\partial^{\alpha}_{\xi} f(\xi)| \leq B^{\alpha} \alpha! f(\xi), \quad \xi \in \R, \alpha \in \N_0,
		$$
		for some $B > 0$.
		
		Suppose by contradiction that $h$ defines a multiplier in $\mathcal{S}^{s}_{\theta}(\R)$. Then, enlarging $B > 0$ if necessary,
		\begin{equation}\label{eq_estimate_assuming_g_multiplier}
			|\partial^{\alpha}_{\xi} \{h(\xi) f(\xi)\}| \leq A B^{\alpha} \alpha!^{s} e^{-a\langle \xi \rangle^{\frac{1}{\theta}}}, \quad \xi \in \R,\, \alpha \in \N_0,
		\end{equation}
		for some $A, a > 0$. Now, we shall prove by induction on $\alpha \in \N_0$ that
		\begin{equation}\label{eq_auxilliary_estimate}
			|\{\partial^{\alpha}_{\xi}h(\xi)\} f(\xi)| \leq A (2B)^{\alpha} \alpha!^{s} e^{-a\langle \xi \rangle^{\frac{1}{\theta}}}, \quad \xi \in \R,\, \alpha \in \N_0.
		\end{equation}
		The case $\alpha = 0$ follows from \eqref{eq_estimate_assuming_g_multiplier}. To prove for a general $\alpha \geq 1$ we first write
		$$
		\{\partial^{\alpha}_{\xi} h(\xi)\} f(\xi) = \partial^{\alpha}_{\xi}\{h(\xi) f(\xi)\} - \sum_{ \overset{\alpha' +\alpha'' = \alpha}{\alpha'' \geq 1}} \frac{\alpha!}{\alpha'!\alpha''!}\partial^{\alpha'}_{\xi}h(\xi) \partial^{\alpha''}_{\xi} f(\xi)
		$$
		and then we use the induction hypoyhesis to get ($\alpha' < \alpha$)
		\begin{align*}
			|\{\partial^{\alpha}_{\xi} h(\xi)\} f(\xi)| &\leq AB^{\alpha}\alpha!^{s}e^{-a\langle \xi \rangle^{\frac{1}{\theta}}} 
			+ \sum_{ \overset{\alpha' +\alpha'' = \alpha}{\alpha'' \geq 1}} \frac{\alpha!}{\alpha'!\alpha''!} |\{\partial^{\alpha'}_{\xi}h(\xi)\}f(\xi)| B^{\alpha''} \alpha''! \\
			&\leq AB^{\alpha}\alpha!^{s}e^{-a\langle \xi \rangle^{\frac{1}{\theta}}} 
			+ \sum_{ \overset{\alpha' +\alpha'' = \alpha}{\alpha'' \geq 1}} \frac{\alpha!}{\alpha'!\alpha''!} A (2B)^{\alpha'} \alpha'!^{s} e^{-a\langle \xi \rangle^{\frac{1}{\theta}}} B^{\alpha''} \alpha''! \\
			&= AB^{\alpha}\alpha!^{s}e^{-a\langle \xi \rangle^{\frac{1}{\theta}}} \left( 1 +  \sum_{ \overset{\alpha' +\alpha'' = \alpha}{\alpha'' \geq 1}} \frac{\alpha!}{\alpha'!\alpha''!} \frac{2^{\alpha'}\alpha'!^{s}\alpha''!}{\alpha!^{s}} \right).
		\end{align*}
		Since $s \geq 1$ we obtain
		$$
		1 +  \sum_{ \overset{\alpha' +\alpha'' = \alpha}{\alpha'' \geq 1}} \frac{\alpha!}{\alpha'!\alpha''!} \frac{2^{\alpha'}\alpha'!^{s}\alpha''!}{\alpha!^{s}} 
		\leq 1 + \sum_{ \overset{\alpha' +\alpha'' = \alpha}{\alpha'' \geq 1}}  2^{\alpha'}
		= 1 + \sum_{\alpha' = 0}^{\alpha-1} 2^{\alpha'} = 1 + 2^{\alpha} - 1 = 2^{\alpha},
		$$
		which gives \eqref{eq_auxilliary_estimate}.
		
		In the sequel our idea is to prove that  the inequality \eqref{eq_auxilliary_estimate} does not hold for the sequence $\xi_{\alpha} = t^{-\frac{1}{2}}\alpha^{\theta}$, $\alpha \in \N_0$, provided that $\alpha$ is large enough. First we use \eqref{eq_expression_derivatives_of_g} to write 
		\begin{align*}
		|\{\partial^{\alpha}_{\xi}h\}(t^{-\frac{1}{2}}\alpha^\theta)f(t^{-\frac{1}{2}}\alpha^{\theta})| &= |h(t^{-\frac{1}{2}}\alpha^\theta) P_{\alpha}(t^{-\frac{1}{2}}\alpha^{\theta}) f(t^{-\frac{1}{2}}\alpha^{\theta})| = |P_{\alpha}(t^{-\frac{1}{2}}\alpha^\theta)| e^{-\langle t^{-\frac{1}{2}}\alpha^{\theta} \rangle^{\frac{1}{\theta}}}. 
		\end{align*}
		Since we are interested in $\alpha$ large, there is no harm in assume $t^{-\frac{1}{2}}\alpha^{\theta} > 1$, hence 
		$$
		|\{\partial^{\alpha}_{\xi}h\}(t^{-\frac{1}{2}}\alpha^\theta)f(t^{-\frac{1}{2}}\alpha^{\theta})| \geq \{e^{- (2t^{-1})^{\frac{1}{2\theta}}}\}^{\alpha}|P_{\alpha}(t^{-\frac{1}{2}}\alpha^{\theta})|.
		$$
		The next step is to obtain an estimate from below for $|P_{\alpha}(t^{-\frac{1}{2}}\alpha^{\theta})|$. Let us then study the expression of $P_{\alpha}(t^{-\frac{1}{2}}\alpha^\theta)$:
		\begin{align*}
			P_{\alpha}(t^{-\frac{1}{2}}\alpha^\theta) &= (-2it^{\frac{1}{2}}\alpha^\theta)^{\alpha}\, \sum_{m = 0}^{\lfloor \frac{\alpha}{2} \rfloor} \frac{1}{(-4i)^{m}} \, \frac{\alpha!}{m!(\alpha-2m)!} \, \alpha^{-2\theta m} \\
			&= (-2it^{\frac{1}{2}}\alpha^\theta)^{\alpha} \left\{ \sum_{\overset{m = 0}{m \, \text{even}}}^{\lfloor \frac{\alpha}{2} \rfloor} \underbrace{\frac{1}{(4i)^{m}} \, \frac{\alpha!}{m!(\alpha-2m)!} \, \alpha^{-2\theta m}}_{\in \R}
			+ i \sum_{\overset{m = 0}{m \, \text{odd}}}^{\lfloor \frac{\alpha}{2} \rfloor} \underbrace{\frac{1}{-4^{m}i^{m-1}} \, \frac{\alpha!}{m!(\alpha-2m)!} \, \alpha^{-2\theta m}}_{\in \R} \right\}.
		\end{align*}
		Therefore
		\begin{align*}
			|P_{\alpha}(t^{-\frac{1}{2}}\alpha^\theta)| \geq (2t^{\frac{1}{2}})^{\alpha} \alpha^{\theta\alpha} \left| \sum_{\overset{m = 0}{m \, \text{even}}}^{\lfloor \frac{\alpha}{2} \rfloor} \frac{1}{(4i)^{m}} \, \frac{\alpha!}{m!(\alpha-2m)!} \, \alpha^{-2\theta m} \right|.
		\end{align*}
		Next we claim that 
		\begin{equation}\label{estimate_1}
			\left| \sum_{\overset{m = 0}{m \, \text{even}}}^{\lfloor \frac{\alpha}{2} \rfloor} \frac{1}{(4i)^{m}} \, \frac{\alpha!}{m!(\alpha-2m)!} \, \alpha^{-2\theta m} \right| \geq \frac{3}{4}.
		\end{equation}
		Indeed, first observe that the sequence 
		$$
		a_{m,\alpha} := \frac{\alpha!}{m!(\alpha-2m)!}, \quad m = 0, 1, \ldots, \lfloor \alpha/2 \rfloor,
		$$
		is strictly decreasing: for any $0 \leq m \leq \lfloor \alpha/2 \rfloor - 1$ we have ($\theta > 1$)
		\begin{align*}
			a_{m,\alpha} > a_{m+1, \alpha} &\iff \frac{\alpha!}{m!(\alpha-2m)!} \, \alpha^{-2\theta m} > \frac{\alpha!}{(m+1)!(\alpha-2(m+1))!} \, \alpha^{-2\theta(m+1)} \\
			&\iff \underbrace{(m+1)}_{\geq 1} \underbrace{\frac{\alpha^\theta}{\alpha-2m} \frac{\alpha^\theta}{\alpha-2m-1}}_{>1} > 1.
		\end{align*}
		We also have $a_{0,\alpha} = 1$ and $a_{m,\alpha} \in (0,1]$. Thus we conclude 
		$$
		\frac{1}{4^{m}}a_{m,\alpha} - \frac{1}{4^{m+2}}a_{m+2,\alpha} \geq \left(\frac{1}{4^{m}} - \frac{1}{4^{m+2}}\right) a_{m,\alpha} \geq 0.
		$$
		Hence
		$$
		\sum_{\overset{m = 0}{m \, \text{even}}}^{\lfloor \frac{\alpha}{2} \rfloor} \frac{1}{(4i)^{m}} \, \frac{\alpha!}{m!(\alpha-2m)!} \, \alpha^{-2\theta m} 
		= \underbrace{1 - \frac{1}{4^{2}} a_{2,\alpha}}_{\geq \frac{3}{4}} + \underbrace{\frac{1}{4^{4}} a_{4,\alpha} - \frac{1}{4^{6}} a_{6,\alpha} + \cdots}_{\geq 0}.
		$$
		Finally, from \eqref{estimate_1} we get
		\begin{equation}\label{eq_estimate_from_below_P_alpha}
			|P_{\alpha}(t^{-\frac{1}{2}}\alpha^\theta)| \geq \frac{3}{4} \{2t^{\frac{1}{2}}e^{- (2t^{-1})^{\frac{1}{2\theta}}}\}^{\alpha} \alpha^{\theta\alpha}.
		\end{equation}
		
		Associating \eqref{eq_auxilliary_estimate} with \eqref{eq_estimate_from_below_P_alpha} we obtain for all $\alpha$ large
		$$
		\frac{3}{4} \{2t^{\frac{1}{2}}e^{- (2t^{-1})^{\frac{1}{2\theta}}}\}^{\alpha} \alpha^{\theta\alpha} \leq A (2B)^{\alpha} \alpha!^{s} e^{-a \langle t^{-\frac{1}{2}} \alpha^{\theta} \rangle^{\frac{1}{\theta}}},
		$$
		which is a contradiction because we are assuming $1 \leq s < \theta$. In this way $h(\xi)f(\xi)$ cannot belong to $\mathcal{S}^{s}_{\theta}(\R)$. In particular, $h$ does not define a multiplier in $\mathcal{S}^{s}_{\theta}(\R)$, when $1 \leq s < \theta$.
		
	\end{proof}

\end{document}